\documentclass[final,11pt]{article}
\usepackage{amsthm}
\usepackage{cite}
\usepackage{amsmath,amssymb,amsfonts}
\usepackage[dvipsnames,usenames]{color}

\usepackage{hyperref}
\usepackage[bitstream-charter]{mathdesign}
\usepackage[utf8]{inputenc}
\usepackage{graphicx}
\usepackage{tikz}
\usepackage{caption}
\usepackage{subcaption}
\usepackage{enumerate}
\usepackage{geometry}
\usepackage[affil-it]{authblk}
\usepackage{tikz-cd}
\usepackage{titling}

\newtheorem{thm}{Theorem}[section]
\newtheorem{prop}[thm]{Proposition}
\newtheorem{lem}[thm]{Lemma}
\newtheorem{defi}[thm]{Definition}

\newtheorem{cor}[thm]{Corollary}
\newtheorem{claim}[thm]{Claim}
\newtheorem{obs}[thm]{Observation}
\newtheorem{conj}{Conjecture}

\newcommand{\conv}{\ensuremath{{\rm conv}}}

\newcommand{\R}{\ensuremath{\mathbb{R}}}

\renewcommand{\Im}{\ensuremath{{\rm Im}}}

\renewcommand{\H}{\ensuremath{{\mathcal{H}}}}

\newcommand{\conn}{\ensuremath{{\rm conn}}}
\newcommand{\supp}{\ensuremath{{\rm supp}}}
\renewcommand{\int}{\ensuremath{{\rm int}}}
\newcommand{\Inc}{\ensuremath{{\rm Inc}}}
\newcommand{\ch}{\ensuremath{{\rm ch}}}
\newcommand{\low}{\ensuremath{{\rm low}}}

\begin{document}
\thanksmarkseries{arabic}
\title{Edge-Isoperimetric Inequalities in Chamber Graphs of Hyperplane Arrangements}
\author{Tilen Marc\thanks{Electronic address: \texttt{tilen.marc@fmf.uni-lj.si}}}
\affil{Faculty of Mathematics and Physics, Ljubljana, Slovenia \\
    Institute of Mathematics, Physics, and Mechanics, Ljubljana, Slovenia}

\sloppy
\maketitle

\begin{abstract}
    We study edge-isoperimetric inequalities in chamber graphs of affine hyperplane arrangements.
    Our approach is topological: to a set of chambers we associate its thickening in Euclidean space
    and estimate its edge boundary through the induced stratification by intersections of arrangement
    hyperplanes. This yields general lower bounds for a broad class of sets.
    We show that a convex set of chambers of size
    $\sum_{i=0}^d \binom{k}{i}$, with $k\ge d-1$, has edge boundary at least $\sum_{i=0}^{d-1}\binom{k}{i}$, and we
    conjecture that convex sets minimize the edge boundary among all chamber sets of a fixed size.
    We verify this conjecture in dimension $2$. Our main result is a three-dimensional asymptotic
    inequality for arbitrary subsets of chambers: for arrangements in general position, every set
    $S$ occupying at most a fixed proportion of the chambers satisfies
    $|\partial S|=\Omega(|S|^{2/3})$. As a consequence, for an arrangement of $n$ hyperplanes in
    general position in $\mathbb R^3$, the lazy simple random walk on the chamber graph has
    $\varepsilon$-mixing time $O(n^2\log(n/\varepsilon))$.
\end{abstract}

\section{Introduction}
Let $\mathcal{H} = \{H_1,\dots,H_n\}$ be a finite arrangement of affine hyperplanes in
$\mathbb{R}^d$.
The complement
\[
    \mathbb{R}^d \setminus \bigcup_{i=1}^n H_i
\]
decomposes into finitely many open connected components, each of which is a convex polyhedron,
possibly unbounded.
These components are called the \emph{chambers} (or \emph{regions}) of the arrangement.

The \emph{chamber graph} of $\mathcal{H}$ is the graph whose vertices are the chambers, where two
chambers are adjacent if they share a common facet (equivalently, if they are separated by exactly
one hyperplane). In the context of oriented matroids, this graph is known as the \emph{tope graph}.

For a graph $G=(V,E)$ and a subset $S\subseteq V$, the \emph{edge boundary} of $S$ is defined as
\[
    \partial S := \{\, xy\in E \mid x\in S,\ y\in V\setminus S \,\}.
\]
In this paper we study isoperimetric properties of chamber graphs, with the aim of establishing
lower bounds on $|\partial S|$ in terms of $|S|$.

\subsection{Motivation}

The edge-isoperimetric problem is a classical topic in graph theory. For highly structured graphs such as
the hypercube and the grid, sharp inequalities are known; see, for example,
\cite{Harper1964,BollobasLeader1991Grid,BollobasLeader1991Compressions}. Their significance extends
well beyond the size of the boundary. Lower bounds on $|\partial S|$ control the edge-expansion, or
isoperimetric number, of the graph, and these quantities are linked by Cheeger-type inequalities to
conductance and to the bottom of the graph Laplacian spectrum
\cite{Chung1997,LawlerSokal1988}. In particular, isoperimetric inequalities yield lower bounds on the
spectral gap and hence upper bounds on relaxation and mixing times for lazy random walks.

These consequences are algorithmically important. When the vertices of a graph represent feasible
solutions to a combinatorial problem, rapid mixing gives almost-uniform sampling; for self-reducible
families, such sampling can then be converted into approximate counting
\cite{JerrumValiantVazirani1986,JerrumSinclair1989}. This perspective already appears for subsets of
the hypercube defined by linear inequalities: Morris and Sinclair analyzed random walks on truncated
cubes and obtained a rapidly mixing chain for sampling $0$--$1$ knapsack solutions
\cite{MorrisSinclair2004}.

Chamber graphs of hyperplane arrangements provide a natural setting in which to extend this circle of
ideas. They retain substantial geometric structure---half-spaces, convex regions, and the
stratification induced by intersections of hyperplanes---but, unlike cubes and grids, they generally
lack a global product structure. As a result, the usual compression arguments are unavailable, and one
is led to ask whether comparable isoperimetric inequalities continue to hold in this broader geometric
setting. One of the outcomes of the present work is that, in dimension $3$, such an inequality also
yields polynomial mixing bounds for the lazy nearest-neighbor random walk on the chamber graph.

\subsection{Related Work}

The best-developed edge-isoperimetric theory concerns product graphs. The hypercube case goes back to
Harper \cite{Harper1964}, while Bollob\'as and Leader established sharp and near-sharp inequalities for
grids and related compression frameworks \cite{BollobasLeader1991Grid,BollobasLeader1991Compressions}.
These results are a natural benchmark for the present work, but their proofs rely heavily on
coordinate-wise compression and other features that are special to product settings.

Direct isoperimetric inequalities for chamber graphs seem to be largely absent from the literature.
By contrast, several closely related spectral and mixing questions have been studied for Markov chains
arising from hyperplane arrangements. A particularly elegant example is the Bidigare--Hanlon--Rockmore
walk analyzed by Brown and Diaconis \cite{BrownDiaconis1998}: one chooses a face $F$ and moves from a
chamber $C$ to the projected chamber $FC$, namely the nearest chamber having $F$ as a face. This
theory gives stationary distributions, diagonalizability, and general convergence bounds for the
resulting chains. However, the chain is not, in general, the nearest-neighbor random walk on the
chamber graph, since a single step may cross several facets at once.

Finite Coxeter groups provide a different and more graph-theoretic class of examples. If one takes the
standard Coxeter generators, the Cayley graph is exactly the chamber graph of the corresponding
reflection arrangement: chambers are indexed by group elements, and crossing one reflecting hyperplane
corresponds to multiplication by a simple reflection. In this setting Kassabov showed that for every
finite Coxeter group the Laplacian spectral gap with respect to the standard generators can be computed
from the defining representation \cite[Theorem~6.1]{Kassabov2011}; in particular, in type $A_n$ exact
spectral-gap values are obtained \cite[Example~6.2]{Kassabov2011}. Since these chamber graphs are
regular, such spectral information in turn yields bounds on their isoperimetric number through
Cheeger-type inequalities. Thus spectral questions have been studied on chamber graphs in our sense,
even though direct edge-isoperimetric inequalities seem to be missing.

Fulman used the Brown--Diaconis hyperplane-walk viewpoint to study riffle shuffles on finite Coxeter
groups \cite{Fulman2001}, and a different but related line of work studies expansion in basis-exchange
graphs of matroids. Feder and Mihail introduced balanced matroids and proved strong expansion properties
for their exchange graphs \cite{FederMihail1992}, while Anari, Liu, Oveis Gharan, and Vinzant proved
the Mihail--Vazirani conjecture that every matroid basis graph has edge expansion at least $1$
\cite{AnariLiuOveisGharanVinzant2024}. These results concern different graph families, but they further
illustrate the central role of isoperimetry and expansion in combinatorial geometry.

On the oriented matroid side, chamber graphs appear as tope graphs; standard references are the
monograph \cite{OrientedMatroids1999} and the later work on complexes of oriented matroids and their
tope graphs \cite{BandeltChepoiKnauer2018,KnauerMarc2020}. These papers are primarily concerned with
structural, metric, and convexity properties of tope graphs rather than with quantitative lower bounds
for edge boundary.

To the best of our knowledge, general isoperimetric inequalities for chamber graphs of hyperplane
arrangements have so far been available only in much more special situations, such as product-type
graphs. The present paper is a first step toward such inequalities for general hyperplane arrangements
and their tope graphs.

\subsection{Approach and main results}

Our approach is topological. Instead of working directly with the chamber graph, we study the
stratification induced by a set $S$ of chambers and bound $|\partial S|$ through the topological
boundaries of the corresponding strata. This yields a general framework, but the point where the
argument becomes difficult is the passage from topologically simple sets to arbitrary ones.

For topologically simple sets, the stratification bounds are already strong enough to give general
isoperimetric estimates. This applies in particular to convex sets, but the method is not restricted
to convexity alone. In this setting we obtain explicit lower
bounds on the edge boundary; see Proposition~\ref{prop:num_edges} and
Corollary~\ref{cor:convex_bound}. These results suggest that convex sets should be extremal. We therefore conjecture that, among sets
of chambers of a fixed size, convex sets minimize the edge boundary.

The same stratification
approach also leads to general results in low dimensions. In dimension $2$ we prove the conjectured
bound. The main achievement, however, is in dimension $3$: there the method goes beyond the
topologically simple setting and yields a nontrivial general bound for arbitrary subsets. More
precisely, we prove an asymptotic form of the conjecture by showing that
$|\partial S|=\Omega(|S|^{2/3})$ for sets that are bounded away from the full arrangement; see
Theorem~\ref{thm:R3_asymp}.
The same three-dimensional analysis also has a random-walk application: for an arrangement
of $n$ hyperplanes in general position in $\R^3$, it implies that the lazy simple random walk on the
chamber graph has $\varepsilon$--mixing time $O(n^2\log(n/\varepsilon))$; see
Corollary~\ref{cor:R3_mixing}.

\section{Preliminaries and basic lemmas}

Let $\mathcal{H}$ be a hyperplane arrangement in $\mathbb{R}^d$ and let $S$ be a subset of its chambers.
We shall view $S$ both as a set of vertices in the chamber graph and as a subset of $\mathbb{R}^d$
via the following construction.

\begin{defi}\label{def:topological}
    Let $\mathcal{H} = \{H_e \mid e\in[n]\}$ be a hyperplane arrangement in $\mathbb{R}^d$, and let
    $S$ be a subset of its chambers.
    \begin{itemize}
        \item The \emph{regular open hull} (or \emph{thickening}) of $S$ is
              \[
                  T_S := \operatorname{int}\!\left(\overline{\bigcup_{C\in S} C}\right).
              \]
              Equivalently, a point $x\in\mathbb{R}^d$ belongs to $T_S$ if either $x$ lies in a chamber of $S$,
              or all chambers whose closures contain $x$ belong to $S$.

        \item For $e\in[n]$, we say that the hyperplane $H_e$ \emph{intersects} $S$ if
              $T_S\cap H_e\neq\emptyset$.

        \item For a subset $A\subseteq[n]$, define the affine subspace
              \[
                  L_A := \bigcap_{f\in A} H_f ,
              \]
              and the corresponding \emph{stratum}
              \[
                  T_S(A) := T_S \cap L_A .
              \]
              The set
              \[
                  \supp(S) := \{\, A\subseteq[n] \mid T_S(A)\neq\emptyset \,\}
              \]
              is called the \emph{support} of $S$.

        \item For $A\in\supp(S)$, denote by $\conn(T_S(A))$ the set of connected components of $T_S(A)$.
    \end{itemize}
\end{defi}

To analyze the edge boundary $\partial S$, we will relate it to the geometry of the boundary
of $T_S$. For a hyperplane $H_e$, denote by $H_e^+$ and $H_e^-$ the two open half-spaces determined
by $H_e$.

\begin{defi}\label{def:boundary}
    Let $\mathcal{H}$ be a hyperplane arrangement in $\mathbb{R}^d$ and let $S$ be a subset of its chambers.
    \begin{itemize}
        \item Let
              \[
                  \Sigma_S := \partial T_S
              \]
              denote the topological boundary of $T_S$.

        \item For $A\in\supp(S)$ and $e\in[n]$, we say that the hyperplane $H_e$ \emph{bounds}
              the stratum $T_S(A)$ if
              \begin{itemize}
                  \item $T_S(A)$ is contained entirely in one of the half-spaces $H_e^+$ or $H_e^-$, and
                  \item the closure of $T_S(A)$ meets $H_e$ within $L_A$, i.e.,
                        \[
                            H_e \cap \overline{T_S(A)} \cap L_A \neq \emptyset .
                        \]
              \end{itemize}

        \item For $A\in\supp(S)$, let $b(A)$ denote the set of all hyperplanes in $\mathcal{H}$ that
              bound $T_S(A)$.
    \end{itemize}
\end{defi}

Figure~\ref{fig:defs_example} illustrates Definitions~\ref{def:topological}
and~\ref{def:boundary} with an arrangement of three lines in~$\mathbb{R}^2$.
The three lines $H_1,H_2,H_3$ are in general position and form one bounded
chamber (the triangle) and six unbounded chambers.  We take $S$ to consist of
the triangle together with the two adjacent chambers that share an edge with it
on $H_1$ and $H_2$, respectively.  Since every chamber in $S$ lies on the same
side of $H_3$, the hyperplane $H_3$ does not intersect $S$, and hence
$\{3\}\notin\supp(S)$.  The support is
$\supp(S)=\{\emptyset,\{1\},\{2\}\}$: the stratum $T_S(\{1\})$ is the open
segment $T_S\cap H_1$ and $T_S(\{2\})$ is the open segment $T_S\cap H_2$.
On the boundary side, $b(\emptyset)=\{H_3\}$,
$b(\{1\})=\{H_2,H_3\}$, and $b(\{2\})=\{H_1,H_3\}$.

\begin{figure}[ht]
    \centering
    \begin{subfigure}[b]{0.47\textwidth}
        \centering
        \begin{tikzpicture}[scale=0.82]
            \fill[cyan!10] (-1,1) -- (-2.2,2.2) -- (2.2,2.2) -- (1,1) -- cycle;
            \fill[cyan!10] (0,0) -- (1,1) -- (2.5,1) -- (2.5,-1.9) -- (1.9,-1.9) -- cycle;
            \fill[cyan!10] (-1,1) -- (1,1) -- (0,0) -- cycle;
            \draw[thick] (-2.5,1)--(2.5,1) node[below left] {$H_1$};
            \draw[thick] (-1.9,-1.9)--(2.2,2.2) node[above right] {$H_2$};
            \draw[thick] (1.9,-1.9)--(-2.2,2.2) node[above left] {$H_3$};
            \draw[ultra thick, orange!85!black] (-0.96,1)--(0.96,1);
            \draw[ultra thick, green!55!black] (0.04,0.04)--(0.96,0.96);
            \node[cyan!55!black] at (-0.15,0.55) {$S$};
            \node[orange!85!black] at (0,1.3) {\small $T_S(\{1\})$};
            \node[green!55!black] at (1.1,0.2) {\small $T_S(\{2\})$};
        \end{tikzpicture}
        \caption{The three chambers in $S$ (shaded), the strata
            $T_S(\{1\})=T_S\cap H_1$ and $T_S(\{2\})=T_S\cap H_2$.}
    \end{subfigure}
    \hspace{0.03\textwidth}
    \begin{subfigure}[b]{0.47\textwidth}
        \centering
        \begin{tikzpicture}[scale=0.82]
            \fill[cyan!8] (-1,1) -- (-2.2,2.2) -- (2.2,2.2) -- (1,1) -- cycle;
            \fill[cyan!8] (0,0) -- (1,1) -- (2.5,1) -- (2.5,-1.9) -- (1.9,-1.9) -- cycle;
            \fill[cyan!8] (-1,1) -- (1,1) -- (0,0) -- cycle;
            \draw[thick] (-2.5,1)--(2.5,1) node[below left] {$H_1$};
            \draw[thick] (-1.9,-1.9)--(2.2,2.2) node[above right] {$H_2$};
            \draw[thick] (1.9,-1.9)--(-2.2,2.2) node[above left] {$H_3$};
            \draw[ultra thick, red!75!black] (1.9,-1.9)--(-2.2,2.2);
            \draw[ultra thick, red!75!black] (1,1)--(2.2,2.2);
            \draw[ultra thick, red!75!black] (1,1)--(2.5,1);
            \node[red!75!black] at (-1.1,0.15) {$\Sigma_S$};
            \node[orange!75!black, font=\small] at (0.15,-1.1) {$H_3\!\in\! b(\emptyset)$};
            \fill[orange] (1,1) circle (3pt);
            \node[orange!85!black, font=\small, anchor=south west] at (1.55,0.95) {$H_1\!\in\! b(\{2\})$};
            \node[orange!85!black, font=\small, anchor=south west] at (-0.9,1.15) {$H_2\!\in\! b(\{1\})$};
            \fill[orange] (-1,1) circle (3pt);
            \node[orange!85!black, font=\small, anchor=north east] at (-0.8,1.0) {$H_3\!\in\! b(\{1\})$};
            \fill[orange] (0,0) circle (3pt);
            \node[orange!85!black, font=\small, anchor=north west] at (0.2,0.35) {$H_3\!\in\! b(\{2\})$};
        \end{tikzpicture}
        \caption{The boundary $\Sigma_S=\partial T_S$ (thick red) and bounding hyperplanes.}
    \end{subfigure}
    \caption{Three lines in $\mathbb{R}^2$ forming one bounded and six unbounded
        chambers.  $S$ consists of the triangle and two adjacent chambers,
        giving $\supp(S)=\{\emptyset,\{1\},\{2\}\}$.}
    \label{fig:defs_example}
\end{figure}
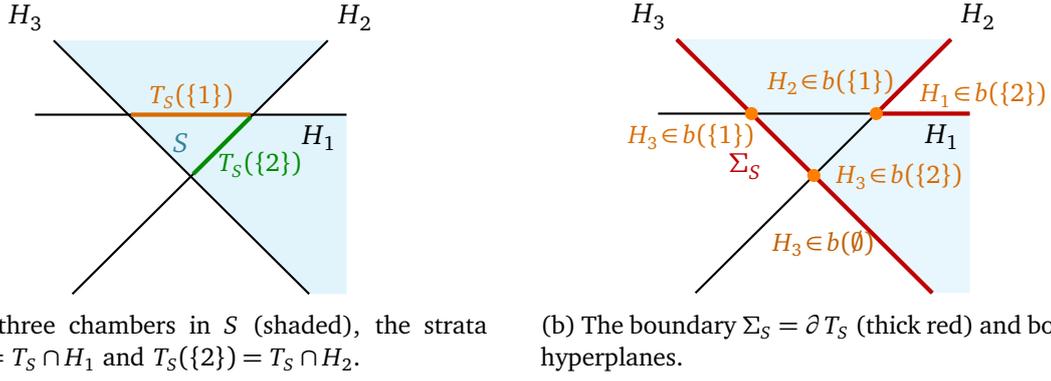


Throughout the paper we assume that $\mathcal{H}$ is in \emph{general position}, meaning that
the intersection of any $k\le d$ hyperplanes is an affine subspace of dimension $d-k$, and that
no $d+1$ hyperplanes have a common intersection.

The following observations are immediate consequences of general position.

\begin{obs}
    Let $\mathcal{H}$ be a hyperplane arrangement in general position in $\mathbb{R}^d$.
    \begin{itemize}
        \item For every $A\in\supp(S)$, the stratum $T_S(A)$ is a $(d-|A|)$-dimensional subset of $L_A$.
        \item If $C\in\conn(T_S(A))$ and $C'\in\conn(T_S(A'))$ with $A\neq A'$, then $C\neq C'$.
    \end{itemize}
\end{obs}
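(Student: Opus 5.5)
Both bullets follow from one fact about general position: for $A\subseteq[n]$ with $L_A\neq\emptyset$, we have $\dim L_A=d-|A|$, and for every $e\notin A$ the intersection $L_A\cap H_e=L_{A\cup\{e\}}$ is a proper subspace of $L_A$. The proper-subspace claim holds because either $|A|=d$, in which case $L_A$ is a point and $L_{A\cup\{e\}}=\emptyset$ since no $d+1$ hyperplanes meet, or $|A|<d$ and $L_{A\cup\{e\}}$ has dimension $d-|A|-1$ or is empty. Taking $A=\emptyset$ in particular gives that every $H_e$ is a genuine hyperplane.

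For the first bullet, take $A\in\supp(S)$. Then $T_S(A)=T_S\cap L_A$ is nonempty, so $L_A\neq\emptyset$, and general position gives $\dim L_A=d-|A|$. Since $T_S$ is open in $\R^d$, the set $T_S(A)$ is a nonempty relatively open subset of $L_A$. A nonempty open subset of a $(d-|A|)$-dimensional affine space is itself $(d-|A|)$-dimensional, since it contains a relative ball. This already proves the claim. I would also add the remark that for $e\notin A$, the set $T_S(A)\setminus H_e$ is open and dense in $T_S(A)$, because $L_A\cap H_e$ is a proper subspace of $L_A$. This remark is what the second bullet needs.

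For the second bullet, suppose $C\in\conn(T_S(A))$ and $C'\in\conn(T_S(A'))$ are equal as subsets, with $A\neq A'$. By symmetry we may assume there is some $e\in A\setminus A'$. Then $C\subseteq L_A\subseteq H_e$. On the other hand, $C=C'$ is a nonempty relatively open subset of $L_{A'}$, being a connected component of the relatively open set $T_S(A')$. By the remark, $L_{A'}\cap H_e$ is a proper subspace of $L_{A'}$, so it has empty relative interior. Hence $C'\not\subseteq H_e$, which contradicts $C\subseteq H_e$.

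The only point needing care is that components of $T_S(A')$ are relatively open in $L_{A'}$. This holds because $L_{A'}$ is locally connected, so the components of an open subset of it are open. Apart from that, and from handling the case $|A|=d$ where the stratum is a point, the argument is immediate.
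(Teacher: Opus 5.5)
Your argument is correct. The paper gives no proof; it only calls these ``immediate consequences of general position.'' Your write-up is the routine verification that remark has in mind: general position gives $\dim L_A=d-|A|$, and $T_S(A)$ and its components are nonempty relatively open subsets of $L_A$, while for $e\notin A'$ the set $L_{A'}\cap H_e$ has empty relative interior in $L_{A'}$.
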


We now record a structural lemma describing how strata behave under cutting by a hyperplane. It
will be useful for proving the main results of the paper.

\begin{lem}\label{lem:strata_conn}
    Let $S$ be a subset of chambers of a hyperplane arrangement $\mathcal{H}$ in general position
    in $\mathbb{R}^d$, and let $e\in[n]$.
    Define
    \[
        S^+ := \{\, C\in S \mid C\subset H_e^+ \,\}.
    \]
    Let $A\in\supp(S)$ with $e\notin A$.
    If $c\in\conn(T_S(A\cup\{e\}))$, then $c$ lies on the boundary of exactly one connected component
    of $T_{S^+}(A)$.
\end{lem}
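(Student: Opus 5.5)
Fix $c\in\conn(T_S(A\cup\{e\}))$. Then $c$ is a connected, relatively open subset of $L_{A\cup\{e\}}=L_A\cap H_e$, of dimension $d-|A|-1$ by the Observation, and $c\subset T_S$. The plan is to show two things in turn: first that $c$ is contained in the closure of $T_{S^+}(A)$ but not in $T_{S^+}(A)$ itself, and second that the points of $L_A$ near $c$ on the positive side of $H_e$ form a connected set lying inside a single component of $T_{S^+}(A)$.

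For the first step, take $x\in c$. Since $x\in T_S$, every chamber whose closure contains $x$ lies in $S$. Because $x\in H_e$, some of these chambers lie in $H_e^+$, and those belong to $S^+$. Hence $x\in\overline{T_{S^+}}$. In fact $x\in\overline{T_{S^+}\cap L_A}$: by general position $L_A$ is transversal to $H_e$, so points of $L_A\cap H_e^+$ close to $x$ lie in the closure of a chamber of $S$ on the $+$ side. I will make this precise next. On the other hand, $x\notin T_{S^+}$, because the chambers on the $-$ side of $H_e$ whose closures contain $x$ are not in $S^+$. So $x$ lies in $\partial T_{S^+}$, and therefore in the relative boundary of $T_{S^+}(A)$ inside $L_A$.

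For the main step I take a small open neighbourhood $U$ of $c$ in $L_A$. Concretely, for each $x\in c$ choose a ball $B_x\subset T_S$ meeting only the hyperplanes through $x$, and let $U=\bigcup_x (B_x\cap L_A)$. The key claim is that
\[
U^+:=U\cap H_e^+\subseteq T_{S^+}(A).
\]
To see this, take $y\in U^+$. It lies in some $B_x\subset T_S$, so every chamber whose closure contains $y$ is in $S$. All such chambers are on the $+$ side of $H_e$, since $y\notin H_e$ and $B_x$ stays away from other hyperplanes except those through $x$. Hence they are all in $S^+$, and $y\in T_{S^+}\cap L_A$. The same argument gives nonemptiness, since $L_A\not\subset H_e$.

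It remains to show that $U^+$ is connected, which is the expected obstacle. Here I use general position again. Inside $L_A$, the set $L_A\cap H_e$ is a hyperplane of $L_A$. Each $B_x\cap L_A\cap H_e^+$ is a half-ball in $L_A$, so it is convex and connected. Two such half-balls whose centres lie in $c$ and whose full balls intersect overlap on their $+$ halves, because the overlap of two balls centred on a hyperplane is symmetric about that hyperplane and open. Since $c$ is connected, chaining these overlaps shows that $U^+$ is connected. Consequently $U^+$ lies in a single component $K$ of $T_{S^+}(A)$, and $c\subseteq\overline{U^+}\subseteq\overline K$, so $c$ is on the boundary of $K$.

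For uniqueness, suppose $c$ also met the closure of another component $K'$ at a point $x$. Then near $x$, the set $K'$ contains points of $L_A\cap H_e^+$ arbitrarily close to $x$, because $T_{S^+}(A)\subset H_e^+$. These points lie in $B_x\cap L_A\cap H_e^+\subseteq U^+\subseteq K$, which forces $K'=K$. Hence $c$ is on the boundary of exactly one component of $T_{S^+}(A)$. The steps that need care are choosing the balls $B_x$ and the half-ball overlap argument, which is where general position, through the transversality of $L_A$ and $H_e$, is really used.
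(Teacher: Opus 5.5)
Your proof is correct, and it takes a different route from the paper.

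\textbf{The paper's route.} The paper argues combinatorially. It writes $c$ as the union of chambers $U_c$ of the induced arrangement on $L_{A\cup\{e\}}$, which form a connected subgraph of that chamber graph. It lifts each $R\in U_c$ to the unique chamber $R'$ of the induced arrangement on $L_A$ that lies in $H_e^+$ and has $R$ as a facet. It then uses general position (adjacent chambers are separated by exactly one hyperplane) to see that adjacency of $R_1,R_2$ lifts to adjacency of $R_1',R_2'$. So all lifted chambers lie in a single component of $T_{S^+}(A)$.

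\textbf{Your route.} You build a one-sided collar $U^+=\bigcup_{x\in c}\bigl(B_x\cap L_A\cap H_e^+\bigr)$. You show $U^+\subseteq T_{S^+}(A)$ directly from the pointwise characterisation of the thickening. You get connectedness of $U^+$ from the reflection symmetry of balls centred on $L_{A\cup\{e\}}$, combined with a chaining argument over the connected set $c$.

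\textbf{What your version buys.} Existence and uniqueness come out of the same object. Uniqueness even comes in the stronger form that no other component of $T_{S^+}(A)$ has closure meeting $c$, which is exactly what makes the incidence in Lemma~\ref{lem:gluing_graph} well defined. In addition, the only use of general position is $L_A\not\subseteq H_e$.

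\textbf{What the paper's version buys.} It stays in the chamber-graph language used throughout the surrounding arguments. Its cost is that it leaves implicit that the facets shared by adjacent chambers of $U_c$, and by their lifts $R_1',R_2'$, lie inside the respective thickenings. This is true in general position but is not spelled out, and your collar argument sidesteps it entirely.
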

\begin{proof}
    Let $c\in\conn(T_S(A\cup\{e\}))$.
    Since
    \[
        T_S(A\cup\{e\}) = T_S(A)\cap H_e ,
    \]
    the set $c$ is contained in $T_S(A)$.
    In particular, there exists at least one connected component of $T_{S^+}(A)$
    whose boundary contains $c$.

    Consider the hyperplane arrangement induced by $\mathcal{H}$ on the affine
    subspace
    \[
        L_{A\cup\{e\}} = L_A \cap H_e .
    \]
    The set $c$ is the topological closure of a union of chambers of this induced arrangement.
    Let $U_c$ denote the set of all such chambers.
    Since $c$ is connected, the subgraph of the chamber graph of the induced
    arrangement on $L_{A\cup\{e\}}$ induced by $U_c$ is connected.

    Let $R_1$ and $R_2$ be two adjacent chambers in $U_c$.
    Because the arrangement is in general position, $R_1$ and $R_2$ are separated
    by exactly one hyperplane.
    Each of $R_1$ and $R_2$ is a codimension-one face of a unique chamber of the
    induced arrangement on $L_A$ contained in the half-space $H_e^+$.
    Denote these chambers by $R_1'$ and $R_2'$.
    Since $R_1$ and $R_2$ are separated by a single hyperplane, the chambers
    $R_1'$ and $R_2'$ are adjacent in the chamber graph of the induced arrangement
    on $L_A$.

    It follows that all chambers of $T_{S^+}(A)$ whose boundaries contain faces in
    $U_c$ lie in the same connected component of $T_{S^+}(A)$.
    Therefore, $c$ lies entirely on the boundary of exactly one connected component
    of $T_{S^+}(A)$.
\end{proof}

\begin{lem}\label{lem:gluing_graph}
    Let $\mathcal H$ be in general position and let $S$ be a set of chambers.
    Fix $e\in[n]$ and write
    \[
        S^+ := \{C\in S : C\subset H_e^+\},\qquad
        S^- := \{C\in S : C\subset H_e^-\}.
    \]
    Fix $A\subset[n]$ with $e\notin A$ and assume $A\in\supp(S)$.

    Define a bipartite graph $\Gamma_{A,e}$ as follows:
    \begin{itemize}
        \item Vertex set:
              \[
                  V(\Gamma_{A,e}) := \conn(T_{S^+}(A))\ \sqcup\ \conn(T_{S^-}(A)).
              \]
        \item Edge set:
              \[
                  E(\Gamma_{A,e}) := \conn(T_S(A\cup\{e\})).
              \]
        \item Incidence: for $D\in\conn(T_S(A\cup\{e\}))$, connect the unique component
              $C^+\in\conn(T_{S^+}(A))$ whose boundary contains $D$ to the unique
              $C^-\in\conn(T_{S^-}(A))$ whose boundary contains $D$ (existence/uniqueness given by
              Lemma~\ref{lem:strata_conn}).
    \end{itemize}

    Then:
    \begin{enumerate}[(i)]
        \item The connected components of $\Gamma_{A,e}$ are in bijection with $\conn(T_S(A))$.
              In particular,
              \[
                  c(\Gamma_{A,e}) = |\conn(T_S(A))|.
              \]
        \item Consequently,
              \[
                  |\conn(T_{S^+}(A))| + |\conn(T_{S^-}(A))|
                  \ \le\
                  |\conn(T_S(A))| + |\conn(T_S(A\cup\{e\}))|.
              \]
    \end{enumerate}
\end{lem}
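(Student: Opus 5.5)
Part (i) comes from a direct topological decomposition of $T_S(A)$; part (ii) then follows by counting edges of $\Gamma_{A,e}$.

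\textbf{Decomposition of $T_S(A)$.} We will show that
\[
T_S(A)=T_{S^+}(A)\ \sqcup\ T_{S^-}(A)\ \sqcup\ T_S(A\cup\{e\}).
\]
First, $T_{S^\pm}\subseteq T_S$ and $T_{S^\pm}\subseteq H_e^\pm$. Indeed, if a point of $H_e$ lay in $T_{S^+}$, then all chambers whose closures contain it would be in $S^+$, which is impossible because some of those chambers lie in $H_e^-$. Conversely, let $x\in T_S\cap H_e^+$. Every chamber whose closure contains $x$ lies in $H_e^+$ and belongs to $S$, hence to $S^+$, so $x\in T_{S^+}$. The same holds for $H_e^-$. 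The remaining points of $T_S(A)$ are exactly $T_S(A)\cap H_e=T_S(A\cup\{e\})$. Each of $T_{S^\pm}(A)$ is open in $T_S(A)$, and $T_S(A\cup\{e\})$ is relatively closed.

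\textbf{Part (i).} Fix a component $K\in\conn(T_S(A))$. By the decomposition, $K$ is a union of components of $T_{S^+}(A)$, of $T_{S^-}(A)$, and of $T_S(A\cup\{e\})$, since each such component is connected and contained in $T_S(A)$. Define a map from components of $\Gamma_{A,e}$ to $\conn(T_S(A))$ by sending a vertex to the component of $T_S(A)$ containing it. An edge $D$ lies in the closure of its endpoints $C^\pm$ and is itself contained in $T_S(A)$, so $C^+\cup D\cup C^-$ is connected. Hence the map is well defined on components of $\Gamma$.

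\emph{Surjectivity.} Every $K$ contains an open piece of $T_S(A)$ off $H_e$, because $T_S(A)$ is relatively open in $L_A$ and $L_A\not\subseteq H_e$ by general position. So $K$ contains a vertex of $\Gamma$.

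\emph{Injectivity.} This is the main step. Suppose the vertices of $\Gamma$ inside $K$ split into two nonempty unions $P,Q$ of $\Gamma$-components. Let $P'$ be $P$ together with all edges $D$ incident to it, and define $Q'$ likewise; the edges go to the side of their endpoints. Then $K=P'\sqcup Q'$, and it remains to show that $P'$ and $Q'$ are both relatively open in $K$, which contradicts connectedness. Openness at points of $T_{S^\pm}(A)$ is clear. At a point $x\in D\subseteq H_e\cap T_S(A)$, a small ball around $x$ in $L_A$ lies in $T_S(A)$. The ball minus $H_e$ consists of two half-balls, one on each side of $H_e$, each connected. Each half-ball meets only the component $C^\pm$ adjacent to $D$: it is connected, contained in $T_{S^\pm}(A)$, and has $x\in D$ in its closure, so by the uniqueness in Lemma~\ref{lem:strata_conn} it lies in the component whose boundary contains $D$. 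The part of the ball on $H_e$ lies in $D$ once the radius is small, since $D$ is relatively open in $L_{A\cup\{e\}}$. Hence the whole ball lies in $P'$ (or in $Q'$), giving openness. The subtle point, and the main obstacle, is making sure the half-balls avoid other components of $T_{S^\pm}(A)$; connectedness of each half-ball handles this.

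\textbf{Part (ii).} For any graph, the number of components is at least $|V|-|E|$. Applying this to $\Gamma_{A,e}$ gives
\[
|\conn(T_{S^+}(A))|+|\conn(T_{S^-}(A))|-|\conn(T_S(A\cup\{e\}))|\le c(\Gamma_{A,e})=|\conn(T_S(A))|,
\]
which rearranges to (ii).
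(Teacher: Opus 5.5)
Your proposal is correct and follows the same route as the paper: the disjoint decomposition $T_S(A)=T_{S^+}(A)\sqcup T_{S^-}(A)\sqcup T_S(A\cup\{e\})$, the identification of components of $\Gamma_{A,e}$ with $\conn(T_S(A))$, and the spanning-forest count $|V|-c(\Gamma_{A,e})\le|E|$; you make explicit the well-definedness, surjectivity and clopen-partition arguments that the paper compresses into a single sentence. In the half-ball step you do not actually need the uniqueness from Lemma~\ref{lem:strata_conn}, whose literal statement concerns all of $D$ rather than the single point $x$: since $x\in D\subseteq\partial C^{\pm}$, the component $C^{\pm}$ meets the connected half-ball, which therefore lies entirely in $C^{\pm}$.
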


\begin{proof}
    (i) Consider the set
    \[
        X := T_{S^+}(A)\ \cup\ T_{S^-}(A)\ \cup\ T_S(A\cup\{e\}) \subseteq L_A .
    \]
    By construction, $X = T_S(A)$, since crossing $H_e$ inside $L_A$ is recorded exactly by
    the stratum $T_S(A\cup\{e\}) = T_S(A)\cap H_e$.
    Moreover, Lemma~\ref{lem:strata_conn} (and its symmetric version for $S^-$) implies that each
    component $D\in \conn(T_S(A\cup\{e\}))$ lies on the boundary of exactly one component of
    $T_{S^+}(A)$ and exactly one component of $T_{S^-}(A)$, so the adjacency relation encoded by
    $\Gamma_{A,e}$ precisely describes how the components of $T_{S^+}(A)$ and $T_{S^-}(A)$ are glued
    together through $T_S(A\cup\{e\})$. Therefore two vertices of $\Gamma_{A,e}$ lie in the same
    connected component of $\Gamma_{A,e}$ if and only if the corresponding subsets lie in the same
    connected component of $X=T_S(A)$. This gives the bijection and hence
    $c(\Gamma_{A,e}) = |\conn(T_S(A))|$.

    (ii) Let $c := c(\Gamma_{A,e})$. Each connected component of $\Gamma_{A,e}$ contains a spanning tree,
    so in that component we have $|V_i|-1 \le |E_i|$. Summing over all components gives
    \[
        |V(\Gamma_{A,e})| - c \le |E(\Gamma_{A,e})|.
    \]
    Substitute $|V(\Gamma_{A,e})| = |\conn(T_{S^+}(A))| + |\conn(T_{S^-}(A))|$,
    $|E(\Gamma_{A,e})| = |\conn(T_S(A\cup\{e\}))|$, and $c=|\conn(T_S(A))|$ from (i), yielding (ii).
\end{proof}

We will repeatedly use the following version of the well-known Kruskal--Katona theorem; see
\cite{Kruskal1963,Katona1968}.

\begin{thm}[Kruskal--Katona]\label{thm:kruskal_katona}
    For real $x$ and integer $i\ge 0$, define
    \[
        \binom{x}{i}:=\frac{x(x-1)\cdots(x-i+1)}{i!}.
    \]
    Let $m\ge 0$, let $x\ge m-1$, and let $\mathcal D\subseteq 2^{[n]}$ be a down-set, that is, $A\in\mathcal D$
    and $B\subseteq A$ imply $B\in\mathcal D$. For $0\le i\le m$, write
    \[
        \mathcal D^{(i)}:=\{A\in\mathcal D\mid |A|=i\},
    \]
    and assume that $\mathcal D^{(i)}=\emptyset$ for $i>m$. Then:
    \begin{enumerate}[(i)]
        \item If
              \[
                  |\mathcal D|\le \sum_{i=0}^m \binom{x}{i},
              \]
              then
              \[
                  |\mathcal D^{(m)}|\le \binom{x}{m}.
              \]
        \item More generally, if $0\le r\le m$ and
              \[
                  \sum_{i=r}^m |\mathcal D^{(i)}|=\sum_{i=r}^m \binom{x}{i},
              \]
              then for every sequence
              \[
                  w_r\ge w_{r+1}\ge \cdots \ge w_m\ge 0
              \]
              one has
              \[
                  \sum_{i=r}^m w_i\,|\mathcal D^{(i)}|
                  \ge
                  \sum_{i=r}^m w_i\,\binom{x}{i}.
              \]
    \end{enumerate}
\end{thm}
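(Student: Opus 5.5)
We need to prove the Kruskal–Katona variant (Theorem~\ref{thm:kruskal_katona}) with real $x$, (i) and (ii). Our plan is to derive both parts from the Lovász form of Kruskal--Katona: if $\mathcal F$ is a family of $i$-sets with $|\mathcal F|=\binom{y}{i}$ for real $y\ge i$, then its shadow satisfies $|\partial\mathcal F|\ge \binom{y}{i-1}$. Since $\mathcal D$ is a down-set, $\partial\mathcal D^{(i)}\subseteq \mathcal D^{(i-1)}$. This yields a cascade: writing $|\mathcal D^{(m)}|=\binom{y}{m}$ with $y\ge m-1$, we get $|\mathcal D^{(i)}|\ge \binom{y}{i}$ for every $i\le m$. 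The case $y<m$ gives $|\mathcal D^{(m)}|=0$ and is trivial. The lower levels are handled by iterating the shadow bound, using that $\binom{y}{i}$ is increasing in $y$ for $y\ge i-1$.

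For (i), suppose for contradiction that $|\mathcal D^{(m)}|>\binom{x}{m}$. Then $y>x\ge m-1$, and the cascade gives $|\mathcal D^{(i)}|\ge\binom{y}{i}\ge\binom{x}{i}$ for all $i$, with strict inequality at $i=m$. Summing gives $|\mathcal D|>\sum_i\binom{x}{i}$, a contradiction. Monotonicity of $\binom{\cdot}{i}$ on $[i-1,\infty)$ is exactly where the hypothesis $x\ge m-1$ is needed.

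For (ii), we use Abel summation. Set $a_i=|\mathcal D^{(i)}|-\binom{x}{i}$. Then
\[
\sum_{i=r}^m w_i a_i=\sum_{j=r}^{m} (w_j-w_{j-1})\,P_j + w_r P_r,\qquad P_j:=\sum_{i=j}^m a_i,
\]
with the convention that the $j=r$ term of the first sum is absent. Because $P_r=0$ by hypothesis and $w_j-w_{j-1}\le 0$, it suffices to show $P_j\le 0$ for every $j>r$. Equivalently, we need
\[
\sum_{i\ge j}|\mathcal D^{(i)}|\le\sum_{i\ge j}\binom{x}{i}
\]
given equality of the sums from $r$.

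To prove this tail inequality, suppose it fails for some $j$. Choose real $z$ with
\[
\sum_{i\ge j}|\mathcal D^{(i)}|=\sum_{i\ge j}\binom{z}{i};
\]
then $z>x$. We now apply a tail version of the cascade. Among all down-sets with prescribed upper-level mass, the lower levels are minimized by the colex-initial configuration, which realizes $\binom{z}{i}$ level by level. The key tool is the Lovász-type statement that $\sum_{i\ge j}|\mathcal D^{(i)}|=\sum_{i\ge j}\binom{z}{i}$ implies $|\mathcal D^{(i)}|\ge\binom{z}{i}$ for $i<j$. With that in hand, $\sum_{i=r}^{j-1}|\mathcal D^{(i)}|\ge\sum_{i=r}^{j-1}\binom{z}{i}>\sum_{i=r}^{j-1}\binom{x}{i}$. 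Combined with the failure of the tail inequality, this makes the total from $r$ exceed $\sum_{i=r}^m\binom{x}{i}$, contradicting the hypothesis.

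The main obstacle is this tail step. The shadow of a union of levels does not directly bound each level separately, and with real parameters we cannot simply read off a per-level $y_i$ and compare. I would first attempt an induction on $m-j$. Let $y_m$ be defined by $|\mathcal D^{(m)}|=\binom{y_m}{m}$, apply the shadow bound to get $|\mathcal D^{(m-1)}|\ge\binom{y_m}{m-1}$, and then merge the two top levels. If $y_m\ge z$ we are done. Otherwise, the excess at level $m-1$ must compensate, and one shows its shadow still dominates $\binom{z}{m-2}$ using convexity of $y\mapsto\binom{y}{i}$ together with the identity $\binom{y}{i}+\binom{y}{i-1}=\binom{y+1}{i}$. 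If this gets messy, a fallback is to cite the weighted Kruskal--Katona inequality directly; it is classical, and the paper treats the theorem as known.
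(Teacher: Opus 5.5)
The paper gives no proof of this statement; it quotes it as a version of the Kruskal--Katona theorem and cites Kruskal and Katona. Judged on its own, your argument is sound except at one step. The Lov\'asz cascade $|\mathcal D^{(i)}|\ge\binom{y}{i}$ from the top level is correct, as is your proof of (i). So is the Abel-summation reduction of (ii) to the tail inequalities $\sum_{i\ge j}|\mathcal D^{(i)}|\le\sum_{i\ge j}\binom{x}{i}$ for $r<j\le m$.

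The gap is that you never prove these tail inequalities, and they are the entire content of (ii) beyond (i). You reduce them to a ``tail Lov\'asz'' statement and support it with a heuristic about colex-initial configurations, which cannot realize non-integral values $\binom{z}{i}$. You then sketch an induction on $m-j$ with an unspecified convexity argument, and finally fall back on citation.

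None of this is needed: the step is your proof of (i) started at an intermediate level. Suppose the tail inequality failed for some $j>r$.
\begin{itemize}
\item Then some single level $i_0\in[j,m]$ satisfies $|\mathcal D^{(i_0)}|>\binom{x}{i_0}\ge 0$.
\item Write $|\mathcal D^{(i_0)}|=\binom{y}{i_0}$ with $y\ge i_0$. Since $\binom{\cdot}{i_0}$ is strictly increasing on $[i_0-1,\infty)$ and $x\ge m-1\ge i_0-1$, we get $y>x$.
\item The cascade from level $i_0$ gives $|\mathcal D^{(i)}|\ge\binom{y}{i}\ge\binom{x}{i}$ for every $i\le i_0$, in particular for $r\le i\le j-1$.
\item Adding the strict tail inequality yields $\sum_{i=r}^m|\mathcal D^{(i)}|>\sum_{i=r}^m\binom{x}{i}$, contradicting the hypothesis.
\end{itemize}

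The same remark also proves your key statement within your own framework. Since $\sum_{i\ge j}\bigl(|\mathcal D^{(i)}|-\binom{z}{i}\bigr)=0$, some $i_0\ge j$ has $|\mathcal D^{(i_0)}|\ge\binom{z}{i_0}>0$. Positivity holds because $z>x\ge m-1\ge i_0-1$. Running the cascade from $i_0$ rather than from $m$ does the rest. Your ``if $y_m\ge z$ we are done'' was the right idea, applied only to the top level.

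A minor slip: your strict inequality $\sum_{i=r}^{j-1}\binom{z}{i}>\sum_{i=r}^{j-1}\binom{x}{i}$ fails for $r=0$, $j=1$, where both sides equal $1$. This is harmless, because the strictness already comes from the tail.
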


\section{First results and a conjecture}

Our approach to proving lower bounds on the edge boundary is based
on the stratification of the set. Our first result bounds the
number of vertices (chambers) in terms of the strata.

\begin{prop}\label{prop:num_vertices}
    Let $S$ be a subset of chambers of a hyperplane arrangement $\mathcal{H}$ in
    general position in $\mathbb{R}^d$.
    There exists an injective map
    \[
        \mu : S \longrightarrow \bigsqcup_{A\in\supp(S)} \conn(T_S(A))
    \]
    such that for every $A\in\supp(S)$ at least one connected component of $T_S(A)$
    is hit by $\mu$, i.e.,
    \[
        \mu(S) \cap \conn(T_S(A)) \neq \emptyset .
    \]
    Consequently,
    \[
        |\supp(S)| \;\le\; |S| \;\le\; \sum_{A\in\supp(S)} |\conn(T_S(A))| .
    \]
\end{prop}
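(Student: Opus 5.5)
Our plan is to construct $\mu$ by assigning to each chamber $C\in S$ a distinguished point, namely a vertex-like ``lowest'' point of $\overline{C}$ with respect to a generic linear functional. Fix a generic linear functional $\ell$ on $\mathbb{R}^d$, generic in the sense that it is not constant on any positive-dimensional intersection $L_A$ and its values at the vertices of the arrangement are pairwise distinct. For a chamber $C\in S$, consider the face $F_C$ of $\overline{C}$ minimizing $\ell$ within $\overline{T_S}$ in the following sense: restrict to the minimal face of $\overline{C}$ on which $\ell$ attains its infimum over $\overline C$, or, if $\ell$ is unbounded below on $C$, take the lowest face along the recession direction. Let $A_C$ be the set of hyperplanes containing that face, but keep only those hyperplanes $H_f$ for which the face still lies in $T_S$. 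Concretely, we would define $A_C$ as the maximal $A$ such that some point of $\overline C\cap L_A$ lies in $T_S$ and is $\ell$-minimal among such points. Then we set $\mu(C)$ to be the component of $T_S(A_C)$ containing that point.

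To prove injectivity, we argue that the chamber can be recovered from its image. Inside a component $c\in\conn(T_S(A))$, the chambers $C$ with $A_C=A$ and $\mu(C)=c$ must all have their minimal point in $c$. By general position, each face of $L_A$ lies in exactly $2^{|A|}$ chambers, and exactly one of them lies ``above'' the face with respect to $\ell$ in every coordinate direction normal to $L_A$. This is the usual sweep argument: each chamber has a unique lowest vertex, and each vertex is the lowest vertex of exactly one chamber. A cleaner route is to define $\mu$ by induction via Lemma~\ref{lem:gluing_graph}. Pick a hyperplane $H_e$ intersecting $S$, apply the statement to $S^+$ and $S^-$, and merge. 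Components of $T_{S^\pm}(A)$ glue into components of $T_S(A)$ along the graph $\Gamma_{A,e}$, and components of $T_S(A\cup\{e\})$ serve as the edges. We would map the extra chambers, whose images collide after merging, onto edges of $\Gamma_{A,e}$ that are not in a spanning forest. Part (ii) of Lemma~\ref{lem:gluing_graph} is exactly the counting statement that makes this possible, since it gives enough edges to absorb the collisions. The base case is a set $S$ that no hyperplane intersects, which is then a single chamber mapped to $T_S(\emptyset)$.

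For the surjectivity-on-strata condition, that every $A\in\supp(S)$ is hit, we would carry it through the induction. For $A$ with $e\notin A$, some component of $T_S(A)$ contains the image of a component of $T_{S^\pm}(A)$ that was hit, and we choose the spanning forest so that hit vertices remain represented. For $A=A'\cup\{e\}$ we need some component of $T_S(A'\cup\{e\})$ to be hit. Here we use that $T_S(A')$ meets $H_e$, so $\Gamma_{A',e}$ has at least one edge. We therefore want an unused chamber to assign to it, which requires the merging to produce at least one spare chamber whenever $T_S(A')\cap H_e\ne\emptyset$. The consequences then follow at once: the hitting property gives $|\supp(S)|\le|S|$ by choosing one preimage per $A$, which is an injection $\supp(S)\to S$ since the strata are disjoint, and injectivity of $\mu$ gives the upper bound.

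The main obstacle is this bookkeeping step. Each edge of $\Gamma_{A,e}$ and each stratum $A\ni e$ must receive a chamber without double use, and the required slack must come from the chambers on both sides of $H_e$ adjacent across each component of $T_S(A\cup\{e\})$. If the inductive bookkeeping becomes messy, we would fall back on the generic-functional sweep. There, hitting $A$ amounts to taking the $\ell$-minimal point of a component of $T_S(A)$ inside $L_A$ and noting that the unique chamber lying above it is in $S$, because that point is interior to $T_S$.
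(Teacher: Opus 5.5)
Your second route (split along $H_e$, apply the statement to $S^\pm$, merge via Lemma~\ref{lem:gluing_graph}(ii)) is the paper's route. However, the merging rule you describe fails, and the step you flag as the main obstacle is left open.

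\emph{The merging rule.} Colliding chambers must be sent to edges \emph{of} a spanning forest of $\Gamma_{A,e}$, not to edges outside it. If $S$ consists of two chambers adjacent across $H_e$ and $A=\emptyset$, then $\Gamma_{\emptyset,e}$ is a single edge. Both chambers land in the unique component of $T_S(\emptyset)$, and there is no non-forest edge for the second one. Lemma~\ref{lem:gluing_graph}(ii), i.e.\ $|V(\Gamma_{A,e})|-c(\Gamma_{A,e})\le |E(\Gamma_{A,e})|$, is precisely the count of forest edges.

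\emph{The hitting condition.} Moving only colliding chambers cannot guarantee that $\conn(T_S(A\cup\{e\}))$ is hit. The chambers that $\mu_\pm$ send into the $A$-block may sit in different components of $\Gamma_{A,e}$, and then nothing collides.

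\emph{The sweep fallback.} It does not repair this. Let $S$ be the four chambers around $v=H_1\cap H_2$ in $\mathbb{R}^2$. Every other vertex of these chambers lies on a third line $H_g$, and all four chambers lie on one side of $H_g$. So $v$ is the only vertex of each of them lying in $T_S$, and your rule selects $A_C=\{1,2\}$ and $\mu(C)=\{v\}$ for all four. But here $|S|=4=\sum_{A\in\supp(S)}|\conn(T_S(A))|$, which forces $\mu$ to be a bijection.

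The missing observation is where the spare chamber comes from. Suppose $A\cup\{e\}\in\supp(S)$ and take $x\in T_S\cap L_A\cap H_e$. Since $T_S$ is open and $L_A$ is transverse to $H_e$, points of $L_A$ near $x$ on both sides of $H_e$ lie in $T_S\cap H_e^\pm=T_{S^\pm}$. Hence $A\in\supp(S^+)\cap\supp(S^-)$.

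The inductive hitting property then puts at least one chamber of $S^+$ and one of $S^-$ into the $A$-block. Its target $\conn(T_S(A))\sqcup\conn(T_S(A\cup\{e\}))$ has both parts nonempty and, by Lemma~\ref{lem:gluing_graph}(ii), is at least as large as $\conn(T_{S^+}(A))\sqcup\conn(T_{S^-}(A))$. Since targets of different blocks are disjoint, you may reassign the chambers of each block freely, ignoring the structure of $\Gamma_{A,e}$: send one to each part and the rest injectively anywhere. This is exactly the paper's step.

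In fact, the existence of $\mu$ is equivalent to $|\supp(S)|\le|S|\le\sum_{A}|\conn(T_S(A))|$, so the induction can be run purely numerically. Summing Lemma~\ref{lem:gluing_graph}(ii) over all $A$ with $e\notin A$ gives the upper bound. For the lower bound, $B\mapsto B\setminus\{e\}$ injects $\{B\in\supp(S): e\in B\}$ into $\supp(S^+)\cap\supp(S^-)$, so $|\supp(S)|\le|\supp(S^+)|+|\supp(S^-)|\le|S|$.
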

\begin{proof}
    We proceed by induction on the number of hyperplanes intersecting $T_S$.
    Without loss of generality we can assume that $T_S$ is connected.

    We start with the base case.
    If no hyperplane intersects $T_S$, then $T_S$ consists of a single chamber.
    Hence $\supp(S)=\{\emptyset\}$ and
    \[
        |S| = |\conn(T_S(\emptyset))|.
    \]
    In this case the statement is trivial.

    Now we continue with the inductive step.
    Assume that some hyperplane $H_e$ intersects $T_S$.
    Define
    \[
        S^+ := \{\, C\in S \mid C\subset H_e^+ \,\},
        \qquad
        S^- := \{\, C\in S \mid C\subset H_e^- \,\}.
    \]
    Then $S=S^+\sqcup S^-$ and $|S|=|S^+|+|S^-|$.

    By induction, for each $\sigma\in\{+,-\}$ there exists an injective map
    \[
        \mu_\sigma : S^\sigma \to \bigsqcup_{A\in\supp(S^\sigma)} \conn(T_{S^\sigma}(A))
    \]
    such that for every $A\in\supp(S^\sigma)$, at least one component of
    $\conn(T_{S^\sigma}(A))$ is hit.

    We now construct an injective map
    \[
        \mu : S \to \bigsqcup_{A\in\supp(S)} \conn(T_S(A)).
    \]

    Fix $A\in\supp(S)$ with $e\notin A$.

    \smallskip
    \noindent
    \emph{Case 1: $A\cup\{e\}\notin\supp(S)$.}
    Then no gluing occurs along $H_e$, and
    \[
        \conn(T_S(A)) = \conn(T_{S^+}(A)) \;\sqcup\; \conn(T_{S^-}(A)).
    \]
    We define $\mu$ on chambers mapped by $\mu_+$ and $\mu_-$ to these components
    by setting $\mu=\mu_+$ on $S^+$ and $\mu=\mu_-$ on $S^-$.

    \smallskip
    \noindent
    \emph{Case 2: $A\cup\{e\}\in\supp(S)$.}
    In this case, components of $T_{S^+}(A)$ and $T_{S^-}(A)$ may be glued together
    along components of $T_S(A\cup\{e\})$.
    By Lemma~\ref{lem:gluing_graph}(ii),
    \[
        |\conn(T_{S^+}(A))| + |\conn(T_{S^-}(A))|
        \ \le\
        |\conn(T_S(A))| + |\conn(T_S(A\cup\{e\}))|.
    \]

    By the inductive hypothesis, at least one chamber of $S^+$ and one of $S^-$
    is mapped by $\mu_+$ and $\mu_-$ to components of
    $\conn(T_{S^+}(A))$ and $\conn(T_{S^-}(A))$, respectively.
    Using the inequality above, we may reassign the images of all chambers
    mapped into these components so that:
    \begin{itemize}
        \item the reassignment is injective,
        \item at least one chamber maps to a component of $\conn(T_S(A))$,
        \item at least one chamber maps to a component of $\conn(T_S(A\cup\{e\}))$.
    \end{itemize}

    Performing this construction independently for all $A\in\supp(S)$
    defines an injective map $\mu : S \to \bigsqcup_{A\in\supp(S)} \conn(T_S(A))$
    such that every $A\in\supp(S)$ is hit.
    This completes the induction.
\end{proof}

If one encodes chambers by sign vectors, then the sets
$A\in\supp(S)$ are precisely the strongly shattered subsets of the corresponding
sign-vector family: equivalently, $S$ contains a subcube of the sign hypercube
with free coordinates $A$; see, for example,
\cite{ChalopinChepoiMoranWarmuth2022,MoranRashtchian2016}. Thus the lower bound
\[
    |\supp(S)| \le |S|
\]
from Proposition~\ref{prop:num_vertices} is the lower half of the classical
Sandwich Lemma, which generalizes the Sauer--Shelah--Perles lemma. By contrast,
the upper bound
\[
    |S| \le \sum_{A\in\supp(S)} |\conn(T_S(A))|
\]
is of a different nature and should not be confused with the number of
shattered subsets appearing in the usual upper half of the Sandwich Lemma.

We now turn to lower bounds on the edge boundary $\partial S$. We first
show that small sets have large boundary.

\begin{prop}\label{prop:small}
    Let $S$ be a subset of chambers of a hyperplane arrangement $\mathcal{H}$ in
    general position in $\mathbb{R}^d$, $|\mathcal{H}| \ge d$.
    If $|S|\le 2^{d-1}$, then
    \[
        |\partial S| \;\ge\; |S|.
    \]
\end{prop}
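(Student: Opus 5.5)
The plan is a degree-counting argument: a lower bound on every chamber's degree, combined with an upper bound on the number of edges inside $S$. Write $e(S)$ for the number of edges of the chamber graph with both endpoints in $S$. Then
\[
    |\partial S| \;=\; \sum_{C\in S}\deg(C) \;-\; 2\,e(S).
\]
It therefore suffices to show two things: every chamber has degree at least $d$, and $2e(S)\le (d-1)|S|$ whenever $|S|\le 2^{d-1}$. Together these give $|\partial S|\ge d|S|-(d-1)|S|=|S|$.

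\emph{Step 1 (minimum degree).} I will show that every chamber $C$ of an arrangement in general position with $|\mathcal H|\ge d$ has at least $d$ facets.
\begin{itemize}
    \item Suppose $C$ had only $k<d$ facets, lying on hyperplanes $H_{e_1},\dots,H_{e_k}$. Then $C$ coincides with the intersection of the corresponding $k$ open half-spaces.
    \item This intersection is invariant under translations by the subspace $W$ of directions orthogonal to all $k$ normals. Since general position makes any $k\le d$ normals linearly independent, $\dim W=d-k\ge 1$.
    \item Because $n\ge d>k$, there is another hyperplane $H_f$. Again by general position, the normals of $H_{e_1},\dots,H_{e_k},H_f$ are independent, so the normal of $H_f$ is not orthogonal to $W$.
    \item Consequently, for any $x\in C$, the line $x+W$ (or a line in it) stays inside $C$ but crosses $H_f$. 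This contradicts $C$ being a chamber.
\end{itemize}
Since adjacent chambers correspond exactly to shared facets, $\deg(C)\ge d$ for every chamber.

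\emph{Step 2 (edges inside $S$).} I will use the classical fact that the chamber graph embeds isometrically into the hypercube $Q_n$, via sign vectors. Adjacent chambers differ in exactly one sign, so the induced subgraph on $S$ is a subgraph of $Q_n$ on $|S|$ vertices. Then I invoke the standard edge-isoperimetric inequality for the hypercube (Harper, or Bernstein/Hart): any $m$ vertices of $Q_n$ span at most $\tfrac12 m\log_2 m$ edges. With $m=|S|\le 2^{d-1}$ this gives
\[
    2e(S)\le |S|\log_2|S|\le (d-1)|S|.
\]
Combining this with Step 1 finishes the proof.

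The only step that needs real care is Step 1, namely the general-position argument that no chamber has fewer than $d$ facets; this is where the hypothesis $|\mathcal H|\ge d$ is used. Step 2 is a direct citation. If one prefers to stay within the paper's framework, Step 2 could alternatively be phrased by noting that the sets $\supp$ form a down-set in $2^{[n]}$ of size at most $|S|$ by Proposition~\ref{prop:num_vertices}. However, the hypercube subgraph bound is the cleanest route, and it is the one I would use.
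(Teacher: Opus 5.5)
Your proof is correct and follows the paper's argument essentially step for step. Both use the sign-vector embedding into $Q_n$ with Harper's bound $e(S)\le \tfrac12 |S|\log_2|S|$, together with the fact that every chamber has degree at least $d$, to get $|\partial S|\ge |S|(d-\log_2|S|)\ge |S|$. The one addition is your Step 1, a valid general-position proof of the minimum-degree claim that the paper only asserts; this is exactly where $|\mathcal H|\ge d$ is used.
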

\begin{proof}
    Let $G$ be the chamber graph of $\mathcal H$, let $n:=|\mathcal H|$, and put $m:=|S|$.
    To each chamber $C$ we associate its sign vector
    \[
        \sigma(C)\in\{\pm1\}^{n},
    \]
    whose $e$th coordinate records on which side of the hyperplane $H_e$ the chamber $C$
    lies. Two chambers are adjacent in $G$ if and only if they are separated by exactly one
    hyperplane, so the map $C\mapsto \sigma(C)$ embeds $G$ isometrically into the
    $n$--dimensional hypercube $Q_n$. In particular, $G[S]$ is isomorphic to an induced
    subgraph of $Q_n$ on $m$ vertices.

    Write
    \[
        e(S):=|E(G[S])|.
    \]
    By Harper's edge-isoperimetric inequality for the hypercube
    \cite{Harper1964} (see also \cite{BollobasLeader1991Compressions,RashtchianRaynaud2022}),
    every set $A\subseteq V(Q_n)$ of size $m$ satisfies
    \[
        |\partial_{Q_n} A|\ge m\bigl(n-\log_2 m\bigr).
    \]
    Applying this to $A=\sigma(S)$, we obtain
    \[
        nm \;=\; 2e(S)+|\partial_{Q_n}\sigma(S)|
        \;\ge\; 2e(S)+m\bigl(n-\log_2 m\bigr),
    \]
    and hence
    \begin{equation}\label{eq:internal-edge-bound-small}
        e(S)\le \frac12\,m\log_2 m.
    \end{equation}

    Every chamber of $\mathcal H$ has degree at least $d$ in $G$. Now we count boundary edges:
    \[
        |\partial S|
        =
        \sum_{C\in S}\deg_G(C)-2e(S)
        \ge
        dm-2e(S).
    \]
    Using \eqref{eq:internal-edge-bound-small}, we obtain
    \[
        |\partial S|
        \ge
        dm-m\log_2 m
        =
        m\bigl(d-\log_2 m\bigr).
    \]
    Finally, since $m\le 2^{d-1}$, we have $\log_2 m\le d-1$, and therefore
    \[
        |\partial S|
        \ge
        m\bigl(d-\log_2 m\bigr)\ge m = |S|,
    \]
    as claimed.
\end{proof}

For the following result we shall limit ourselves to topologically simple $S$:

\begin{defi}\label{def:strata_connected}
    A subset $S$ of chambers is called \emph{strata-connected} if for every
    $A\in\supp(S)$ the stratum $T_S(A)$ is connected.
\end{defi}
If $S$ is strata-connected, then
the lower and upper bounds from Proposition~\ref{prop:num_vertices} coincide.
We give a lemma that will be used in the proof of the main result of this section, Proposition~\ref{prop:num_edges}.


\begin{lem}\label{lem:bound_lifts_from_section}
    Let $S$ be a strata-connected subset of chambers
    of a hyperplane arrangement $\mathcal{H}$ and let $H_e$ intersect $T_S$.
    Let $A\in\supp(S)$ with $e\in A$, and write $A':=A\setminus\{e\}$.
    If $H_f\in b_S(A)$, then either:
    \begin{itemize}
        \item $H_f$ bounds both
              $T_{S^+}(A')$ and $T_{S^-}(A')$, or
        \item $H_f$ bounds one of
              $T_{S^+}(A')$ or $T_{S^-}(A')$ and intersects the other.
    \end{itemize}
\end{lem}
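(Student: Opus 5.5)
We unpack what it means for $H_f$ to bound $T_S(A)$ and transport this information from the section $L_A = L_{A'}\cap H_e$ to the two half-spaces of $L_{A'}$. Since $e\in A$, the stratum $T_S(A)=T_S(A')\cap H_e$ is nonempty. Moreover $f\notin A$, because $T_S(A)$ lies in an open half-space of $H_f$. By strata-connectedness, $T_S(A)$ is connected, so Lemma~\ref{lem:strata_conn} applies. It says that $T_S(A)$ lies on the boundary of exactly one component of $T_{S^+}(A')$ and exactly one component of $T_{S^-}(A')$. In particular both $T_{S^\pm}(A')$ are nonempty, and near $T_S(A)$ they consist of the chambers of the induced arrangement on $L_{A'}$ adjacent to the chambers of $U_{T_S(A)}$ on either side of $H_e$.

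\emph{Step 1: the touching point lifts.} Pick $x\in H_f\cap\overline{T_S(A)}\cap L_A$. For each chamber $R$ of the induced arrangement on $L_A$ inside $T_S(A)$, the lifted chambers $R'^{\pm}$ on $L_{A'}$ have $R$ as a facet. Hence $x\in\overline{T_{S^\pm}(A')}\cap H_f\cap L_{A'}$ for both signs, so the closure condition of Definition~\ref{def:boundary} holds automatically for both $T_{S^+}(A')$ and $T_{S^-}(A')$.

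\emph{Step 2: the dichotomy.} Fix $\sigma\in\{+,-\}$. Either $T_{S^\sigma}(A')$ lies entirely in one open half-space of $H_f$, or it does not. Suppose it does not. Since $T_{S^\sigma}(A')$ is open in $L_{A'}$ and $H_f\cap L_{A'}$ is a hyperplane of $L_{A'}$ (by general position, as $f\notin A'$), a set that is not contained in either open side must meet $H_f$ itself. This holds for $T_{S^\sigma}(A')$, and likewise for a connected component if we apply the argument to the component containing chambers on both sides. In that case $H_f$ intersects $S^\sigma$, and more precisely $A'\cup\{f\}\in\supp(S^\sigma)$. If instead $T_{S^\sigma}(A')$ lies on one side, then by Step 1 $H_f$ bounds it. Thus for each sign, $H_f$ either bounds $T_{S^\sigma}(A')$ or intersects it.

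\emph{Step 3: ruling out that $H_f$ intersects both.} This is the main obstacle. Suppose $H_f$ meets both $T_{S^+}(A')$ and $T_{S^-}(A')$. Then $A'\cup\{f\}\in\supp(S)$ and points of $T_S(A')\cap H_f$ occur on both sides of $H_e$. By strata-connectedness, $T_S(A'\cup\{f\})$ is connected, and it is an open subset of $L_{A'\cup\{f\}}$. A connected set containing points on both sides of $H_e$ must meet $H_e$. This gives a point of $T_S(A\cup\{f\})=T_S(A)\cap H_f$, contradicting the assumption that $T_S(A)$ lies in an open side of $H_f$. Hence at most one of the two strata is intersected, and the two alternatives of the lemma follow.

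The delicate points are in Step 2: making precise that the relevant component actually touches $H_f$, and checking that "intersects" in the lemma is read as the stratum $T_{S^\sigma}(A')$ meeting $H_f$. We would also verify that the openness and general position claims in $L_{A'}$ hold, using the Observation on strata dimensions.
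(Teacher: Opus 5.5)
Your proposal is correct and follows the paper's proof essentially step for step. The touching point of $H_f$ with $\overline{T_S(A)}$ lifts to the closures of both $T_{S^\pm}(A')$; on each side, $H_f$ either bounds or intersects $T_{S^\sigma}(A')$; and intersecting both is ruled out because $T_S(A'\cup\{f\})$ is connected while $T_S(A\cup\{f\})=T_S(A)\cap H_f=\emptyset$.

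The one step to tighten is Step~2. An open set that is not contained in one side of $H_f$ is forced to meet $H_f$ only if it is connected. Here connectedness of $T_{S^\sigma}(A')$ follows from Lemma~\ref{lem:gluing_graph}(ii), since both $T_{S^\pm}(A')$ are nonempty and $|\conn(T_{S^+}(A'))|+|\conn(T_{S^-}(A'))|\le|\conn(T_S(A'))|+|\conn(T_S(A))|=2$. The paper relies on this implicitly as well.
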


\begin{proof}
    Since $e\in A$, the stratum $T_S(A)$ is contained in $H_e$ and in the affine space
    $L_A:=\bigcap_{i\in A} H_i = H_e\cap L_{A'}$, where $L_{A'}:=\bigcap_{i\in A'} H_i$.
    The assumption $H_f\in b_S(A)$ means that $T_S(A)\subset H_f^{\sigma}$ for some sign $\sigma\in\{+,-\}$,
    and that the relative boundary of $T_S(A)$ in $L_A$ meets $H_f$.

    Now consider $T_S(A')$ as a subset of $L_{A'}$.
    For each $\tau\in\{+,-\}$, the set $T_S(A)$ lies in the boundary of $T_{S^\tau}(A')$, so
    $H_f$ meets the boundary of $T_{S^\tau}(A')$. If $T_{S^\tau}(A')\subset H_f^{\sigma}$, then by
    definition $H_f\in b_{S^\tau}(A')$. If this is not the case, $H_f$ intersects $T_{S^\tau}(A')$.

    To conclude the proof, we must prove that $H_f$ cannot intersect both $T_{S^+}(A')$ and
    $T_{S^-}(A')$. If this is the case, $A' \cup \{f\} \in \supp(S^+)$ and
    $A' \cup \{f\} \in \supp(S^-)$. This implies that $A' \cup \{f\} \in \supp(S)$ with
    $|\conn(T_S(A' \cup \{f\}))| \geq 2$, since the two connected components in $T_{S^+}(A'\cup \{f\})$ and
    $T_{S^-}(A'\cup \{f\})$ are not connected by $T_{S}(A \cup \{f\})$. In fact,
    $A \cup \{f\} \notin \supp(S)$, since $H_f \in b_S(A)$. This contradicts the assumption that $S$ is strata-connected.
\end{proof}

\begin{prop}\label{prop:num_edges}
    Let $S$ be a strata-connected subset of the chambers of a hyperplane arrangement
    $\mathcal{H}$ in general position in $\mathbb{R}^d$.
    Then
    \[
        |\partial S| \;\ge\; \sum_{A\in\supp(S)} |b(A)|.
    \]
\end{prop}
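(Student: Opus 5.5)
We argue by induction on the number of hyperplanes intersecting $T_S$, following the same scheme as in Proposition~\ref{prop:num_vertices}. We may assume $T_S$ is connected; for strata-connected $S$ this holds automatically, since $T_S(\emptyset)=T_S$. In the base case no hyperplane meets $T_S$. Then $S$ is a single chamber $C$, $\supp(S)=\{\emptyset\}$, and $b(\emptyset)$ is the set of hyperplanes carrying a facet of $C$. Each such facet gives exactly one boundary edge, so $|\partial S|=|b(\emptyset)|$.

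For the inductive step, choose $H_e$ intersecting $T_S$ and split $S=S^+\sqcup S^-$. First I will check that $S^\pm$ are again strata-connected. This should follow from Lemma~\ref{lem:gluing_graph}: if some $T_{S^+}(A)$ had two components, then either $T_S(A)$ would be disconnected, or $T_S(A)$ would be connected but $\Gamma_{A,e}$ would require at least two gluing components, forcing $|\conn(T_S(A\cup\{e\}))|\ge 2$. Either outcome contradicts strata-connectedness. I expect this to need care and possibly an additional convexity-type argument; if it fails, I would instead induct on a weaker hypothesis that is preserved under cutting. Next, the edges are counted exactly by
\[
|\partial S| = |\partial S^+| + |\partial S^-| - 2\,|\{\text{edges of } G \text{ crossing } H_e \text{ between } S^+ \text{ and } S^-\}|.
\]
The edges crossing $H_e$ inside $S$ are in bijection with the chambers of the arrangement induced on $H_e$ that are contained in $T_S(\{e\})$. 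So it suffices to show
\[
\sum_{A\in\supp(S)}|b_S(A)| \;\le\; \sum_{A\in\supp(S^+)}|b_{S^+}(A)|+\sum_{A\in\supp(S^-)}|b_{S^-}(A)| - 2N_e ,
\]
where $N_e$ denotes the number of these crossing edges.

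To compare the two sides, split $\supp(S)$ into the sets $A$ with $e\notin A$ and the sets $A$ with $e\in A$. For $e\notin A$, every $H_f\in b_S(A)$ with $f\neq e$ bounds $T_{S^+}(A)$ or $T_{S^-}(A)$ (or both), since the boundary of $T_S(A)$ away from $H_e$ comes from one of the two sides. Meanwhile $H_e$ itself bounds both $T_{S^\pm}(A)$ whenever $A\cup\{e\}\in\supp(S)$, and $H_e\notin b_S(A)$ in that case. For $e\in A$, set $A'=A\setminus\{e\}$. By Lemma~\ref{lem:bound_lifts_from_section}, each $H_f\in b_S(A)$ either bounds both $T_{S^\pm}(A')$, or bounds one of them and intersects the other. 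In the second case $A'\cup\{f\}\in\supp(S^\mp)$, so $f$ is charged to the extra term $H_e\in b_{S^\mp}(A'\cup\{f\})$. The plan is thus to build an injective charging: each bounding pair $(A,f)$ of $S$ is sent to a bounding pair of $S^+$ or $S^-$. I would then show that the uncharged pairs number at least $2N_e$, by using the pairs $(A,e)$ with $e\notin A$ and $A\cup\{e\}\in\supp(S)$, which appear twice on the right-hand side.

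The main obstacle is this last accounting. The surplus pairs $(A,e)$ are indexed by the faces of the stratification on $H_e$, namely the sets $A\cup\{e\}\in\supp(S)$. These are counted once per $A$, not once per chamber of the induced arrangement on $H_e$, while $N_e$ counts chambers. To close the gap I will apply the inductive statement to the $(d-1)$-dimensional arrangement induced on $H_e$ with the set $T_S(\{e\})$, together with the count in Proposition~\ref{prop:num_vertices}, which gives $N_e\le|\{A : e\in A\in\supp(S)\}|$ for connected strata. This should show that the surplus terms $2|\{A : e\in A\in\supp(S)\}|$ dominate $2N_e$. Finally I will double-check the case where $H_f$ bounds both sides in Lemma~\ref{lem:bound_lifts_from_section}, to make sure no pair is double-charged.
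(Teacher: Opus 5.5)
\paragraph{There is a gap in the key charging step.}
Your framework matches the paper's: induction on the hyperplanes meeting $T_S$, the identity $|\partial S|=|\partial S^+|+|\partial S^-|-2N_e$, and an injection of the bounding pairs of $S$ into those of $S^\pm$ that leaves the $2N_e$ slots $(H_e,B,\pm)$ with $B\cup\{e\}\in\supp(S)$ unused. Two of your worries are not real obstacles:
\begin{itemize}
\item Your $\Gamma_{A,e}$ argument for strata-connectedness of $S^\pm$ is correct as stated, since two vertices on the same side of a bipartite graph can only be joined by a path with at least two edges.
\item The chambers of the arrangement induced on $H_e$ that lie in $T_S(\{e\})$ form a strata-connected set whose strata are the $T_S(A)$ with $e\in A$. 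So Proposition~\ref{prop:num_vertices} gives $N_e=|\{A: e\in A\in\supp(S)\}|$ exactly, and no induction on $H_e$ is needed.
\end{itemize}

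The gap is the charge you propose for a pair $(A,f)$ with $e\in A$ and $A'=A\setminus\{e\}$, in the case where $H_f$ bounds $T_{S^\sigma}(A')$ and meets $T_{S^{-\sigma}}(A')$. You send it to $H_e\in b_{S^{-\sigma}}(A'\cup\{f\})$. But $H_e$ need not bound that stratum, because the stratum's closure need not reach $H_e$.

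Here is a counterexample in $\R^2$. Take $H_e=\{y=0\}$ with $H_e^+=\{y>0\}$, $H_f=\{x=0\}$, and $H_g=\{y=-1-\epsilon x\}$ with small $\epsilon>0$. Let $S$ consist of the three chambers in $\{x>0\}$ together with the chamber $\{x<0,\ y<0,\ y<-1-\epsilon x\}$.
\begin{itemize}
\item The strata of $S$ are $T_S$ and the rays $\{(x,0):x>0\}$, $\{(0,y):y<-1\}$ and $H_g\cap\{x>0\}$. All are connected, so $S$ is strata-connected.
\item $H_f\in b_S(\{e\})$.
\item $H_f$ bounds $T_{S^+}=\{x>0,y>0\}$ and meets $T_{S^-}$.
\item Yet $T_{S^-}(\{f\})=\{(0,y):y<-1\}$ has closure disjoint from $H_e$, so your target does not exist.
\end{itemize}
The inequality is tight here: $\sum_A|b_S(A)|=3=2+3-2N_e$. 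The pair actually has to be absorbed by $H_f\in b_{S^+}(\emptyset)$.

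\paragraph{When your target exists, it is already occupied.}
$T_S(A'\cup\{f\})$ is connected and meets $H_e^{-\sigma}$. It misses $H_e$, because $A\cup\{f\}\notin\supp(S)$ when $H_f$ bounds $T_S(A)$. Hence it equals $T_{S^{-\sigma}}(A'\cup\{f\})$. Then $H_e\in b_S(A'\cup\{f\})$ is itself a pair of $S$ competing for the same slot.

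\paragraph{The fix used in the paper.}
Charge $(A,f)$ to $(H_f,A',\sigma)$. This slot is free: $H_f$ meets $T_{S^{-\sigma}}(A')\subseteq T_S(A')$, so $H_f\notin b_S(A')$.

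The case you left open, where $H_f$ bounds both $T_{S^\pm}(A')$, is handled by a sign convention:
\begin{itemize}
\item pairs $(B,f)$ with $e\notin B$ that bound both sides go to the $+$ copy;
\item pairs $(A,f)$ with $e\in A$ that bound both sides go to the $-$ copy.
\end{itemize}
With this convention, each slot $(H_f,B,\sigma)$ with $f\neq e$ has at most one preimage. That preimage is determined by $\sigma$ and by whether $H_f\in b_S(B\cup\{e\})$. No pair of $S$ is ever sent to a reserved slot $(H_e,B,\pm)$ with $B\cup\{e\}\in\supp(S)$, which gives exactly the required surplus of $2N_e$.
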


\begin{proof}
    We proceed by induction on the number of hyperplanes intersecting $T_S$.

    We start with the base case.
    If no hyperplane intersects $T_S$, then $S$ consists of a single chamber.
    In this case $\supp(S)=\{\emptyset\}$ and $|\partial S|=|b(\emptyset)|$, so the inequality holds.

    We now continue with the inductive step.
    Assume that $H_e$ intersects $T_S$.
    As before, for each $\sigma\in\{+,-\}$ define
    \[
        S^\sigma := \{ C\in S \mid C\subset H_e^\sigma\}.
    \]
    Let $S/e$ denote the set of edges in the chamber graph of $\mathcal{H}$
    connecting $S^+$ and $S^-$. Equivalently, $S/e$ corresponds to the set of chambers
    of the induced arrangement on $H_e$ corresponding
    to the strata $T_S(\{e\})$.

    The boundary of $S$ decomposes as
    \begin{equation}\label{eq:boundary_decomposition}
        |\partial S| \;=\; |\partial S^+| + |\partial S^-| - 2|S/e|.
    \end{equation}

    Since $S$ is strata-connected, Lemma~\ref{lem:strata_conn} implies that each stratum
    of $S$ splits into at most two strata under restriction to the two open half-spaces $H_e^+$
    and $H_e^-$.
    Consequently, $S^+$ and $S^-$ are also strata-connected.
    Moreover, $S/e$ is strata-connected as a subset of chambers of the affine arrangement on $H_e$.

    By the inductive hypothesis,
    \[
        |\partial S^\sigma| \;\ge\; \sum_{A\in\supp(S^\sigma)} |b_{S^\sigma}(A)|
        \qquad\text{for each }\sigma\in\{+,-\}.
    \]
    By Proposition~\ref{prop:num_vertices}, strata-connectedness of $S/e$ implies
    \[
        |S/e| \;=\; |\supp(S/e)|.
    \]
    Substituting these inequalities into \eqref{eq:boundary_decomposition}, it suffices to prove
    \begin{equation}
        \sum_{A\in\supp(S^+)} |b_{S^+}(A)| + \sum_{A\in\supp(S^-)} |b_{S^-}(A)|
        \;\ge\;
        \sum_{A\in\supp(S)} |b_S(A)| + 2|\supp(S/e)|.
    \end{equation}

    Define the disjoint unions
    \[
        L \;:=\;
        \bigsqcup_{A\in\supp(S^+)} \bigl(b_{S^+}(A)\times\{+\}\bigr)
        \;\sqcup\;
        \bigsqcup_{A\in\supp(S^-)} \bigl(b_{S^-}(A)\times\{-\}\bigr),
    \]
    and
    \[
        R \;:=\;
        \bigsqcup_{A\in\supp(S)} b_S(A) \times \{A \}
        \;\sqcup\;
        \bigl(\supp(S/e)\times\{+,-\}\bigr).
    \]
    By construction,
    \[
        |L| \;=\; \sum_{A\in\supp(S^+)} |b_{S^+}(A)| + \sum_{A\in\supp(S^-)} |b_{S^-}(A)|,
        \quad
        |R| \;=\; \sum_{A\in\supp(S)} |b_S(A)| + 2|\supp(S/e)|.
    \]
    To prove the desired inequality, we now define an injective map $\gamma:R\to L$.

    In the first step we consider terms in $R$ that do not involve $e$.
    Let $H_f\in b_S(A)$ with $e\notin A$.
    \begin{enumerate}[(a)]
        \item
              If $T_S(A)\subset H_e^+$ (resp.\ $H_e^-$), then $T_{S^+}(A)=T_S(A)$
              (resp.\ $T_{S^-}(A)=T_S(A)$) and $H_f$ remains bounding on that side.
              Define $\gamma(H_f,A):=(H_f,A,+)$ (resp.\ $(H_f,A,-)$).

        \item
              If $T_S(A)$ meets both $H_e^+$ and $H_e^-$, then $T_S(A)$ decomposes as
              \[
                  T_S(A) = T_{S^+}(A)\,\cup\,T_{S^-}(A)\,\cup\,T_S(A\cup\{e\}),
              \]
              and $H_f$ bounds at least one of $T_{S^+}(A)$ or $T_{S^-}(A)$.
              If $H_f$ bounds both of them, define $\gamma(H_f,A):=(H_f,A,+)$. If it
              bounds exactly one of them, define  $\gamma(H_f,A):=(H_f,A,\sigma)$,
              where $\sigma$ is the corresponding sign.
    \end{enumerate}

    In the second step we consider terms in $R$ that involve $e$.
    For each $A\in\supp(S/e)$, the hyperplane $H_e$ bounds both
    $T_{S^+}(A\setminus\{e\})$ and $T_{S^-}(A\setminus\{e\})$.
    Define
    \[
        \gamma(A,+) := (H_e, A\setminus\{e\}, +),
        \qquad
        \gamma(A,-) := (H_e, A\setminus\{e\}, -).
    \]

    Finally consider $H_f$ that bounds $T_{S}(A)$ for $A\in\supp(S)$, $e\in A$. Define
    $A' = A \setminus \{e\}$. By
    Lemma~\ref{lem:bound_lifts_from_section}, we have two cases. If
    $H_f$ bounds both $T_{S^+}(A')$ and $T_{S^-}(A')$, define
    $\gamma(H_f,A):=(H_f,A',-)$. On the other hand, if it
    bounds exactly one of them, define $\gamma(H_f,A):=(H_f,A',\sigma)$,
    where $\sigma$ is the corresponding sign. Note that in the latter
    case $H_f$ does not bound $T_S(A')$, since it intersects one of
    $T_{S^+}(A')$ or $T_{S^-}(A')$.

    To conclude the proof, we check that $\gamma$ is injective.
    Write elements of $L$ as triples $(H_f,A,\sigma)$, meaning $H_f\in b_{S^\sigma}(A)$, where
    $\sigma\in\{+,-\}$. We shall prove that for a triple $(f,A,\sigma) \in \Im(\gamma)$,
    one can reconstruct the domain element mapping to it uniquely:
    \begin{itemize}
        \item If $f=e$, then $e \not\in A$, and the pre-image is uniquely determined as $(A \cup \{e\},\sigma) \in \supp(S/e)\times\{+,-\}$.
        \item If $f\neq e$, then the pre-image is of the form $(H_f,A')$ for some $A'\in\supp(S), H_f \in b_S(A')$,
              where $A'$ is either $A$ or $A\cup\{e\}$:
              \begin{itemize}
                  \item If $\sigma=+$, then the pre-image is $(H_f,A)$.
                  \item If $\sigma=-$, then the pre-image depends on whether
                        $A\cup\{e\}\in\supp(S)$ and $H_f \in b_S(A\cup\{e\})$ or not. If the
                        former holds, the pre-image is $(H_f,A\cup\{e\})$.
                        In the latter case, $A\in\supp(S)$ and the pre-image is $(H_f,A)$.
              \end{itemize}
    \end{itemize}
\end{proof}

We can now use Proposition~\ref{prop:num_edges} to obtain explicit lower bounds for
natural classes of subsets of chambers.
In particular, one expects that for a fixed cardinality $|S|$ the smallest edge boundary
should be attained by ``geometrically simple'' sets, and the most basic such class is
given by convex subsets.

A subset $S$ of the chambers of an arrangement $\mathcal{H}$ is called \emph{convex}
if the corresponding thickening $T_S$ is a convex open subset of $\mathbb{R}^d$.
Equivalently, $S$ is convex in the graph-theoretic sense: for any two chambers
$C_1,C_2\in S$, every shortest path between $C_1$ and $C_2$ in the chamber graph
lies entirely in $S$.
The equivalence follows from the fact that chamber graphs of affine hyperplane arrangements
are partial cubes (i.e.\ isometric subgraphs of hypercubes), where convex vertex sets
are precisely intersections of halfspaces; see, for example,
\cite{BjornerZiegler1992} or \cite{BandeltChepoi2008}.
We say that $S$ is \emph{proper} if it is not the set of all chambers.

We have already shown that small sets have large boundary in Proposition~\ref{prop:small},
so we focus on sets of size at least $2^{d-1}$.
For such sets, we can write $|S|=\sum_{i=0}^d \binom{k}{i}$ for some $k\ge d -1$.

\begin{cor}\label{cor:convex_bound}
    Let $S$ be a proper convex subset of the chambers of a hyperplane arrangement
    $\mathcal{H}$ in general position in $\mathbb{R}^d$.
    Assume that
    \[
        |S|=\sum_{i=0}^d \binom{k}{i}
        \qquad\text{for some }k\ge d - 1.
    \]
    Then
    \[
        |\partial S| \;\ge\; \sum_{i=0}^{d-1} \binom{k}{i}.
    \]
    If moreover $T_S$ is bounded in $\mathbb{R}^d$, then
    \[
        |\partial S| \;\ge\; \sum_{i=0}^{d-1} (d-i+1)\binom{k}{i}.
    \]
    If $|S| \leq \sum_{i=0}^d \binom{n-d}{i}$, where $n:=|\mathcal{H}|$, then
    \[
        |\partial S| \;\ge\; \sum_{i=0}^{d-1} (d-i)\binom{k}{i}.
    \]
    The latter always holds if $|S|$ is at most half of the total number of chambers
    and $n\ge \frac{d^2}{\ln 2}+2d$.
\end{cor}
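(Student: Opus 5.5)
The plan is to reduce everything to Proposition~\ref{prop:num_edges} and Proposition~\ref{prop:num_vertices}, and then run a Kruskal--Katona optimization (Theorem~\ref{thm:kruskal_katona}(ii)) over the support.

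\emph{Reduction to the support.} First I would note that a convex $S$ is strata-connected. Each $T_S(A)=T_S\cap L_A$ is the intersection of an open convex set with an affine subspace. Hence it is convex, and in particular connected. Proposition~\ref{prop:num_vertices} then gives $|S|=|\supp(S)|$, and Proposition~\ref{prop:num_edges} gives $|\partial S|\ge\sum_{A\in\supp(S)}|b(A)|$. The support $\mathcal D:=\supp(S)$ is a down-set, since $B\subseteq A$ implies $L_B\supseteq L_A$. By general position, $\mathcal D^{(i)}=\emptyset$ for $i>d$. So it suffices to find lower bounds $|b(A)|\ge w_{|A|}$ with weights that are nonincreasing in $|A|$. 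Theorem~\ref{thm:kruskal_katona}(ii), applied with $r=0$, $m=d$, $x=k\ge d-1$ and $|\mathcal D|=\sum_{i\le d}\binom{k}{i}$, then yields $\sum_A|b(A)|\ge\sum_i w_i\binom{k}{i}$.

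\emph{Local bounds on $|b(A)|$.} For $|A|=i<d$, the stratum $T_S(A)$ is a nonempty relatively open convex polyhedral subset of $L_A$ of dimension $d-i\ge1$, cut out by the induced arrangement.
\begin{enumerate}[(1)]
\item For the first bound I take $w_i=1$ for $i<d$ and $w_d=0$. This needs $T_S(A)\ne L_A$ for $|A|<d$; then the relative boundary is nonempty and lies on some hyperplane, which bounds $T_S(A)$. If instead $T_S(A)=L_A$ for some such $A$, I expect this to force $T_S=\R^d$, contradicting properness. The idea is that every hyperplane meets $T_S$, and convexity together with containing a full affine subspace propagates. This step I would check carefully.
\item If $T_S$ is bounded, each $T_S(A)$ with $|A|<d$ is a $(d-i)$-polytope. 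It therefore has at least $d-i+1$ facets, each lying on a bounding hyperplane, so $w_i=d-i+1$ for $i<d$ and $w_d=0$. These weights are nonincreasing.
\item In the third case I aim for $w_i=d-i$. With $n\ge d$ and general position, the induced arrangement on $L_A$ has normals spanning, so every $T_S(A)$ is pointed. If $T_S(A)\neq L_A$, it has a vertex, and at least $d-i$ facet hyperplanes meet there.
\end{enumerate}

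\emph{The main obstacle.} The delicate point is excluding $T_S(A)=L_A$ in (1) and (3). In (3) this is where the hypothesis $|S|\le\sum_{i\le d}\binom{n-d}{i}$ should enter. If $T_S(A)=L_A$ with $|A|=i$, then all $n-i$ other hyperplanes meet $L_A$ inside $T_S$. So $\supp(S)$ contains $A\cup B$ for every $B$ that is a face-set of the induced arrangement of $n-i$ hyperplanes on $L_A$, which gives $|S|=|\supp(S)|\ge\sum_{j\le d-i}\binom{n-i}{j}$ plus the subsets of $A$. I would try to show this exceeds $\sum_{j\le d}\binom{n-d}{j}$, possibly after also collecting sets $A'\cup B$ for $A'\subseteq A$ via convexity. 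Failing a clean count, I would use Kruskal--Katona (i) to compare.

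\emph{The final clause.} The total number of chambers is $\sum_{i\le d}\binom{n}{i}$. I would show that half of it is at most $\sum_{i\le d}\binom{n-d}{i}$ by comparing termwise. One has
\[
\frac{\binom{n-d}{i}}{\binom{n}{i}}\ge\Bigl(1-\frac{d}{n-d}\Bigr)^{d}\ge e^{-d^2/(n-2d)}\ge\tfrac12,
\]
and the last inequality is exactly the condition $n\ge d^2/\ln2+2d$.
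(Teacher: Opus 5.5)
Your reduction to the support, part (2), and the final clause are sound and follow the paper's route. Your termwise estimate $\binom{n-d}{i}/\binom{n}{i}\ge e^{-d^2/(n-2d)}\ge\frac12$ is a valid substitute for Lemma~\ref{lem:half_chambers_threshold}. The point you flag in (1) is easy and needs no hypothesis. If $L_A\subseteq T_S$ with $|A|<d$, then every facet hyperplane $H_{e_j}$ of $T_S$ either contains $L_A$ (when $e_j\in A$) or meets it in $L_{A\cup\{e_j\}}\neq\emptyset$ by general position. Either way this contradicts $T_S\subseteq H_{e_j}^{\sigma_j}$. So $T_S(A)=L_A$ never occurs for a proper convex $S$, and spending the size hypothesis on excluding it aims at an empty case.

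The genuine gap is in (3). The claim that every $T_S(A)$ is pointed is false. Pointedness is governed by the hyperplanes that actually cut out $T_S$, not by the whole arrangement. Suppose $T_S$ has only $b<d$ facet hyperplanes, e.g.\ $T_S$ is a half-space. Then $|b(A)|=b<d-|A|$ for every $A\in\supp(S)$ with $|A|<d-b$, so the weight $w_i=d-i$ is not available. Kruskal--Katona with $w_i=\min(b,d-i)$ gives strictly less than the target.

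This is exactly where the size hypothesis matters. In $\R^2$, place $H_1$ so that all intersection points of $H_2,\dots,H_n$ lie on one side of it, and let $S$ be the chambers on that side. Then $|S|=|\supp(S)|=\sum_{i\le 2}\binom{n-1}{i}$, so $k=n-1$, while $|\partial S|=n<k+2$.

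The missing idea is a case split on $b$.
\begin{itemize}
\item If $b\ge d$, any $d$ facet normals are independent, so every stratum is pointed and your vertex argument gives $|b(A)|\ge d-|A|$.
\item If $b<d$, then $\supp(S)$ contains all $\binom{n-b}{i}$ subsets of size $i\le d-b$ of the non-facet hyperplanes, each with $|b(A)|=b$. Kruskal--Katona on the layers $i>d-b$ then yields $\sum_{i\le d-b}b\binom{n-b}{i}+\sum_{i=d-b+1}^{d-1}(d-i)\binom{k_b}{i}$.
\end{itemize}
In the second case you still need a global binomial comparison showing that this quantity is smallest at $b=d$, where it equals $\sum_{i<d}(d-i)\binom{k}{i}$. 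This is the paper's Lemma~\ref{prop:binomial_inequality}, which relies on $k\le n-d$, i.e.\ on $|S|\le\sum_{i\le d}\binom{n-d}{i}$. What compensates for the smaller weights on the low layers is that those layers are full. No per-stratum weighting can capture that.
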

\begin{proof}
    Since $S$ is convex and proper, the thickening $T_S$ is a proper convex polyhedron (possibly unbounded) in
    $\mathbb{R}^d$. In particular, $T_S$ is an intersection
    of open halfspaces bounded by hyperplanes of $\mathcal{H}$:
    \begin{equation}\label{eq:Ts_halfspace_representation}
        T_S \;=\; \bigcap_{j=1}^{\ell} H_{e_j}^{\sigma_j},
        \qquad \sigma_j\in\{+,-\},
    \end{equation}
    where each $H_{e_j}\in\mathcal{H}$ and the halfspaces are chosen so that $T_S$ lies
    strictly on the corresponding side.

    We first prove that convexity implies that $S$ is strata-connected.
    Fix $A\in\supp(S)$. The stratum
    \[
        T_S(A)=T_S\cap \bigcap_{e\in A} H_e
    \]
    is the intersection of the convex set $T_S$ with the affine subspace
    $L_A:=\bigcap_{e\in A} H_e$.
    Hence $T_S(A)$ is convex (and open in the relative topology of $L_A$), and therefore
    connected whenever it is nonempty. Thus $|\conn(T_S(A))|=1$ for all $A\in\supp(S)$,
    i.e.\ $S$ is strata-connected in the sense of Def~\ref{def:strata_connected}.
    By Proposition~\ref{prop:num_vertices} we obtain
    \begin{equation}\label{eq:S_equals_support}
        |S| \;=\; |\supp(S)|.
    \end{equation}

    In the second step we establish lower bounds on $|b(A)|$.
    Let $A\in\supp(S)$ with $|A|<d$ and set $r:=\dim(L_A)=d-|A|$.
    Then $T_S(A)$ is a nonempty convex open subset of the $r$-dimensional affine space $L_A$.
    Since $S$ is a proper convex subset of chambers and the arrangement is in general position,
    $T_S(A)\neq L_A$ for all $|A|<d$.
    Using the representation \eqref{eq:Ts_halfspace_representation}, we have
    \[
        T_S(A) \;=\; L_A \cap \bigcap_{j=1}^{\ell} H_{e_j}^{\sigma_j}.
    \]
    Since $T_S(A)\neq L_A$, at least one of the halfspace constraints is active on $L_A$,
    i.e.\ there exists $j$ such that $L_A\cap H_{e_j}$ meets the relative boundary of $T_S(A)$ in $L_A$.
    Therefore $H_{e_j}$ bounds $T_S(A)$ in the sense of Definition~\ref{def:boundary}, and thus
    \begin{equation}\label{eq:bA_at_least_1}
        |b(A)|\ge 1
        \qquad\text{for all }A\in\supp(S)\text{ with }|A|<d.
    \end{equation}

    Assume now that $T_S$ is bounded in $\mathbb{R}^d$.
    Then for every $A\in\supp(S)$ with $|A|<d$, the stratum $T_S(A)\subseteq L_A$ is bounded in $L_A$.
    We claim that in this case
    \begin{equation}\label{eq:bA_bounded_lower}
        |b(A)| \;\ge\; r+1 \;=\; d-|A|+1.
    \end{equation}
    Indeed, $T_S(A)$ is a bounded open convex polyhedron in $L_A$, defined by the halfspaces
    $H_{e_j}^{\sigma_j}\cap L_A$. A bounded convex polyhedron in an $r$-dimensional affine space
    must have at least $r+1$ facets, and each facet is supported by a distinct bounding hyperplane.
    Consequently $T_S(A)$ is bounded by at least $r+1$ hyperplanes from $\mathcal{H}$, proving
    \eqref{eq:bA_bounded_lower}.

    By Proposition~\ref{prop:num_edges},
    \[
        |\partial S| \;\ge\; \sum_{A\in\supp(S)} |b(A)|.
    \]
    Using \eqref{eq:bA_at_least_1}, we obtain
    \begin{equation}\label{eq:convex_reduction_1}
        |\partial S|
        \;\ge\;
        \sum_{\substack{A\in\supp(S)\\ |A|<d}} 1
        \;=\;
        |\{A\in\supp(S)\mid |A|<d\}|.
    \end{equation}

    To bound the right-hand side in terms of $|S|$, note that $\supp(S)$ is a down-set:
    if $A\in\supp(S)$ and $B\subseteq A$, then
    $T_S(A)\subseteq T_S(B)$, so $T_S(B)\neq\emptyset$ and hence $B\in\supp(S)$.
    For $0\le i\le d$, write $\supp(S)^{(i)}:=\{A\in\supp(S)\mid |A|=i\}$.

    By \eqref{eq:S_equals_support} and the assumption on $|S|$ we have
    \[
        |\supp(S)| \;=\; |S|
        \;=\; \sum_{i=0}^d \binom{k}{i}.
    \]
    for some $k\ge d - 1$.
    Applying Theorem~\ref{thm:kruskal_katona}(ii) to the down-set $\supp(S)$ with $r=0$, $m=d$,
    and weights $w_i=1$ for $0\le i\le d-1$ and $w_d=0$, we obtain
    \[
        \sum_{i=0}^{d-1} |\supp(S)^{(i)}|
        \;\ge\;
        \sum_{i=0}^{d-1} \binom{k}{i}.
    \]
    Therefore
    \[
        |\{A\in\supp(S)\mid |A|<d\}|
        \;=\;
        \sum_{i=0}^{d-1} |\supp(S)^{(i)}|
        \;\ge\;
        \sum_{i=0}^{d-1} \binom{k}{i}.
    \]
    Combining with \eqref{eq:convex_reduction_1} proves the first inequality.

    Now assume $T_S$ is bounded. Then \eqref{eq:bA_bounded_lower} yields
    \[
        |\partial S|
        \;\ge\;
        \sum_{\substack{A\in\supp(S)\\ |A|<d}} (d-|A|+1)
        \;=\;
        \sum_{i=0}^{d-1} (d-i+1)\,|\supp(S)^{(i)}|.
    \]
    Applying Theorem~\ref{thm:kruskal_katona}(ii) with weights $w_i=d-i+1$ for $0\le i\le d-1$
    and $w_d=0$ gives
    \[
        |\partial S|
        \;\ge\;
        \sum_{i=0}^{d-1} (d-i+1)\binom{k}{i},
    \]
    as claimed.

    Finally, assume that
    $|S| \leq \sum_{i=0}^d \binom{n-d}{i}$. First assume that the boundary of $T_S$ consists of $b \geq d$ hyperplanes,
    as expressed in equation \ref{eq:Ts_halfspace_representation}.
    Then for every $A\in\supp(S)$ with $|A|<d$, the stratum $T_S(A)$ is a nonempty convex open subset of the affine space $L_A$,
    bounded by at least $d-|A|$ hyperplanes from $\mathcal{H}$.
    Therefore
    \[
        |\partial S|
        \;\ge\;
        \sum_{\substack{A\in\supp(S)\\ |A|<d}} (d-|A|)
        \;=\;
        \sum_{i=0}^{d-1} (d-i)\,|\supp(S)^{(i)}|.
    \]
    Applying Theorem~\ref{thm:kruskal_katona}(ii) with weights $w_i=d-i$ for $0\le i\le d-1$
    and $w_d=0$ gives
    \[
        |\partial S|
        \;\ge\;
        \sum_{i=0}^{d-1} (d-i)\binom{k}{i}.
    \]

    Now assume that $b < d$ and write $\mathcal{H}_b$ for the set of bounding hyperplanes.
    Since the arrangement is in general position,
    any $L_A := \bigcap_{f\in A} H_f$ for $A \subset \mathcal{H} \setminus \mathcal{H}_b$, $|A| \leq d-b$,
    intersects $T_S$. For each such $A$, $T_S(A)$ is bounded in $L_A$ by
    all of the hyperplanes $\mathcal{H}_b$. On the other hand, if $A\in\supp(S)$ with $|A| > d-b$,
    then $|b(A)| \geq d-|A|$.
    As above we can write
    \[
        |\supp(S)| \;=\; |S|
        \;=\; \sum_{i=0}^d |\supp(S)^{(i)}|,
    \]
    where now we have $|\supp(S)^{(i)}| = {n - b \choose i}$ for $0\leq i \leq d-b$.
    On the other hand,
    \[
        |\partial S|
        \;\ge\;
        \sum_{\substack{A\in\supp(S)\\ |A|<d}} \min(d-|A|, b)
        \;=\;
        \sum_{i=0}^{d-b} b\,|\supp(S)^{(i)}| + \sum_{i=d-b+1}^{d-1} (d-i)\,|\supp(S)^{(i)}|.
    \]

    Let us write $\sum_{i=d-b+1}^d |\supp(S)^{(i)}| = \sum_{i=d-b+1}^d \binom{k_b}{i}$, for some $k_b\geq d-1$.
    Applying Theorem~\ref{thm:kruskal_katona}(ii) to the down-set $\supp(S)$ with $r=d-b+1$, $m=d$,
    and weights $w_i=d-i$ for $d-b+1\le i\le d-1$ and $w_d=0$, we get
    \[
        |\partial S|
        \;\ge\;
        \sum_{i=0}^{d-b} b\,{\binom{n-b}{i}} + \sum_{i=d-b+1}^{d-1} (d-i)\,\binom{k_b}{i}.
    \]

    By Lemma~\ref{prop:binomial_inequality}, the larger the value of $b$, the weaker
    the lower bound. In particular, for $b=d$ we get
    \[
        |\partial S|
        \;\ge\;
        \sum_{i=0}^{d-1} (d-i)\,\binom{k}{i},
    \]
    proving the last inequality in the statement of the corollary. Finally, the condition
    $n\ge \frac{d^2}{\ln 2}+2d$ ensures, by Lemma~\ref{lem:half_chambers_threshold}, that
    $\sum_{i=0}^{d}\binom{n-d}{i}\;\ge\;\frac{1}{2}\sum_{i=0}^{d}\binom{n}{i}$. If $|S|$ is
    at most half of the total number of chambers, then $|S| \leq \sum_{i=0}^d \binom{n-d}{i}$, and the last inequality applies.
\end{proof}

Note that the crucial property of convex sets used in the proof above is that
their strata are connected and each is bounded by at least one hyperplane.
Therefore, the same argument can also be applied to other sets $S$ with this
property. For example, consider a hyperplane arrangement in $\R^d$ in which
each hyperplane is orthogonal to one of the coordinate axes. Then the chamber
graph is isomorphic to a grid graph. This is, in fact, one of the few chamber
graphs that has already been studied in the literature; see~\cite{BollobasLeader1991Grid,BollobasLeader1991Compressions}.
For the grid graph, the first step is to ``push down'' a set $S$ along each
of the $d$ coordinate directions. This operation does not increase the size of
$\partial S$ but makes the set topologically simple. Then, for every
$A\in\supp(S)$, the stratum $T_S(A)$ is, in each of the $d-|A|$ directions,
either bounded by a hyperplane or equal to the whole intersection
$\cap_{e\in A} H_e$. Applying Proposition~\ref{prop:num_edges}, one can derive
asymptotically similar bounds to those given in~\cite{BollobasLeader1991Grid,BollobasLeader1991Compressions}. Unfortunately, it is
not clear how a ``push-down'' operation should be defined, or whether it can be
defined at all, for an arbitrary hyperplane arrangement.

It is counterintuitive that sets with disconnected strata
would have fewer boundary edges than convex sets. Hence we propose
the following conjecture.

\begin{conj}\label{conj:1}
    Let $S$ be an arbitrary subset of at most half of the chambers of a hyperplane arrangement $\mathcal{H}$ in
    general position in $\R^d$. Let $|S| = \sum_{i=0}^d {k \choose  i}$
    for some $k \ge d-1$. Then
    \[
        |\partial S| \geq \sum_{i=0}^{d-1} {k \choose  i}.
    \]
\end{conj}

One could also formulate a weaker conjecture concerning only the asymptotic size of the boundary.
In particular, if the above conjecture is true, then for a set $S$ of chambers in
$\R^d$ with fixed $d$, it follows that $|\partial S|$ is in $\Omega(|S|^\frac{d-1}{d})$.
We prove Conjecture~\ref{conj:1} in $\R^2$, and our main low-dimensional result is that the
asymptotic conjecture holds in $\R^3$.

\begin{figure}[ht]
    \centering
    \begin{subfigure}[b]{0.38\textwidth}
        \centering
        \includegraphics[width=\textwidth]{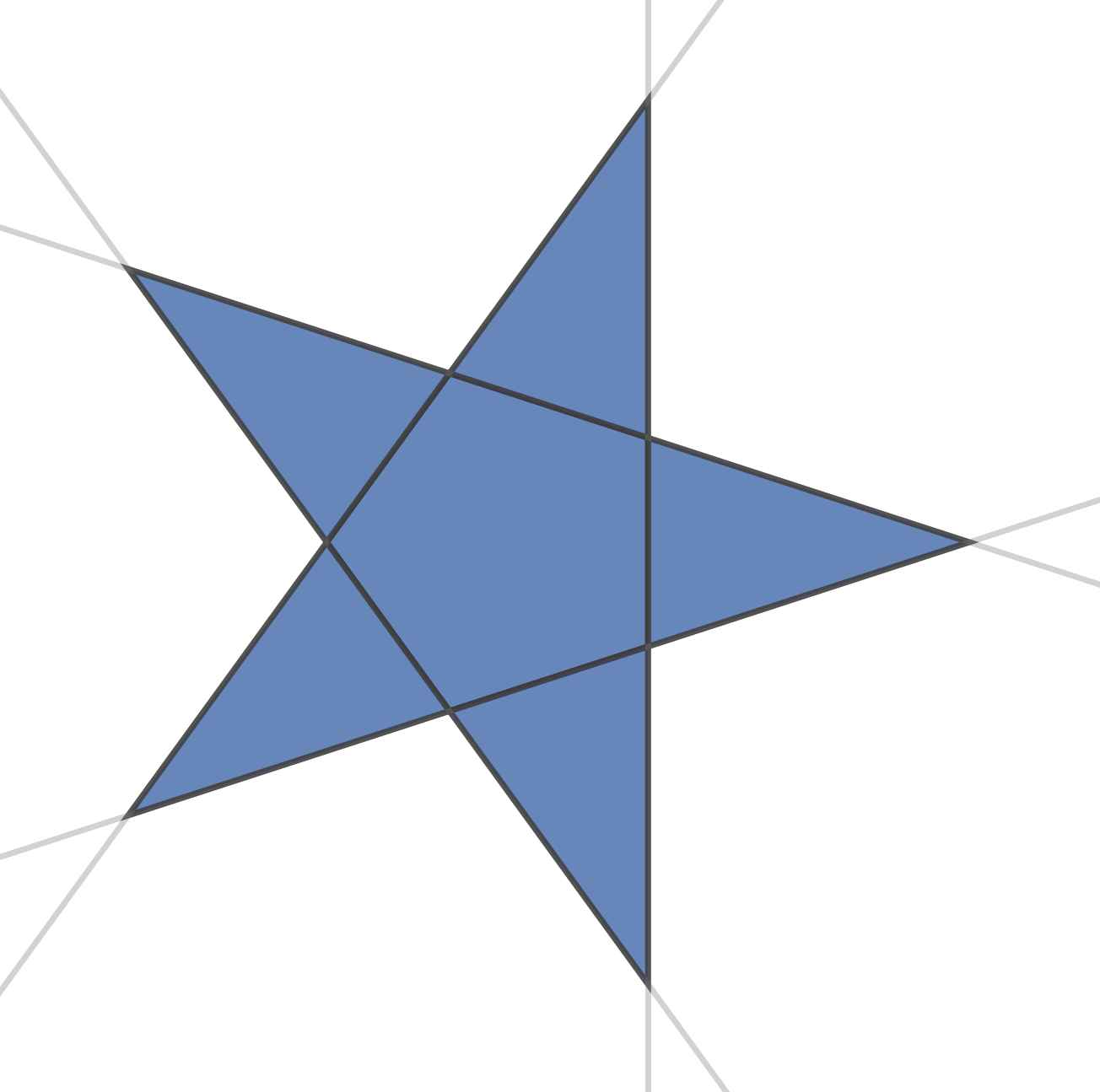}
        \caption{Star polytope.}
        \label{fig:star2}
    \end{subfigure}
    \hspace{0.03\textwidth}
    \begin{subfigure}[b]{0.38\textwidth}
        \centering
        \includegraphics[width=\textwidth]{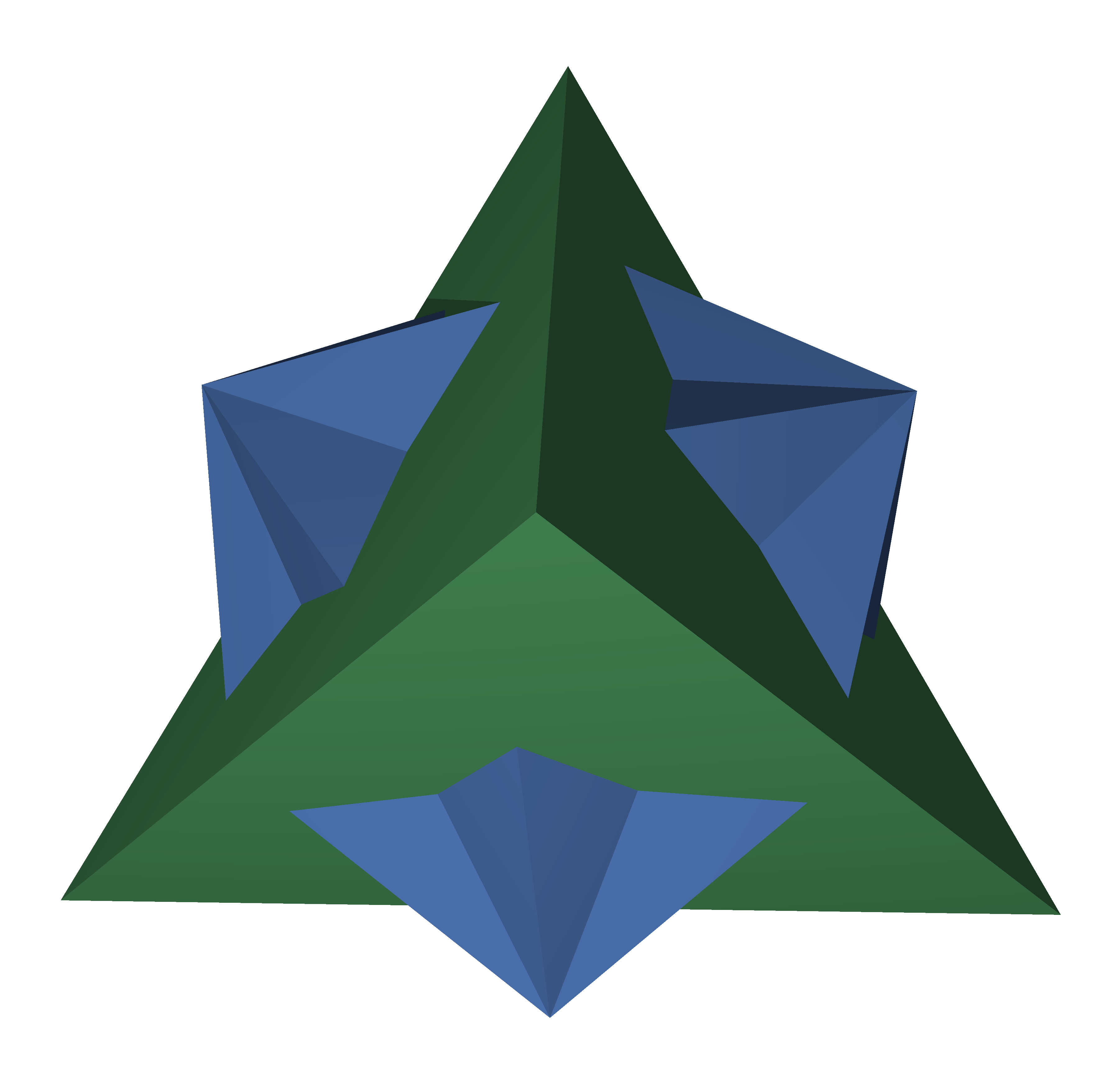}
        \caption{Tetrahedron.}
        \label{fig:tetra}
    \end{subfigure}
    \caption{Example polytopes.}
\end{figure}

Note that if there exists a set $S$ whose boundary has, possibly significantly,
fewer edges than predicted by the conjecture, then one of the following must hold:
\begin{itemize}
    \item Either it must have highly nontrivial stratum structure, with many disconnected strata, so as to
          deviate strongly from the conclusion of Proposition~\ref{prop:num_edges}.
          In fact, there exist polytopes even in $\R^3$ with surprising
          properties, for example having far fewer faces than their genus would suggest
          \cite{Ziegler2008HighGenus}.
          Below we show that at least in $\R^3$ such polytopes do not arise from
          set of chambers, since we prove that the conjecture holds
          asymptotically.
    \item On the other hand, even a strata-connected subset $S$ of
          chambers in $\R^d$ can have
          a stratification with $|b(A)| = 0$ for $|A| < d$. For example,
          consider the two-dimensional example from Figure~\ref{fig:star2},
          where $|b(\emptyset)| = 0$, or the three-dimensional example from Figure~\ref{fig:tetra},
          where $|b(\emptyset)| = 0$ and $|b(A)| = 0$ for many $A \in \supp(S)$
          (to be precise, this example does not come from an arrangement in general
          position, but one could add a sufficiently small random perturbation).
          Nevertheless, the examples do not negate the conjecture, since they
          have many 1-dimensional strata $T_S(A)$
          for which $|b(A)| = 2$, more than in a convex set attaining the conjectured
          lower bound.
\end{itemize}


\section{Low dimensions}

In the following results we establish the conjectured properties in low dimensions.

\begin{thm}\label{thm:R2_conjecture}
    Let $S$ be an arbitrary subset of at most half of the chambers of a hyperplane arrangement $\mathcal H$
    in general position in $\R^2$. Assume
    \[
        |S|=\sum_{i=0}^2 \binom{k}{i}
    \]
    for some $k\ge 1$. Then
    \[
        |\partial S|\ge \sum_{i=0}^1 \binom{k}{i}=k+1.
    \]
\end{thm}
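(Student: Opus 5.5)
The plan is to prove the contrapositive. Put $B:=|\partial S|$. I will aim to show
\[
|S|\le \sum_{i=0}^2\binom{B-1}{i},
\]
or, failing that, to apply the same inequality to the complement when $S$ is unbounded. Since $\sum_{i\le 2}\binom{x}{i}$ is increasing in $x$, the bound $B\le k$ would then force $|S|<\sum_{i=0}^2\binom{k}{i}$, a contradiction.

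\emph{Upper bound on $|S|$.} By Proposition~\ref{prop:num_vertices},
\[
|S|\le r+\ell+v,
\]
where $r=|\conn(T_S(\emptyset))|$, $\ell=\sum_e|\conn(T_S(\{e\}))|$ and $v=|\supp(S)^{(2)}|$. In the plane every $T_S(\{e,f\})$ is a single point, so $v$ just counts vertices of the arrangement lying in $T_S$. If $m$ lines meet $T_S$, then trivially $v\le\binom{m}{2}$, and also $m\le \ell$. So it suffices to bound $r$, $\ell$ and $m$ in terms of $B$.

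\emph{Bounding $\ell$ by $B$.} Each component of $T_S(\{e\})$ is an open segment or ray of $H_e$, and each finite end of it is a vertex $p\in\Sigma_S$. I will run a local case analysis at a vertex $p=H_e\cap H_f$ on $\Sigma_S$, according to which of the four chambers around $p$ lie in $S$.
\begin{itemize}
\item If three of them lie in $S$, then two half-segments of $T_S$ end at $p$, and exactly two boundary edges of $\partial S$ are incident to $p$.
\item If two adjacent ones lie in $S$, then one half-segment ends at $p$, and two boundary edges are incident to $p$.
\item If one, or two opposite ones, lie in $S$, then no stratum ends at $p$.
\end{itemize}
Each boundary edge, viewed as a segment of $\Sigma_S$, has at most two endpoints. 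Double counting therefore gives
\[
\#\{\text{finite ends of 1-dim strata}\}\le 2B.
\]
Moreover, each infinite end of a $1$-dimensional stratum is flanked by unbounded boundary edges, unless $T_S$ contains a full neighbourhood of infinity in that direction. I expect this to give $\ell\le B-1$ whenever $S$ is a proper subset. A single chamber bounded by $B$ lines is the extremal example, with $\ell=0$ and $r=1$.

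\emph{Bounding $r$ and assembling.} Each component of $T_S$ is bounded by at least one boundary edge, and a component with no $1$-dimensional stratum is a single chamber with at least $2$ boundary edges. I want to combine these facts into the single inequality
\[
r+\ell\le 1+m',\qquad m'\le B-1,
\]
where $m'$ is the number of lines carrying a $1$-dimensional stratum. Together with $v\le\binom{m'}{2}$ this yields
\[
|S|\le 1+(B-1)+\binom{B-1}{2}.
\]

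\emph{Main obstacle.} The hardest step is the sharp constant $B-1$ rather than, say, $2B$. The endpoint count loses a factor of two: in a convex polygon each boundary edge is incident to two corners, but each corner is the end of a stratum only once per line. So I will count per line instead. On a line $H_e$ that meets $T_S$, consecutive components of $T_S(\{e\})$ are separated by gaps, and each gap ends contribute boundary edges lying on other lines. I will try to charge each line $H_e$ meeting $T_S$ to a distinct boundary edge. The natural choice is an edge adjacent to the "last" end of $H_e$ in a fixed direction.

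Unbounded configurations are where the hypothesis $|S|\le\frac12\#\text{chambers}$ enters. If $T_S$ contains all but boundedly many directions at infinity, I will replace $S$ by its complement $S^c$, which has the same boundary and is at least as large. I will then run the argument on $S^c$: it is "bounded-like", so the infinite-end issue disappears, and $|S|\le|S^c|$ transfers the bound back to $S$.
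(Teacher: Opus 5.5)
There is a genuine gap at the assembly step, and that step carries the whole difficulty of the theorem. The inequality $r+\ell\le 1+m'$ you aim for is false in general. Since $r\ge 1$ and every line counted in $m'$ carries at least one component, one always has $r+\ell\ge 1+m'$, with equality only if $T_S$ is connected and every line meets it in a single interval. Two non-adjacent chambers give $r=2$, $\ell=m'=0$. A connected U-shaped set in which some line meets $T_S$ in two separate segments gives $\ell>m'$. For the same reason, your per-line charging can at best give $m'\le B-1$. That does not control $|S|\le r+\ell+v$, because $\ell$ counts components, not lines. Your endpoint double count only gives $\#\{\text{finite ends}\}\le 2B$, so the hoped-for $\ell\le B-1$ is never actually proved.

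What is missing is a charging of \emph{components} that yields $r+\ell\le B$, i.e.\ the paper's $N_0+N_1\le|\partial S|$ for connected $T_S$. The paper splits $\Sigma_S\cap H_f$ into maximal boundary intervals $\mathcal I_f$ and lets $X_f$ be the arrangement vertices in their relative interiors, so that $|\partial S|=\sum_f(|\mathcal I_f|+|X_f|)$.
\begin{itemize}
\item A component of $T_S(\{f\})$ with an endpoint at a straight-through vertex (your ``two adjacent chambers'' case) is charged to that vertex. That vertex lies in some $X_g$ and ends no other component.
\item If both ends are reflex corners (your ``three chambers'' case), the boundary continues along $H_f$ beyond each end as an element of $\mathcal I_f$. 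Counting alternating chains of components and intervals along $H_f$ then gives $|\mathcal I_f|\ge|U_f|$, where $U_f$ is the set of these uncharged components on $H_f$.
\item The chain count fails only if a chain runs from infinity to infinity. That forces $H_f$ to be fully supported, i.e.\ $H_f\cap T_S\neq\emptyset$ and $H_f\subseteq\overline{T_S}$.
\item One extra boundary object pays for $N_0=1$.
\end{itemize}
So the trigger for passing to $S^c$ should be a fully supported line. That is not the same as $T_S$ containing most directions at infinity, and you must then check that $S^c$ has no fully supported line.

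With $r+\ell\le B$ in hand, your finish does close, and it is a small genuine variation on the paper. Since $m'\le\ell\le B-r\le B-1$, you get $|S|\le r+\ell+\binom{m'}{2}\le B+\binom{B-1}{2}=\sum_{i=0}^2\binom{B-1}{i}$. Treat components of $T_S$ separately and recombine them via the superadditivity of $x\mapsto\sum_{i\le 2}\binom{x-1}{i}$. This replaces the paper's Kruskal--Katona bound $N_2\le\binom{k}{2}$ by the elementary bound $v\le\binom{m'}{2}$.
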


\begin{proof}
    Since the target bound is concave (as a function of $|S|$) and $|\partial S|$ is additive over the
    connected components of $T_S$, it suffices to prove the theorem under the additional assumption
    that $T_S$ is connected.

    For $0\le i\le 2$, let
    \[
        N_i:=\sum_{\substack{A\in \supp(S)\\ |A|=i}} |\conn(T_S(A))|.
    \]
    Then $N_0=1$, and Proposition~\ref{prop:num_vertices} gives
    \[
        |S|\le N_0+N_1+N_2.
    \]
    Since every $2$-stratum is a point, we have
    \[
        N_2=|\supp(S)^{(2)}|.
    \]
    Moreover, $\supp(S)$ is a down-set and
    \[
        |\supp(S)|\le |S|.
    \]
    Therefore Theorem~\ref{thm:kruskal_katona}(i) with $m=2$ yields
    \[
        N_2 \le \binom{k}{2}.
    \]
    Hence
    \begin{equation}\label{eq:R2_N0N1_lb}
        N_0+N_1 \ge |S|-N_2 \ge |S| -\binom{k}{2}=k+1.
    \end{equation}

    We say that a line $H_f$ is \emph{fully supported} in $S$ if
    \[
        H_f \cap T_S \neq \emptyset \quad \text{and} \quad H_f\subseteq \overline{T_S}.
    \]

    Assume first that no line is fully supported in $S$. For a line $H_f$, let $\mathcal I_f$ be the set of
    connected components of $\Sigma_S\cap H_f$ having nonempty relative interior in $H_f$, and let
    $X_f$ be the set of intersection points of the arrangement that lie in the relative interior
    of one of these components, see Figure~\ref{fig:If_Xf}. Hence
    \[
        |\partial S|=\sum_f \bigl(|\mathcal I_f|+|X_f|\bigr).
    \]

    \begin{figure}[ht]
        \centering
        \includegraphics[width=0.55\textwidth]{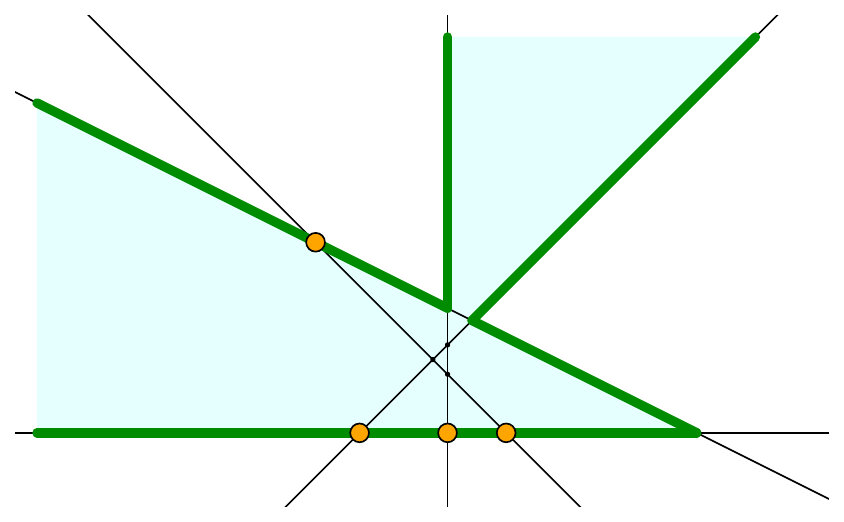}
        \caption{An arrangement in general position with $|S|=8$ (shaded) chambers.
            The boundary $\Sigma_S=\partial T_S$ decomposes along each line $H_f$
            into intervals $\mathcal I_f$ (green) and intersection points $X_f$ (orange).
            In this example $\sum_f|\mathcal I_f|=5$, $\sum_f|X_f|=4$, giving $|\partial S|=9$.
            Three components of the $1$-strata are covered by points, and the remaining component
            is covered by an interval. The four unbounded intervals remain for the 0-stratum.
        }
        \label{fig:If_Xf}
    \end{figure}

    We next cover the connected components of the $1$-strata. We say that a component
    $K\in \conn(T_S(\{f\}))$ is \emph{covered by points} if one of its endpoints belongs to
    $X_f$ for some line $H_f$. Every point of every $X_f$ can be an endpoint of at most one such
    component, so the total number of components covered by points is at most
    \[
        \sum_{f\in[n]} |X_f|.
    \]

    Fix a line $H_f$ and let $U_f\subseteq \conn(T_S(\{f\}))$ be the set of components not covered
    by points. Order them along $H_f$ as
    \[
        K_1,\dots,K_r .
    \]
    Between two consecutive components $K_j$ and $K_{j+1}$ there must be a boundary interval on
    $H_f$; otherwise the gap between them contains no boundary interval, and then the two endpoints
    of that gap belong to some sets $X_g$, so both $K_j$ and $K_{j+1}$ were already covered.
    Thus each such gap contributes an element of $\mathcal I_f$.

    Looking only at the components in $U_f$ and the intervals of $\mathcal I_f$ adjacent to them,
    we obtain a collection of alternating chains along $H_f$. Since $H_f$ is not fully supported, none of
    these chains can begin and end with components of $U_f$. Therefore the number of intervals in
    $\mathcal I_f$ adjacent to $U_f$ is at least $|U_f|$, and hence
    \[
        |\mathcal I_f|\ge |U_f|.
    \]

    Summing over all $f$, we see that the sets $X_f$ and $\mathcal I_f$ together cover all connected
    components of all $1$-strata. If $T_S$ is bounded, then each $\Sigma_S\cap H_f$ contributes
    at least one additional counted object.
    If $T_S$ is unbounded, then $\Sigma_S$ is unbounded. This implies that
    there exists a line $H_f$ such that $\Sigma_S\cap H_f$ has
    an unbounded component. Since $H_f$ is not fully supported,
    $\Sigma_S\cap H_f$ contributes at least
    one additional counted object.

    In both cases, an additional counted object can be assigned to the unique component of
    the $0$-stratum $T_S(\emptyset)=T_S$. Therefore
    \[
        |\partial S|\ge N_0+N_1.
    \]
    Together with \eqref{eq:R2_N0N1_lb} and $N_0=1$, this yields
    \[
        |\partial S|\ge k+1.
    \]

    It remains to consider the case when some line $H_e$ of $S$ is fully supported. Then it is easy to see
    that the complement $S^c$ of $S$ cannot have a line $H_f$ with $H_f\subseteq \overline{T_{S^c}}$, besides $H_e$.
    This implies that $S^c$ has no fully supported line, and the above arguments apply to $S^c$.
    Since $|\partial S| = |\partial S^c|$ and  $|S^c| \geq   |S|$,
    the assertion of the theorem follows.
\end{proof}

We remark that the assumption that $\mathcal H$ is in general position is essential in
Theorem~\ref{thm:R2_conjecture}. Choose points $p_1,\dots,p_k$ on a circle in cyclic order so that
no three diagonals of the convex polygon $P:=\conv\{p_1,\dots,p_k\}$ meet in the interior of $P$,
and let $\mathcal H$ consist of all lines through pairs $p_i,p_j$.
Then
\[
    |\mathcal H|=\binom{k}{2}=\Theta(k^2).
\]
Let $S$ be the set of chambers of $\mathcal H$ contained in $P$.
These chambers are exactly the regions into which the diagonals of the convex $k$--gon $P$
divide $P$, so
\[
    |S|=\binom{k}{4}+\binom{k}{2}-k+1=\Theta(k^4).
\]
On the other hand, each side of $P$ lies on a line of $\mathcal H$ and contains no intersection
point of any other line of the arrangement in its relative interior. Hence each side of $P$
is a facet shared by exactly one chamber of $S$ and one chamber outside $P$, and therefore
\[
    |\partial S|=k.
\]
Thus without the general-position assumption one can have
\[
    |\partial S|=\Theta(|S|^{1/4})
\]
already in dimension $2$.

\begin{thm}\label{thm:R3_asymp}
    Let $S$ be a subset of chambers of a hyperplane arrangement $\mathcal H$ in general position in $\R^3$.
    Assume
    \[
        |S|=\sum_{i=0}^3 \binom{k}{i}
    \]
    for some $k\ge 2$, and that $|S|\le D\cdot |\mathcal R(\mathcal H)|$ for a fixed constant $D\in(0,1)$, where
    $\mathcal R(\mathcal H)$ denotes the set of all chambers.
    Then there exists a constant $C=C(D)>0$ such that
    \[
        |\partial S|\;\ge\; C \sum_{\substack{A\in\supp(S)\\ 0\le |A|\le 2}}|\conn(T_S(A))|.
    \]
    In particular,
    \[
        |\partial S|\;\ge\; C\sum_{i=0}^2\binom{k}{i}.
    \]
    Hence $|\partial S|=\Omega( |S|^{2/3})$.
\end{thm}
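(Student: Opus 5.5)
The plan is to mirror the proof of Theorem~\ref{thm:R2_conjecture} one dimension higher, with a charging scheme that loses only constant factors. Write $N_i:=\sum_{A\in\supp(S),|A|=i}|\conn(T_S(A))|$ for $0\le i\le 3$, so that $N_3=|\supp(S)^{(3)}|$. The ``in particular'' part is then formal. Proposition~\ref{prop:num_vertices} gives $|\supp(S)|\le|S|$, and Theorem~\ref{thm:kruskal_katona}(i) with $m=3$ gives $N_3\le\binom{k}{3}$. Therefore
\[
N_0+N_1+N_2\ \ge\ |S|-N_3\ \ge\ \sum_{i=0}^2\binom{k}{i},
\]
and $\sum_{i\le 2}\binom{k}{i}=\Theta(k^2)=\Theta(|S|^{2/3})$. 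So the whole content is the first inequality, $|\partial S|\ge C(N_0+N_1+N_2)$. To prove it, I would count boundary edges as facets of $\Sigma_S$: each facet is a $2$-cell of the induced arrangement on some $H_f$ that lies in $\Sigma_S$. I would then assign every connected component of every $0$-, $1$- and $2$-stratum to such a facet, with each facet receiving $O(1)$ components.

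\emph{$2$-strata and $1$-strata.} These are charged locally, by running the planar argument of Theorem~\ref{thm:R2_conjecture} inside each plane $H_f$. The set $T_S\cap H_f=T_S(\{f\})$ is a union of chambers of the induced general-position line arrangement on $H_f$. Its complement in $H_f$ consists of pieces of $\Sigma_S$, which are the boundary facets lying on $H_f$. Components of $T_S(\{f,g\})$ are components of $1$-strata of $T_S(\{f\})$ inside $H_f$. Components of $T_S(\{f,g,h\})$ are points, and they are counted in $N_3$, so they need not be charged. The planar covering argument (intervals $\mathcal I$ and points $X$ along each line $H_g\cap H_f$) charges each component of $T_S(\{f,g\})$ either to a boundary segment of $\Sigma_S\cap H_f$ or to a vertex of that planar picture. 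Every such boundary segment or vertex lies on a facet of $\Sigma_S$ in $H_f$ or in a neighbouring plane. Since every vertex of the arrangement in $\R^3$ is incident to a bounded number of facets, the charge per facet is $O(1)$. Each component of $T_S(\{f,g\})$ is seen from the two planes $H_f$ and $H_g$, which costs only a factor $2$. Each component of $T_S(\{f\})$ is then charged to some facet of $\Sigma_S$ on $H_f$ adjacent to it; at least one exists unless $T_S(\{f\})=H_f$.

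\emph{Degenerate cases.} The exceptional situations are those in which a whole line $H_f\cap H_g$, or a whole plane $H_f$, lies in $\overline{T_S}$. This is where the hypothesis $|S|\le D|\mathcal R(\mathcal H)|$ enters, replacing the passage to the complement $S^c$ used in the planar theorem. A plane contained in $\overline{T_S}$ contributes a whole planar arrangement to the strata. I would argue that at most $O_D(1)$ planes can be fully supported, since otherwise $S$, or by the same argument $S^c$, would contain a constant fraction of chambers near every such plane. The components living on these planes can then be charged against facets where $T_S$ meets a non-supported plane. Finally $N_0$ is charged to any facet of the corresponding component of $T_S$, which exists because $S$ is proper.

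I expect the main obstacle to be the bounded-charge claim in dimension $3$. A single facet of $\Sigma_S$ can border many components of $1$-strata on different lines, and components can be ``covered by points'' in nonlocal ways. To control this I would first try a planar Euler-characteristic count on $\Sigma_S\cap H_f$, which is a planar subdivision whose cells are facets. This should give the number of faces, edges and vertices linearly in terms of the facets, with degrees bounded by general position, and it would absorb the genus issues mentioned after Conjecture~\ref{conj:1}.
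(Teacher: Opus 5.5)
The reduction of the ``in particular'' part is correct and matches the paper. Kruskal--Katona together with Proposition~\ref{prop:num_vertices} gives $N_3\le\binom{k}{3}$, so everything rests on $|\partial S|\ge C(N_0+N_1+N_2)$. Charging $N_0$ to one boundary facet per component of $T_S$ is also what the paper does.

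The gap is the central step, the $O(1)$ load per facet. Your justification, that each vertex is incident with boundedly many facets, bounds the wrong quantity. A boundary facet $F\subset H_g$ is a chamber of the line arrangement induced on $H_g$ and can have up to $n-1$ sides, lying on distinct lines $H_g\cap H_f$. The planar covering argument run in each of these planes $H_f$ may charge a side or vertex of $F$, and nothing in your scheme stops $F$ from receiving $\Theta(n)$ charges. The Euler-characteristic fallback does not help: a family of faces of a line arrangement can have far more boundary vertices than faces (a single $m$-gon has $m$).

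The planar reduction is also inaccurate as stated. $H_f\setminus T_S(\{f\})$ is not made up of boundary facets on $H_f$, and a component of $T_S(\{f\})$ need not be adjacent to any such facet. For $S=\{C_1,C_2\}$ with $C_1,C_2$ sharing a facet $s\subset H_f$, one has $T_S(\{f\})=s$, while all boundary facets lie on other planes.

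The missing idea is to charge vertices rather than facets, with geometric weights. Each boundary facet spreads its unit mass over its vertices according to the probability of being the lowest vertex in a uniformly random direction, which is at least the exterior angle divided by $2\pi$. The vertex charges then sum to exactly $|\partial S|$. Each vertex meets only boundedly many $1$- and $2$-strata components, so facet size becomes irrelevant. A case analysis of the $2\times2\times2$ cube of chambers at a vertex shows that every endpoint of a $1$-stratum component receives charge at least $\frac12$; for instance, two neighbouring boundary sectors in one plane have exterior angles summing to $\pi$. For a bounded $2$-stratum component $s$, either some boundary vertex already receives at least $\frac12$, or $s$ is a convex polygon whose vertices each receive at least $r_s(p)/\pi$. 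In the latter case the angle sum $(|V(s)|-2)\pi$ gives total charge at least $1$.

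Your treatment of the degenerate components also fails. It is false that only $O_D(1)$ planes can be fully supported. Let $S$ be all chambers whose closure meets one of $\varepsilon n$ fixed planes. Then $|S|=O(\varepsilon n^3)\le D|\mathcal R(\mathcal H)|$ for small $\varepsilon$, yet $\varepsilon n$ whole planes, and about $\varepsilon n^2$ whole lines, lie in $T_S$. Whole lines $H_e\cap H_f\subset T_S$ have no endpoints, so no local charging applies to them, and your proposal does not treat them at all.

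The paper does not charge full components; it bounds how many there can be. A full line puts $n-2$ triples into $\supp(S)^{(3)}$, and a full plane puts in $\binom{n-1}{2}$. Hence $(n-2)L\le 3\binom{k}{3}$ and $\binom{n-1}{2}M\le 3\binom{k}{3}$. The density hypothesis gives $\frac{k-2}{n-2}\le c_D<1$, so $L+M\le c_D\sum_{i\le 2}\binom{k}{i}\le c_D(N_0+N_1+N_2)$. Discarding the full components therefore costs only the factor $1-c_D$, and this is where $|S|\le D|\mathcal R(\mathcal H)|$ is really used, not via any passage to $S^c$.
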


\begin{proof}
    Let $T_S$ and $\Sigma_S=\partial T_S$ be as in Definitions~\ref{def:topological}
    and~\ref{def:boundary}.

    We first place a probabilistic charge on the boundary vertices.
    Every edge of the chamber graph crossing $\partial S$ corresponds to a $2$--dimensional face of $\Sigma_S$
    contained in some hyperplane of $\mathcal H$. Let $\mathcal F$ denote the set of these faces
    (so $|\mathcal F|=|\partial S|$). Let
    \[
        P:=\{H_{e_1}\cap H_{e_2}\cap H_{e_3}:e_1,e_2,e_3\in[n]\}\cap \Sigma_S
    \]
    be the set of vertices of the polyhedral complex on $\Sigma_S$.
    In general position, each $p\in P$ lies on exactly three hyperplanes of $\mathcal H$.

    Fix $F\in\mathcal F$. Choose a random direction $v\in S^2$ uniformly. If the linear functional
    $x\mapsto x\cdot v$ attains its minimum on $F$ at a vertex, choose one such minimizing vertex
    and denote it by $\low_v(F)$. Otherwise $F$ is unbounded, and we choose $\low_v(F)$ uniformly
    among the two vertices of $P \cap \partial F$ incident with the two unbounded edges of $F$.
    For $p\in P$ define
    \[
        \tau(p):=\sum_{F\in\mathcal F:\,p\in \partial F}\Pr[\low_v(F)=p].
    \]
    By construction,
    \begin{equation}
        \sum_{p\in P}\tau(p)=|\mathcal F|=|\partial S|.
    \end{equation}

    \begin{claim}\label{claim:angle_formula_R3}
        Let $F\in\mathcal F$ lie in an affine plane $H$, and let $p$ be a vertex of $F$.
        Write $\alpha_F(p):=\pi-\angle_F(p)$ for the exterior angle of $F$ at $p$ in $H$.
        Then
        \[
            \Pr[\low_v(F)=p]\ge \frac{\alpha_F(p)}{2\pi}.
        \]
        If $F$ is bounded, equality holds.
    \end{claim}

    \begin{proof}
        Project $v$ orthogonally to the direction circle in $H$. By rotational symmetry, the
        projected direction is uniform on $S^1\subset H$. The set of projected directions for which
        $p$ is an actual minimizer of $x\mapsto x\cdot v$ on $F$ is precisely the normal cone of $F$
        at $p$, whose angular measure is $\alpha_F(p)$. Thus these directions already contribute
        $\alpha_F(p)/(2\pi)$ to $\Pr[\low_v(F)=p]$. If $F$ is bounded, there are no extra directions,
        so equality holds.
    \end{proof}

    \begin{claim}\label{claim:unbounded_face_lb_R3}
        Let $F\in\mathcal F$ be unbounded. Then each of the two vertices incident with the two unbounded edges
        of $F$ satisfies
        \[
            \Pr[\low_v(F)=p]\ge \frac14.
        \]
    \end{claim}

    \begin{proof}
        If $F$ has no $v$--minimum, our rule selects one of the two distinguished vertices uniformly.
        The set of directions with no minimum has spherical measure at least $\frac12$, so each distinguished
        vertex gets probability at least $\frac12\cdot\frac12=\frac14$ from this case alone.
    \end{proof}

    \begin{claim}\label{claim:neighbor_pair_R3}
        Let $p\in P$, and let $F_1,F_2\in\mathcal F$ be incident with $p$ and contained in the same plane
        $H$. Assume that $F_1$ and $F_2$ are neighboring around $p$ inside $H$, i.e.\ their sectors in $H$
        share a boundary ray. Then
        \[
            \Pr[\low_v(F_1)=p]+\Pr[\low_v(F_2)=p]\ge \frac12.
        \]
    \end{claim}

    \begin{proof}
        Since the two sectors are neighboring, their interior angles satisfy
        $\angle_{F_1}(p)+\angle_{F_2}(p)=\pi$, hence
        $\alpha_{F_1}(p)+\alpha_{F_2}(p)=\pi$.
        The claim follows from Claim~\ref{claim:angle_formula_R3}.
    \end{proof}

    We now discharge this charge onto the lower-dimensional strata.
    Let
    \[
        \mathcal S_1:=\bigcup_{\substack{A\in\supp(S)\\ |A|=2}}\conn(T_S(A)),
        \qquad
        \mathcal S_2:=\bigcup_{\substack{A\in\supp(S)\\ |A|=1}}\conn(T_S(A)).
    \]
    For $p\in P$, let $\Inc(p)$ be the set of strata-components in $\mathcal S_1\cup\mathcal S_2$
    whose closure contains $p$.
    Since $p$ lies on exactly three hyperplanes, at most three lines and at most three planes
    from the stratification pass through $p$, and therefore
    \begin{equation}\label{eq:bounded_inc_degree_R3}
        |\Inc(p)|\le 6 \qquad\text{for all }p\in P.
    \end{equation}

    Split the charge $\tau(p)$ equally among the elements of $\Inc(p)$:
    each $p\in P$ sends $\tau(p)/|\Inc(p)|$ to every $s\in\Inc(p)$.
    (If $\Inc(p)=\emptyset$, keep the charge at $p$; this only strengthens the final inequality.)
    For $s\in\mathcal S_1\cup\mathcal S_2$ set
    \[
        \ch(s):=\sum_{p\in P\cap\overline{s}}\frac{\tau(p)}{|\Inc(p)|}.
    \]
    Then
    \begin{equation}\label{eq:boundary_ge_sum_ch_R3}
        |\partial S|
        =
        \sum_{p\in P}\tau(p)
        \ge
        \sum_{s\in\mathcal S_1\cup\mathcal S_2}\ch(s).
    \end{equation}

    Call a component of $\mathcal S_1$ \emph{full} if it is an entire line
    $H_e\cap H_f$, and call a component of $\mathcal S_2$ \emph{full} if it
    is an entire plane $H_e$. All other components are called \emph{non-full}.

    We next obtain local lower bounds for this charge. We first treat endpoints of
    $1$--dimensional strata.

    \begin{claim}\label{claim:tau_endpoint_lb_R3}
        Let $p\in P$ and assume that $p$ is an endpoint of some $s\in\conn(T_S(A))$ with $|A|=2$.
        Then
        \[
            \tau(p)\ge \frac12.
        \]
    \end{claim}

    \begin{proof}
        Write $p=H_a\cap H_b\cap H_c$, and assume that the endpoint lies on the line $H_a\cap H_b$.
        In a sufficiently small ball around $p$, the eight incident chambers form a $3$--cube $Q_p$.
        The endpoint hypothesis means that one of the two open half-lines of $H_a\cap H_b$ emanating from $p$
        is contained in $T_S(\{a,b\})$. Equivalently, the four chambers of one $H_c$--layer of $Q_p$ form a square
        contained in $S$. Since $p\in\Sigma_S$, not all eight chambers belong to $S$.

        Up to cube symmetries preserving this distinguished square, exactly five local patterns can occur;
        see Figure~\ref{fig:endpoint_cases}:
        \begin{enumerate}[(i)]
            \item only the square lies in $S$;
            \item the square together with one additional chamber lies in $S$;
            \item the square together with two additional chambers lies in $S$, with the two missing chambers adjacent;
            \item the square together with two additional chambers lies in $S$, with the two missing chambers opposite;
            \item all chambers except one lie in $S$.
        \end{enumerate}

\begin{figure}[ht]
\centering
\begin{subfigure}{0.19\textwidth}
\centering
\begin{tikzpicture}[scale=0.65]
  \draw[thick,red!70!black,opacity=0.3] (0.40,0.60) -- (0.72,1.08);
  \draw[gray!50,opacity=0.3] (0.80,2.60) -- (0.80,1.34);
  \draw[gray!50,opacity=0.3] (2.40,1.20) -- (0.94,1.20);
  \node[circle,fill=white,draw=black,inner sep=2pt,minimum size=5pt,opacity=0.35] (001) at (0.8,1.2) {};
  \fill[red!50,fill opacity=0.18] (1.80,1.20) -- (1.80,3.20) -- (1.00,2.00) -- (1.00,0.00) -- cycle;
  \draw[red!70!black,thin,opacity=0.5] (1.80,1.20) -- (1.80,3.20) -- (1.00,2.00) -- (1.00,0.00) -- cycle;
  \fill[green!40,fill opacity=0.18] (0.00,1.00) -- (2.00,1.00) -- (2.80,2.20) -- (0.80,2.20) -- cycle;
  \draw[green!60!black,thin,opacity=0.5] (0.00,1.00) -- (2.00,1.00) -- (2.80,2.20) -- (0.80,2.20) -- cycle;
  \fill[orange!50,fill opacity=0.18] (0.40,0.60) -- (2.40,0.60) -- (2.40,2.60) -- (0.40,2.60) -- cycle;
  \draw[orange!70!black,thin,opacity=0.5] (0.40,0.60) -- (2.40,0.60) -- (2.40,2.60) -- (0.40,2.60) -- cycle;
  \draw[gray!50] (0.00,0.00) -- (2.00,0.00);
  \draw[gray!50] (0.00,0.00) -- (0.00,2.00);
  \draw[thick,red!70!black] (0.00,0.00) -- (0.40,0.60);
  \draw[gray!50] (2.00,2.00) -- (2.00,0.00);
  \draw[gray!50] (2.00,2.00) -- (0.00,2.00);
  \draw[thick,red!70!black] (2.00,2.00) -- (2.80,3.20);
  \draw[thick,red!70!black] (0.80,3.20) -- (0.00,2.00);
  \draw[gray!50] (0.80,3.20) -- (0.80,2.60);
  \draw[gray!50] (0.80,3.20) -- (2.80,3.20);
  \draw[thick,red!70!black] (2.80,1.20) -- (2.00,0.00);
  \draw[gray!50] (2.80,1.20) -- (2.40,1.20);
  \draw[gray!50] (2.80,1.20) -- (2.80,3.20);
  \node[circle,fill=blue!60,draw=black,inner sep=2pt,minimum size=5pt] (000) at (0.0,0.0) {};
  \node[circle,fill=blue!60,draw=black,inner sep=2pt,minimum size=5pt] (100) at (2.0,0.0) {};
  \node[circle,fill=blue!60,draw=black,inner sep=2pt,minimum size=5pt] (010) at (0.0,2.0) {};
  \node[circle,fill=blue!60,draw=black,inner sep=2pt,minimum size=5pt] (110) at (2.0,2.0) {};
  \node[circle,fill=white,draw=black,inner sep=2pt,minimum size=5pt] (101) at (2.8,1.2) {};
  \node[circle,fill=white,draw=black,inner sep=2pt,minimum size=5pt] (011) at (0.8,3.2) {};
  \node[circle,fill=white,draw=black,inner sep=2pt,minimum size=5pt] (111) at (2.8,3.2) {};
\end{tikzpicture}
\caption{(i)}
\end{subfigure}
\hfill
\begin{subfigure}{0.19\textwidth}
\centering
\begin{tikzpicture}[scale=0.65]
  \draw[gray!50,opacity=0.3] (0.40,0.60) -- (0.72,1.08);
  \draw[thick,red!70!black,opacity=0.3] (0.80,2.60) -- (0.80,1.34);
  \draw[thick,red!70!black,opacity=0.3] (2.40,1.20) -- (0.94,1.20);
  \node[circle,fill=blue!60,draw=black,inner sep=2pt,minimum size=5pt,opacity=0.35] (001) at (0.8,1.2) {};
  \fill[red!50,fill opacity=0.18] (1.80,1.20) -- (1.80,3.20) -- (1.00,2.00) -- (1.00,0.00) -- cycle;
  \draw[red!70!black,thin,opacity=0.5] (1.80,1.20) -- (1.80,3.20) -- (1.00,2.00) -- (1.00,0.00) -- cycle;
  \fill[green!40,fill opacity=0.18] (0.00,1.00) -- (2.00,1.00) -- (2.80,2.20) -- (0.80,2.20) -- cycle;
  \draw[green!60!black,thin,opacity=0.5] (0.00,1.00) -- (2.00,1.00) -- (2.80,2.20) -- (0.80,2.20) -- cycle;
  \fill[orange!50,fill opacity=0.18] (0.40,0.60) -- (2.40,0.60) -- (2.40,2.60) -- (0.40,2.60) -- cycle;
  \draw[orange!70!black,thin,opacity=0.5] (0.40,0.60) -- (2.40,0.60) -- (2.40,2.60) -- (0.40,2.60) -- cycle;
  \draw[gray!50] (0.00,0.00) -- (2.00,0.00);
  \draw[gray!50] (0.00,0.00) -- (0.00,2.00);
  \draw[gray!50] (0.00,0.00) -- (0.40,0.60);
  \draw[gray!50] (2.00,2.00) -- (2.00,0.00);
  \draw[gray!50] (2.00,2.00) -- (0.00,2.00);
  \draw[thick,red!70!black] (2.00,2.00) -- (2.80,3.20);
  \draw[thick,red!70!black] (0.80,3.20) -- (0.00,2.00);
  \draw[thick,red!70!black] (0.80,3.20) -- (0.80,2.60);
  \draw[gray!50] (0.80,3.20) -- (2.80,3.20);
  \draw[thick,red!70!black] (2.80,1.20) -- (2.00,0.00);
  \draw[thick,red!70!black] (2.80,1.20) -- (2.40,1.20);
  \draw[gray!50] (2.80,1.20) -- (2.80,3.20);
  \node[circle,fill=blue!60,draw=black,inner sep=2pt,minimum size=5pt] (000) at (0.0,0.0) {};
  \node[circle,fill=blue!60,draw=black,inner sep=2pt,minimum size=5pt] (100) at (2.0,0.0) {};
  \node[circle,fill=blue!60,draw=black,inner sep=2pt,minimum size=5pt] (010) at (0.0,2.0) {};
  \node[circle,fill=blue!60,draw=black,inner sep=2pt,minimum size=5pt] (110) at (2.0,2.0) {};
  \node[circle,fill=white,draw=black,inner sep=2pt,minimum size=5pt] (101) at (2.8,1.2) {};
  \node[circle,fill=white,draw=black,inner sep=2pt,minimum size=5pt] (011) at (0.8,3.2) {};
  \node[circle,fill=white,draw=black,inner sep=2pt,minimum size=5pt] (111) at (2.8,3.2) {};
\end{tikzpicture}
\caption{(ii)}
\end{subfigure}
\hfill
\begin{subfigure}{0.19\textwidth}
\centering
\begin{tikzpicture}[scale=0.65]
  \draw[gray!50,opacity=0.3] (0.40,0.60) -- (0.72,1.08);
  \draw[thick,red!70!black,opacity=0.3] (0.80,2.60) -- (0.80,1.34);
  \draw[gray!50,opacity=0.3] (2.40,1.20) -- (0.94,1.20);
  \node[circle,fill=blue!60,draw=black,inner sep=2pt,minimum size=5pt,opacity=0.35] (001) at (0.8,1.2) {};
  \fill[red!50,fill opacity=0.18] (1.80,1.20) -- (1.80,3.20) -- (1.00,2.00) -- (1.00,0.00) -- cycle;
  \draw[red!70!black,thin,opacity=0.5] (1.80,1.20) -- (1.80,3.20) -- (1.00,2.00) -- (1.00,0.00) -- cycle;
  \fill[green!40,fill opacity=0.18] (0.00,1.00) -- (2.00,1.00) -- (2.80,2.20) -- (0.80,2.20) -- cycle;
  \draw[green!60!black,thin,opacity=0.5] (0.00,1.00) -- (2.00,1.00) -- (2.80,2.20) -- (0.80,2.20) -- cycle;
  \fill[orange!50,fill opacity=0.18] (0.40,0.60) -- (2.40,0.60) -- (2.40,2.60) -- (0.40,2.60) -- cycle;
  \draw[orange!70!black,thin,opacity=0.5] (0.40,0.60) -- (2.40,0.60) -- (2.40,2.60) -- (0.40,2.60) -- cycle;
  \draw[gray!50] (0.00,0.00) -- (2.00,0.00);
  \draw[gray!50] (0.00,0.00) -- (0.00,2.00);
  \draw[gray!50] (0.00,0.00) -- (0.40,0.60);
  \draw[gray!50] (2.00,2.00) -- (2.00,0.00);
  \draw[gray!50] (2.00,2.00) -- (0.00,2.00);
  \draw[thick,red!70!black] (2.00,2.00) -- (2.80,3.20);
  \draw[thick,red!70!black] (0.80,3.20) -- (0.00,2.00);
  \draw[thick,red!70!black] (0.80,3.20) -- (0.80,2.60);
  \draw[gray!50] (0.80,3.20) -- (2.80,3.20);
  \draw[gray!50] (2.80,1.20) -- (2.00,0.00);
  \draw[gray!50] (2.80,1.20) -- (2.40,1.20);
  \draw[thick,red!70!black] (2.80,1.20) -- (2.80,3.20);
  \node[circle,fill=blue!60,draw=black,inner sep=2pt,minimum size=5pt] (000) at (0.0,0.0) {};
  \node[circle,fill=blue!60,draw=black,inner sep=2pt,minimum size=5pt] (100) at (2.0,0.0) {};
  \node[circle,fill=blue!60,draw=black,inner sep=2pt,minimum size=5pt] (010) at (0.0,2.0) {};
  \node[circle,fill=blue!60,draw=black,inner sep=2pt,minimum size=5pt] (110) at (2.0,2.0) {};
  \node[circle,fill=blue!60,draw=black,inner sep=2pt,minimum size=5pt] (101) at (2.8,1.2) {};
  \node[circle,fill=white,draw=black,inner sep=2pt,minimum size=5pt] (011) at (0.8,3.2) {};
  \node[circle,fill=white,draw=black,inner sep=2pt,minimum size=5pt] (111) at (2.8,3.2) {};
\end{tikzpicture}
\caption{(iii)}
\end{subfigure}
\hfill
\begin{subfigure}{0.19\textwidth}
\centering
\begin{tikzpicture}[scale=0.65]
  \draw[gray!50,opacity=0.3] (0.40,0.60) -- (0.72,1.08);
  \draw[thick,red!70!black,opacity=0.3] (0.80,2.60) -- (0.80,1.34);
  \draw[thick,red!70!black,opacity=0.3] (2.40,1.20) -- (0.94,1.20);
  \node[circle,fill=blue!60,draw=black,inner sep=2pt,minimum size=5pt,opacity=0.35] (001) at (0.8,1.2) {};
  \fill[red!50,fill opacity=0.18] (1.80,1.20) -- (1.80,3.20) -- (1.00,2.00) -- (1.00,0.00) -- cycle;
  \draw[red!70!black,thin,opacity=0.5] (1.80,1.20) -- (1.80,3.20) -- (1.00,2.00) -- (1.00,0.00) -- cycle;
  \fill[green!40,fill opacity=0.18] (0.00,1.00) -- (2.00,1.00) -- (2.80,2.20) -- (0.80,2.20) -- cycle;
  \draw[green!60!black,thin,opacity=0.5] (0.00,1.00) -- (2.00,1.00) -- (2.80,2.20) -- (0.80,2.20) -- cycle;
  \fill[orange!50,fill opacity=0.18] (0.40,0.60) -- (2.40,0.60) -- (2.40,2.60) -- (0.40,2.60) -- cycle;
  \draw[orange!70!black,thin,opacity=0.5] (0.40,0.60) -- (2.40,0.60) -- (2.40,2.60) -- (0.40,2.60) -- cycle;
  \draw[gray!50] (0.00,0.00) -- (2.00,0.00);
  \draw[gray!50] (0.00,0.00) -- (0.00,2.00);
  \draw[gray!50] (0.00,0.00) -- (0.40,0.60);
  \draw[gray!50] (2.00,2.00) -- (2.00,0.00);
  \draw[gray!50] (2.00,2.00) -- (0.00,2.00);
  \draw[gray!50] (2.00,2.00) -- (2.80,3.20);
  \draw[thick,red!70!black] (0.80,3.20) -- (0.00,2.00);
  \draw[thick,red!70!black] (0.80,3.20) -- (0.80,2.60);
  \draw[thick,red!70!black] (0.80,3.20) -- (2.80,3.20);
  \draw[thick,red!70!black] (2.80,1.20) -- (2.00,0.00);
  \draw[thick,red!70!black] (2.80,1.20) -- (2.40,1.20);
  \draw[thick,red!70!black] (2.80,1.20) -- (2.80,3.20);
  \node[circle,fill=blue!60,draw=black,inner sep=2pt,minimum size=5pt] (000) at (0.0,0.0) {};
  \node[circle,fill=blue!60,draw=black,inner sep=2pt,minimum size=5pt] (100) at (2.0,0.0) {};
  \node[circle,fill=blue!60,draw=black,inner sep=2pt,minimum size=5pt] (010) at (0.0,2.0) {};
  \node[circle,fill=blue!60,draw=black,inner sep=2pt,minimum size=5pt] (110) at (2.0,2.0) {};
  \node[circle,fill=white,draw=black,inner sep=2pt,minimum size=5pt] (101) at (2.8,1.2) {};
  \node[circle,fill=white,draw=black,inner sep=2pt,minimum size=5pt] (011) at (0.8,3.2) {};
  \node[circle,fill=blue!60,draw=black,inner sep=2pt,minimum size=5pt] (111) at (2.8,3.2) {};
\end{tikzpicture}
\caption{(iv)}
\end{subfigure}
\hfill
\begin{subfigure}{0.19\textwidth}
\centering
\begin{tikzpicture}[scale=0.65]
  \draw[gray!50,opacity=0.3] (0.40,0.60) -- (0.72,1.08);
  \draw[gray!50,opacity=0.3] (0.80,2.60) -- (0.80,1.34);
  \draw[gray!50,opacity=0.3] (2.40,1.20) -- (0.94,1.20);
  \node[circle,fill=blue!60,draw=black,inner sep=2pt,minimum size=5pt,opacity=0.35] (001) at (0.8,1.2) {};
  \fill[red!50,fill opacity=0.18] (1.80,1.20) -- (1.80,3.20) -- (1.00,2.00) -- (1.00,0.00) -- cycle;
  \draw[red!70!black,thin,opacity=0.5] (1.80,1.20) -- (1.80,3.20) -- (1.00,2.00) -- (1.00,0.00) -- cycle;
  \fill[green!40,fill opacity=0.18] (0.00,1.00) -- (2.00,1.00) -- (2.80,2.20) -- (0.80,2.20) -- cycle;
  \draw[green!60!black,thin,opacity=0.5] (0.00,1.00) -- (2.00,1.00) -- (2.80,2.20) -- (0.80,2.20) -- cycle;
  \fill[orange!50,fill opacity=0.18] (0.40,0.60) -- (2.40,0.60) -- (2.40,2.60) -- (0.40,2.60) -- cycle;
  \draw[orange!70!black,thin,opacity=0.5] (0.40,0.60) -- (2.40,0.60) -- (2.40,2.60) -- (0.40,2.60) -- cycle;
  \draw[gray!50] (0.00,0.00) -- (2.00,0.00);
  \draw[gray!50] (0.00,0.00) -- (0.00,2.00);
  \draw[gray!50] (0.00,0.00) -- (0.40,0.60);
  \draw[gray!50] (2.00,2.00) -- (2.00,0.00);
  \draw[gray!50] (2.00,2.00) -- (0.00,2.00);
  \draw[thick,red!70!black] (2.00,2.00) -- (2.80,3.20);
  \draw[gray!50] (0.80,3.20) -- (0.00,2.00);
  \draw[gray!50] (0.80,3.20) -- (0.80,2.60);
  \draw[thick,red!70!black] (0.80,3.20) -- (2.80,3.20);
  \draw[gray!50] (2.80,1.20) -- (2.00,0.00);
  \draw[gray!50] (2.80,1.20) -- (2.40,1.20);
  \draw[thick,red!70!black] (2.80,1.20) -- (2.80,3.20);
  \node[circle,fill=blue!60,draw=black,inner sep=2pt,minimum size=5pt] (000) at (0.0,0.0) {};
  \node[circle,fill=blue!60,draw=black,inner sep=2pt,minimum size=5pt] (100) at (2.0,0.0) {};
  \node[circle,fill=blue!60,draw=black,inner sep=2pt,minimum size=5pt] (010) at (0.0,2.0) {};
  \node[circle,fill=blue!60,draw=black,inner sep=2pt,minimum size=5pt] (110) at (2.0,2.0) {};
  \node[circle,fill=blue!60,draw=black,inner sep=2pt,minimum size=5pt] (101) at (2.8,1.2) {};
  \node[circle,fill=blue!60,draw=black,inner sep=2pt,minimum size=5pt] (011) at (0.8,3.2) {};
  \node[circle,fill=white,draw=black,inner sep=2pt,minimum size=5pt] (111) at (2.8,3.2) {};
\end{tikzpicture}
\caption{(v)}
\end{subfigure}
\caption{Local configurations at an endpoint of a $1$--stratum component (Claim~\ref{claim:tau_endpoint_lb_R3}). Filled vertices represent chambers in~$S$; hollow vertices are chambers not in~$S$. Red edges cross the boundary~$\partial S$. The bottom face is the distinguished square (four chambers in one $H_c$--layer all belonging to~$S$).}
\label{fig:endpoint_cases}
\end{figure}
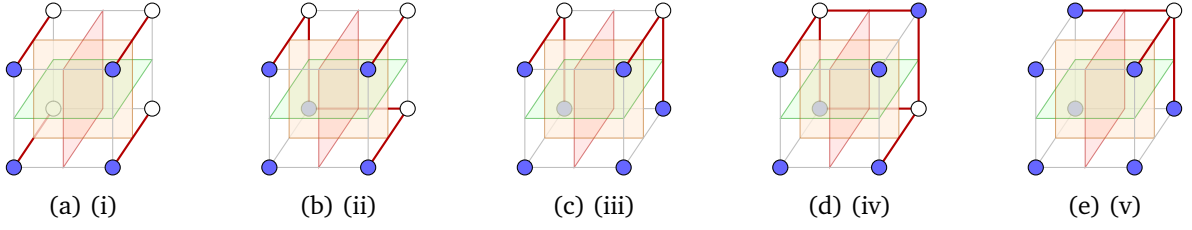

        In case (i), the four boundary faces at $p$ lie in the same plane, so their exterior angles sum to $2\pi$;
        Claim~\ref{claim:angle_formula_R3} gives $\tau(p)\ge 1$.

        In case (ii), the three missing chambers all lie in the $H_c$--layer opposite to the distinguished square.
        Hence, in the plane $H_c$, there are at least two neighboring boundary faces at $p$, so
        Claim~\ref{claim:neighbor_pair_R3} gives $\tau(p)\ge \frac12$.

        In case (iii), one of the planes $H_a,H_b,H_c$ contains two neighboring boundary faces,
        so Claim~\ref{claim:neighbor_pair_R3} gives $\tau(p)\ge \frac12$.

        In case (iv), the two opposite missing chambers define two disjoint local missing cones,
        each contributing at least $\frac12$ by the previous paragraph. Hence $\tau(p)\ge 1$.

        In case (v), there is a unique missing chamber. The three boundary faces separating this chamber from the
        three adjacent chambers of $S$ form a trihedral configuration. The sum of their planar angles is at most $2\pi$,
        so the sum of their exterior angles is at least $\pi$. Claim~\ref{claim:angle_formula_R3} therefore yields
        $\tau(p)\ge \frac12$.
    \end{proof}

    \begin{claim}\label{clm:S1_charge_R3}
        Every non-full component $s\in\mathcal S_1$ satisfies $\ch(s)\ge \frac1{12}$.
    \end{claim}

    \begin{proof}
        A non-full connected open subset of a line is an interval or a ray, hence has at least one endpoint
        $p\in P$. By Claim~\ref{claim:tau_endpoint_lb_R3}, $\tau(p)\ge \frac12$, and thus
        \[
            \ch(s)\ge \frac{\tau(p)}{|\Inc(p)|}\ge \frac{1/2}{6}=\frac1{12}
        \]
        by \eqref{eq:bounded_inc_degree_R3}.
    \end{proof}

    Now let $s\in\mathcal S_2$ be a connected component of $T_S(\{e\})\subset H_e$ for some $e$.
    Assume that $\overline{s}^{\,H_e}\neq H_e$, and write
    \[
        \partial s:=\partial_{H_e}s,
        \qquad
        V(s):=P\cap\partial s.
    \]

    \begin{claim}\label{clm:S2_charge_R3}
        Every non-full component $s\in\mathcal S_2$ satisfies $\ch(s)\ge \frac1{12}$.
    \end{claim}

    \begin{proof}
        We show that
        \begin{equation}\label{eq:sumtau_Vs_lb_R3}
            \sum_{p\in V(s)}\tau(p)\ge \frac12.
        \end{equation}
        Then \eqref{eq:bounded_inc_degree_R3} implies
        \[
            \ch(s)\ge \frac16\sum_{p\in V(s)}\tau(p)\ge \frac1{12}.
        \]

        If $s$ is unbounded, then its polygonal boundary in $H_e$ has two ends. At each end there is a vertex
        incident with an unbounded face of $\mathcal F$, so Claim~\ref{claim:unbounded_face_lb_R3} gives two vertices
        with charge at least $\frac14$. Hence \eqref{eq:sumtau_Vs_lb_R3} holds.

        Now assume that $s$ is bounded. If there exist $f$ and
        $s'\in\conn(T_S(\{e,f\}))$ with
        \[
            \overline{s'}^{\,H_e\cap H_f}\cap \partial s\neq \emptyset,
        \]
        then any point $p$ in this intersection is an endpoint of the $1$--stratum component $s'$,
        so Claim~\ref{claim:tau_endpoint_lb_R3} yields $\tau(p)\ge \frac12$ and we are done.

        Thus we may assume that
        \begin{equation}\label{eq:no_s1_accumulation_R3}
            \overline{s'}^{\,H_e\cap H_f}\cap \partial s=\emptyset
            \qquad\text{for every $f$ and every }s'\in\conn(T_S(\{e,f\})).
        \end{equation}

        We claim that under \eqref{eq:no_s1_accumulation_R3}, the set $s$ is a bounded convex polygon in $H_e$.
        Indeed, fix $p\in V(s)$ and write $p=H_e\cap H_a\cap H_b$. In the plane $H_e$, the two lines
        $H_e\cap H_a$ and $H_e\cap H_b$ cut a neighborhood of $p$ into four sectors. Since $s$ is connected and
        $p$ is a boundary vertex of $s$, the local intersection $s\cap H_e$ can consist only of one sector, two
        adjacent sectors, or three consecutive sectors. If two adjacent sectors are occupied, then their common
        boundary half-line in $H_e\cap H_a$ or $H_e\cap H_b$ is contained in a $1$--stratum component of $T_S$
        whose closure meets $p$, contradicting \eqref{eq:no_s1_accumulation_R3}. The same contradiction arises if
        three consecutive sectors are occupied, since then one of the two half-lines of $H_e\cap H_a$ or
        $H_e\cap H_b$ emanating from $p$ lies in a $1$--stratum component of $T_S$. Thus exactly one sector is
        occupied at each boundary vertex.
        Consequently every interior angle of $\partial s$ is $<\pi$.
        Each boundary component of $s$ is therefore a simple polygon with all interior angles $<\pi$, hence convex.
        Since $s$ is connected and bounded, there can be no inner boundary component: viewed from $s$,
        a vertex on an inner component would have interior angle $>\pi$. Thus $\partial s$ is a single convex polygon.

        For $p\in V(s)$, let $r_s(p)\in(0,\pi)$ denote the interior angle of $s$ at $p$.
        Let $Q_p$ be the local $3$--cube around $p$, determined by the hyperplanes $H_e,H_a,H_b$, and set
        $X:=S\cap V(Q_p)$.

        \begin{claim}\label{clm:local_S2_dichotomy_R3}
            For every $p\in V(s)$ under \eqref{eq:no_s1_accumulation_R3}, one of the following holds:
            \begin{enumerate}[(a)]
                \item $\tau(p)\ge \frac12$;
                \item up to the symmetries of $Q_p$ preserving the two $H_e$--layers,
                      \[
                          X=\{000,001,010,100\},
                      \]
                      and in this exceptional configuration
                      \[
                          \tau(p)\ge \frac{r_s(p)}{\pi}.
                      \]
            \end{enumerate}
        \end{claim}

        \begin{proof}
            Since exactly one sector of $H_e$ is occupied by $s$ near $p$, after relabeling we may assume
            that this sector is the one corresponding to the pair of chambers $000$ and $100$, so these two
            vertices belong to $X$.
            The assumption \eqref{eq:no_s1_accumulation_R3} implies that no full $y$--slice or $z$--slice of $Q_p$
            lies in $X$; equivalently, $p$ is not an endpoint of a $1$--stratum component. Hence $2\le |X|\le 5$.

            If $|X|=2$, then $X=\{000,100\}$ and both planes $H_a$ and $H_b$ contain a neighboring pair of
            boundary faces at $p$. Claim~\ref{claim:neighbor_pair_R3} gives $\tau(p)\ge 1$.

            If $|X|=3$, then up to swapping $H_a$ and $H_b$ there are only two possibilities:
            \[
                X=\{000,001,100\}
                \qquad\text{or}\qquad
                X=\{000,011,100\}.
            \]
            In each case one of the planes $H_a$ or $H_b$ contains a neighboring pair of boundary faces,
            so Claim~\ref{claim:neighbor_pair_R3} gives $\tau(p)\ge \frac12$.

            If $|X|=5$, replace $X$ by its complement in $Q_p$. The set of boundary faces incident with $p$
            is unchanged, so this case is the complement of the previous one and again yields $\tau(p)\ge \frac12$.

            It remains to consider $|X|=4$. Up to the same symmetries, there are four admissible patterns;
            see Figure~\ref{fig:S2_dichotomy_cases}:
            \[
                \{000,001,010,100\},\quad
                \{000,001,100,110\},\quad
                \{000,001,010,101\},\quad
                \{000,001,100,111\}.
            \]

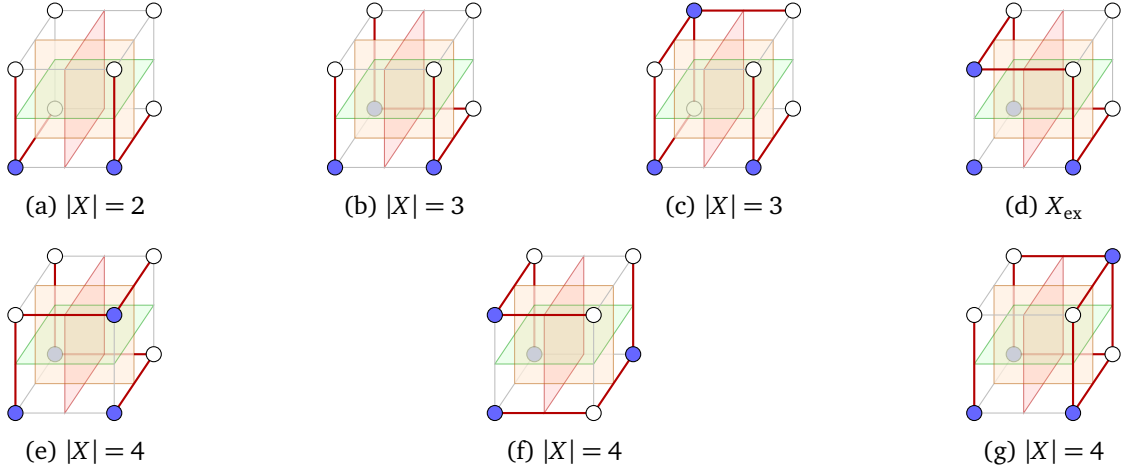
\begin{figure}[ht]
\centering
\begin{subfigure}{0.24\textwidth}
\centering
\begin{tikzpicture}[scale=0.65]
  \draw[thick,red!70!black,opacity=0.3] (0.40,0.60) -- (0.72,1.08);
  \draw[gray!50,opacity=0.3] (0.80,2.60) -- (0.80,1.34);
  \draw[gray!50,opacity=0.3] (2.40,1.20) -- (0.94,1.20);
  \node[circle,fill=white,draw=black,inner sep=2pt,minimum size=5pt,opacity=0.35] (001) at (0.8,1.2) {};
  \fill[red!50,fill opacity=0.18] (1.80,1.20) -- (1.80,3.20) -- (1.00,2.00) -- (1.00,0.00) -- cycle;
  \draw[red!70!black,thin,opacity=0.5] (1.80,1.20) -- (1.80,3.20) -- (1.00,2.00) -- (1.00,0.00) -- cycle;
  \fill[green!40,fill opacity=0.18] (0.00,1.00) -- (2.00,1.00) -- (2.80,2.20) -- (0.80,2.20) -- cycle;
  \draw[green!60!black,thin,opacity=0.5] (0.00,1.00) -- (2.00,1.00) -- (2.80,2.20) -- (0.80,2.20) -- cycle;
  \fill[orange!50,fill opacity=0.18] (0.40,0.60) -- (2.40,0.60) -- (2.40,2.60) -- (0.40,2.60) -- cycle;
  \draw[orange!70!black,thin,opacity=0.5] (0.40,0.60) -- (2.40,0.60) -- (2.40,2.60) -- (0.40,2.60) -- cycle;
  \draw[gray!50] (0.00,0.00) -- (2.00,0.00);
  \draw[thick,red!70!black] (0.00,0.00) -- (0.00,2.00);
  \draw[thick,red!70!black] (0.00,0.00) -- (0.40,0.60);
  \draw[thick,red!70!black] (2.00,2.00) -- (2.00,0.00);
  \draw[gray!50] (2.00,2.00) -- (0.00,2.00);
  \draw[gray!50] (2.00,2.00) -- (2.80,3.20);
  \draw[gray!50] (0.80,3.20) -- (0.00,2.00);
  \draw[gray!50] (0.80,3.20) -- (0.80,2.60);
  \draw[gray!50] (0.80,3.20) -- (2.80,3.20);
  \draw[thick,red!70!black] (2.80,1.20) -- (2.00,0.00);
  \draw[gray!50] (2.80,1.20) -- (2.40,1.20);
  \draw[gray!50] (2.80,1.20) -- (2.80,3.20);
  \node[circle,fill=blue!60,draw=black,inner sep=2pt,minimum size=5pt] (000) at (0.0,0.0) {};
  \node[circle,fill=blue!60,draw=black,inner sep=2pt,minimum size=5pt] (100) at (2.0,0.0) {};
  \node[circle,fill=white,draw=black,inner sep=2pt,minimum size=5pt] (010) at (0.0,2.0) {};
  \node[circle,fill=white,draw=black,inner sep=2pt,minimum size=5pt] (110) at (2.0,2.0) {};
  \node[circle,fill=white,draw=black,inner sep=2pt,minimum size=5pt] (101) at (2.8,1.2) {};
  \node[circle,fill=white,draw=black,inner sep=2pt,minimum size=5pt] (011) at (0.8,3.2) {};
  \node[circle,fill=white,draw=black,inner sep=2pt,minimum size=5pt] (111) at (2.8,3.2) {};
\end{tikzpicture}
\caption{$|X|=2$}
\end{subfigure}
\hfill
\begin{subfigure}{0.24\textwidth}
\centering
\begin{tikzpicture}[scale=0.65]
  \draw[gray!50,opacity=0.3] (0.40,0.60) -- (0.72,1.08);
  \draw[thick,red!70!black,opacity=0.3] (0.80,2.60) -- (0.80,1.34);
  \draw[thick,red!70!black,opacity=0.3] (2.40,1.20) -- (0.94,1.20);
  \node[circle,fill=blue!60,draw=black,inner sep=2pt,minimum size=5pt,opacity=0.35] (001) at (0.8,1.2) {};
  \fill[red!50,fill opacity=0.18] (1.80,1.20) -- (1.80,3.20) -- (1.00,2.00) -- (1.00,0.00) -- cycle;
  \draw[red!70!black,thin,opacity=0.5] (1.80,1.20) -- (1.80,3.20) -- (1.00,2.00) -- (1.00,0.00) -- cycle;
  \fill[green!40,fill opacity=0.18] (0.00,1.00) -- (2.00,1.00) -- (2.80,2.20) -- (0.80,2.20) -- cycle;
  \draw[green!60!black,thin,opacity=0.5] (0.00,1.00) -- (2.00,1.00) -- (2.80,2.20) -- (0.80,2.20) -- cycle;
  \fill[orange!50,fill opacity=0.18] (0.40,0.60) -- (2.40,0.60) -- (2.40,2.60) -- (0.40,2.60) -- cycle;
  \draw[orange!70!black,thin,opacity=0.5] (0.40,0.60) -- (2.40,0.60) -- (2.40,2.60) -- (0.40,2.60) -- cycle;
  \draw[gray!50] (0.00,0.00) -- (2.00,0.00);
  \draw[thick,red!70!black] (0.00,0.00) -- (0.00,2.00);
  \draw[gray!50] (0.00,0.00) -- (0.40,0.60);
  \draw[thick,red!70!black] (2.00,2.00) -- (2.00,0.00);
  \draw[gray!50] (2.00,2.00) -- (0.00,2.00);
  \draw[gray!50] (2.00,2.00) -- (2.80,3.20);
  \draw[gray!50] (0.80,3.20) -- (0.00,2.00);
  \draw[thick,red!70!black] (0.80,3.20) -- (0.80,2.60);
  \draw[gray!50] (0.80,3.20) -- (2.80,3.20);
  \draw[thick,red!70!black] (2.80,1.20) -- (2.00,0.00);
  \draw[thick,red!70!black] (2.80,1.20) -- (2.40,1.20);
  \draw[gray!50] (2.80,1.20) -- (2.80,3.20);
  \node[circle,fill=blue!60,draw=black,inner sep=2pt,minimum size=5pt] (000) at (0.0,0.0) {};
  \node[circle,fill=blue!60,draw=black,inner sep=2pt,minimum size=5pt] (100) at (2.0,0.0) {};
  \node[circle,fill=white,draw=black,inner sep=2pt,minimum size=5pt] (010) at (0.0,2.0) {};
  \node[circle,fill=white,draw=black,inner sep=2pt,minimum size=5pt] (110) at (2.0,2.0) {};
  \node[circle,fill=white,draw=black,inner sep=2pt,minimum size=5pt] (101) at (2.8,1.2) {};
  \node[circle,fill=white,draw=black,inner sep=2pt,minimum size=5pt] (011) at (0.8,3.2) {};
  \node[circle,fill=white,draw=black,inner sep=2pt,minimum size=5pt] (111) at (2.8,3.2) {};
\end{tikzpicture}
\caption{$|X|=3$}
\end{subfigure}
\hfill
\begin{subfigure}{0.24\textwidth}
\centering
\begin{tikzpicture}[scale=0.65]
  \draw[thick,red!70!black,opacity=0.3] (0.40,0.60) -- (0.72,1.08);
  \draw[thick,red!70!black,opacity=0.3] (0.80,2.60) -- (0.80,1.34);
  \draw[gray!50,opacity=0.3] (2.40,1.20) -- (0.94,1.20);
  \node[circle,fill=white,draw=black,inner sep=2pt,minimum size=5pt,opacity=0.35] (001) at (0.8,1.2) {};
  \fill[red!50,fill opacity=0.18] (1.80,1.20) -- (1.80,3.20) -- (1.00,2.00) -- (1.00,0.00) -- cycle;
  \draw[red!70!black,thin,opacity=0.5] (1.80,1.20) -- (1.80,3.20) -- (1.00,2.00) -- (1.00,0.00) -- cycle;
  \fill[green!40,fill opacity=0.18] (0.00,1.00) -- (2.00,1.00) -- (2.80,2.20) -- (0.80,2.20) -- cycle;
  \draw[green!60!black,thin,opacity=0.5] (0.00,1.00) -- (2.00,1.00) -- (2.80,2.20) -- (0.80,2.20) -- cycle;
  \fill[orange!50,fill opacity=0.18] (0.40,0.60) -- (2.40,0.60) -- (2.40,2.60) -- (0.40,2.60) -- cycle;
  \draw[orange!70!black,thin,opacity=0.5] (0.40,0.60) -- (2.40,0.60) -- (2.40,2.60) -- (0.40,2.60) -- cycle;
  \draw[gray!50] (0.00,0.00) -- (2.00,0.00);
  \draw[thick,red!70!black] (0.00,0.00) -- (0.00,2.00);
  \draw[thick,red!70!black] (0.00,0.00) -- (0.40,0.60);
  \draw[thick,red!70!black] (2.00,2.00) -- (2.00,0.00);
  \draw[gray!50] (2.00,2.00) -- (0.00,2.00);
  \draw[gray!50] (2.00,2.00) -- (2.80,3.20);
  \draw[thick,red!70!black] (0.80,3.20) -- (0.00,2.00);
  \draw[thick,red!70!black] (0.80,3.20) -- (0.80,2.60);
  \draw[thick,red!70!black] (0.80,3.20) -- (2.80,3.20);
  \draw[thick,red!70!black] (2.80,1.20) -- (2.00,0.00);
  \draw[gray!50] (2.80,1.20) -- (2.40,1.20);
  \draw[gray!50] (2.80,1.20) -- (2.80,3.20);
  \node[circle,fill=blue!60,draw=black,inner sep=2pt,minimum size=5pt] (000) at (0.0,0.0) {};
  \node[circle,fill=blue!60,draw=black,inner sep=2pt,minimum size=5pt] (100) at (2.0,0.0) {};
  \node[circle,fill=white,draw=black,inner sep=2pt,minimum size=5pt] (010) at (0.0,2.0) {};
  \node[circle,fill=white,draw=black,inner sep=2pt,minimum size=5pt] (110) at (2.0,2.0) {};
  \node[circle,fill=white,draw=black,inner sep=2pt,minimum size=5pt] (101) at (2.8,1.2) {};
  \node[circle,fill=blue!60,draw=black,inner sep=2pt,minimum size=5pt] (011) at (0.8,3.2) {};
  \node[circle,fill=white,draw=black,inner sep=2pt,minimum size=5pt] (111) at (2.8,3.2) {};
\end{tikzpicture}
\caption{$|X|=3$}
\end{subfigure}
\hfill
\begin{subfigure}{0.24\textwidth}
\centering
\begin{tikzpicture}[scale=0.65]
  \draw[gray!50,opacity=0.3] (0.40,0.60) -- (0.72,1.08);
  \draw[thick,red!70!black,opacity=0.3] (0.80,2.60) -- (0.80,1.34);
  \draw[thick,red!70!black,opacity=0.3] (2.40,1.20) -- (0.94,1.20);
  \node[circle,fill=blue!60,draw=black,inner sep=2pt,minimum size=5pt,opacity=0.35] (001) at (0.8,1.2) {};
  \fill[red!50,fill opacity=0.18] (1.80,1.20) -- (1.80,3.20) -- (1.00,2.00) -- (1.00,0.00) -- cycle;
  \draw[red!70!black,thin,opacity=0.5] (1.80,1.20) -- (1.80,3.20) -- (1.00,2.00) -- (1.00,0.00) -- cycle;
  \fill[green!40,fill opacity=0.18] (0.00,1.00) -- (2.00,1.00) -- (2.80,2.20) -- (0.80,2.20) -- cycle;
  \draw[green!60!black,thin,opacity=0.5] (0.00,1.00) -- (2.00,1.00) -- (2.80,2.20) -- (0.80,2.20) -- cycle;
  \fill[orange!50,fill opacity=0.18] (0.40,0.60) -- (2.40,0.60) -- (2.40,2.60) -- (0.40,2.60) -- cycle;
  \draw[orange!70!black,thin,opacity=0.5] (0.40,0.60) -- (2.40,0.60) -- (2.40,2.60) -- (0.40,2.60) -- cycle;
  \draw[gray!50] (0.00,0.00) -- (2.00,0.00);
  \draw[gray!50] (0.00,0.00) -- (0.00,2.00);
  \draw[gray!50] (0.00,0.00) -- (0.40,0.60);
  \draw[thick,red!70!black] (2.00,2.00) -- (2.00,0.00);
  \draw[thick,red!70!black] (2.00,2.00) -- (0.00,2.00);
  \draw[gray!50] (2.00,2.00) -- (2.80,3.20);
  \draw[thick,red!70!black] (0.80,3.20) -- (0.00,2.00);
  \draw[thick,red!70!black] (0.80,3.20) -- (0.80,2.60);
  \draw[gray!50] (0.80,3.20) -- (2.80,3.20);
  \draw[thick,red!70!black] (2.80,1.20) -- (2.00,0.00);
  \draw[thick,red!70!black] (2.80,1.20) -- (2.40,1.20);
  \draw[gray!50] (2.80,1.20) -- (2.80,3.20);
  \node[circle,fill=blue!60,draw=black,inner sep=2pt,minimum size=5pt] (000) at (0.0,0.0) {};
  \node[circle,fill=blue!60,draw=black,inner sep=2pt,minimum size=5pt] (100) at (2.0,0.0) {};
  \node[circle,fill=blue!60,draw=black,inner sep=2pt,minimum size=5pt] (010) at (0.0,2.0) {};
  \node[circle,fill=white,draw=black,inner sep=2pt,minimum size=5pt] (110) at (2.0,2.0) {};
  \node[circle,fill=white,draw=black,inner sep=2pt,minimum size=5pt] (101) at (2.8,1.2) {};
  \node[circle,fill=white,draw=black,inner sep=2pt,minimum size=5pt] (011) at (0.8,3.2) {};
  \node[circle,fill=white,draw=black,inner sep=2pt,minimum size=5pt] (111) at (2.8,3.2) {};
\end{tikzpicture}
\caption{$X_{\mathrm{ex}}$}
\end{subfigure}
\\[0.8em]
\begin{subfigure}{0.24\textwidth}
\centering
\begin{tikzpicture}[scale=0.65]
  \draw[gray!50,opacity=0.3] (0.40,0.60) -- (0.72,1.08);
  \draw[thick,red!70!black,opacity=0.3] (0.80,2.60) -- (0.80,1.34);
  \draw[thick,red!70!black,opacity=0.3] (2.40,1.20) -- (0.94,1.20);
  \node[circle,fill=blue!60,draw=black,inner sep=2pt,minimum size=5pt,opacity=0.35] (001) at (0.8,1.2) {};
  \fill[red!50,fill opacity=0.18] (1.80,1.20) -- (1.80,3.20) -- (1.00,2.00) -- (1.00,0.00) -- cycle;
  \draw[red!70!black,thin,opacity=0.5] (1.80,1.20) -- (1.80,3.20) -- (1.00,2.00) -- (1.00,0.00) -- cycle;
  \fill[green!40,fill opacity=0.18] (0.00,1.00) -- (2.00,1.00) -- (2.80,2.20) -- (0.80,2.20) -- cycle;
  \draw[green!60!black,thin,opacity=0.5] (0.00,1.00) -- (2.00,1.00) -- (2.80,2.20) -- (0.80,2.20) -- cycle;
  \fill[orange!50,fill opacity=0.18] (0.40,0.60) -- (2.40,0.60) -- (2.40,2.60) -- (0.40,2.60) -- cycle;
  \draw[orange!70!black,thin,opacity=0.5] (0.40,0.60) -- (2.40,0.60) -- (2.40,2.60) -- (0.40,2.60) -- cycle;
  \draw[gray!50] (0.00,0.00) -- (2.00,0.00);
  \draw[thick,red!70!black] (0.00,0.00) -- (0.00,2.00);
  \draw[gray!50] (0.00,0.00) -- (0.40,0.60);
  \draw[gray!50] (2.00,2.00) -- (2.00,0.00);
  \draw[thick,red!70!black] (2.00,2.00) -- (0.00,2.00);
  \draw[thick,red!70!black] (2.00,2.00) -- (2.80,3.20);
  \draw[gray!50] (0.80,3.20) -- (0.00,2.00);
  \draw[thick,red!70!black] (0.80,3.20) -- (0.80,2.60);
  \draw[gray!50] (0.80,3.20) -- (2.80,3.20);
  \draw[thick,red!70!black] (2.80,1.20) -- (2.00,0.00);
  \draw[thick,red!70!black] (2.80,1.20) -- (2.40,1.20);
  \draw[gray!50] (2.80,1.20) -- (2.80,3.20);
  \node[circle,fill=blue!60,draw=black,inner sep=2pt,minimum size=5pt] (000) at (0.0,0.0) {};
  \node[circle,fill=blue!60,draw=black,inner sep=2pt,minimum size=5pt] (100) at (2.0,0.0) {};
  \node[circle,fill=white,draw=black,inner sep=2pt,minimum size=5pt] (010) at (0.0,2.0) {};
  \node[circle,fill=blue!60,draw=black,inner sep=2pt,minimum size=5pt] (110) at (2.0,2.0) {};
  \node[circle,fill=white,draw=black,inner sep=2pt,minimum size=5pt] (101) at (2.8,1.2) {};
  \node[circle,fill=white,draw=black,inner sep=2pt,minimum size=5pt] (011) at (0.8,3.2) {};
  \node[circle,fill=white,draw=black,inner sep=2pt,minimum size=5pt] (111) at (2.8,3.2) {};
\end{tikzpicture}
\caption{$|X|=4$}
\end{subfigure}
\hfill
\begin{subfigure}{0.24\textwidth}
\centering
\begin{tikzpicture}[scale=0.65]
  \draw[gray!50,opacity=0.3] (0.40,0.60) -- (0.72,1.08);
  \draw[thick,red!70!black,opacity=0.3] (0.80,2.60) -- (0.80,1.34);
  \draw[gray!50,opacity=0.3] (2.40,1.20) -- (0.94,1.20);
  \node[circle,fill=blue!60,draw=black,inner sep=2pt,minimum size=5pt,opacity=0.35] (001) at (0.8,1.2) {};
  \fill[red!50,fill opacity=0.18] (1.80,1.20) -- (1.80,3.20) -- (1.00,2.00) -- (1.00,0.00) -- cycle;
  \draw[red!70!black,thin,opacity=0.5] (1.80,1.20) -- (1.80,3.20) -- (1.00,2.00) -- (1.00,0.00) -- cycle;
  \fill[green!40,fill opacity=0.18] (0.00,1.00) -- (2.00,1.00) -- (2.80,2.20) -- (0.80,2.20) -- cycle;
  \draw[green!60!black,thin,opacity=0.5] (0.00,1.00) -- (2.00,1.00) -- (2.80,2.20) -- (0.80,2.20) -- cycle;
  \fill[orange!50,fill opacity=0.18] (0.40,0.60) -- (2.40,0.60) -- (2.40,2.60) -- (0.40,2.60) -- cycle;
  \draw[orange!70!black,thin,opacity=0.5] (0.40,0.60) -- (2.40,0.60) -- (2.40,2.60) -- (0.40,2.60) -- cycle;
  \draw[thick,red!70!black] (0.00,0.00) -- (2.00,0.00);
  \draw[gray!50] (0.00,0.00) -- (0.00,2.00);
  \draw[gray!50] (0.00,0.00) -- (0.40,0.60);
  \draw[gray!50] (2.00,2.00) -- (2.00,0.00);
  \draw[thick,red!70!black] (2.00,2.00) -- (0.00,2.00);
  \draw[gray!50] (2.00,2.00) -- (2.80,3.20);
  \draw[thick,red!70!black] (0.80,3.20) -- (0.00,2.00);
  \draw[thick,red!70!black] (0.80,3.20) -- (0.80,2.60);
  \draw[gray!50] (0.80,3.20) -- (2.80,3.20);
  \draw[thick,red!70!black] (2.80,1.20) -- (2.00,0.00);
  \draw[gray!50] (2.80,1.20) -- (2.40,1.20);
  \draw[thick,red!70!black] (2.80,1.20) -- (2.80,3.20);
  \node[circle,fill=blue!60,draw=black,inner sep=2pt,minimum size=5pt] (000) at (0.0,0.0) {};
  \node[circle,fill=white,draw=black,inner sep=2pt,minimum size=5pt] (100) at (2.0,0.0) {};
  \node[circle,fill=blue!60,draw=black,inner sep=2pt,minimum size=5pt] (010) at (0.0,2.0) {};
  \node[circle,fill=white,draw=black,inner sep=2pt,minimum size=5pt] (110) at (2.0,2.0) {};
  \node[circle,fill=blue!60,draw=black,inner sep=2pt,minimum size=5pt] (101) at (2.8,1.2) {};
  \node[circle,fill=white,draw=black,inner sep=2pt,minimum size=5pt] (011) at (0.8,3.2) {};
  \node[circle,fill=white,draw=black,inner sep=2pt,minimum size=5pt] (111) at (2.8,3.2) {};
\end{tikzpicture}
\caption{$|X|=4$}
\end{subfigure}
\hfill
\begin{subfigure}{0.24\textwidth}
\centering
\begin{tikzpicture}[scale=0.65]
  \draw[gray!50,opacity=0.3] (0.40,0.60) -- (0.72,1.08);
  \draw[thick,red!70!black,opacity=0.3] (0.80,2.60) -- (0.80,1.34);
  \draw[thick,red!70!black,opacity=0.3] (2.40,1.20) -- (0.94,1.20);
  \node[circle,fill=blue!60,draw=black,inner sep=2pt,minimum size=5pt,opacity=0.35] (001) at (0.8,1.2) {};
  \fill[red!50,fill opacity=0.18] (1.80,1.20) -- (1.80,3.20) -- (1.00,2.00) -- (1.00,0.00) -- cycle;
  \draw[red!70!black,thin,opacity=0.5] (1.80,1.20) -- (1.80,3.20) -- (1.00,2.00) -- (1.00,0.00) -- cycle;
  \fill[green!40,fill opacity=0.18] (0.00,1.00) -- (2.00,1.00) -- (2.80,2.20) -- (0.80,2.20) -- cycle;
  \draw[green!60!black,thin,opacity=0.5] (0.00,1.00) -- (2.00,1.00) -- (2.80,2.20) -- (0.80,2.20) -- cycle;
  \fill[orange!50,fill opacity=0.18] (0.40,0.60) -- (2.40,0.60) -- (2.40,2.60) -- (0.40,2.60) -- cycle;
  \draw[orange!70!black,thin,opacity=0.5] (0.40,0.60) -- (2.40,0.60) -- (2.40,2.60) -- (0.40,2.60) -- cycle;
  \draw[gray!50] (0.00,0.00) -- (2.00,0.00);
  \draw[thick,red!70!black] (0.00,0.00) -- (0.00,2.00);
  \draw[gray!50] (0.00,0.00) -- (0.40,0.60);
  \draw[thick,red!70!black] (2.00,2.00) -- (2.00,0.00);
  \draw[gray!50] (2.00,2.00) -- (0.00,2.00);
  \draw[thick,red!70!black] (2.00,2.00) -- (2.80,3.20);
  \draw[gray!50] (0.80,3.20) -- (0.00,2.00);
  \draw[thick,red!70!black] (0.80,3.20) -- (0.80,2.60);
  \draw[thick,red!70!black] (0.80,3.20) -- (2.80,3.20);
  \draw[thick,red!70!black] (2.80,1.20) -- (2.00,0.00);
  \draw[thick,red!70!black] (2.80,1.20) -- (2.40,1.20);
  \draw[thick,red!70!black] (2.80,1.20) -- (2.80,3.20);
  \node[circle,fill=blue!60,draw=black,inner sep=2pt,minimum size=5pt] (000) at (0.0,0.0) {};
  \node[circle,fill=blue!60,draw=black,inner sep=2pt,minimum size=5pt] (100) at (2.0,0.0) {};
  \node[circle,fill=white,draw=black,inner sep=2pt,minimum size=5pt] (010) at (0.0,2.0) {};
  \node[circle,fill=white,draw=black,inner sep=2pt,minimum size=5pt] (110) at (2.0,2.0) {};
  \node[circle,fill=white,draw=black,inner sep=2pt,minimum size=5pt] (101) at (2.8,1.2) {};
  \node[circle,fill=white,draw=black,inner sep=2pt,minimum size=5pt] (011) at (0.8,3.2) {};
  \node[circle,fill=blue!60,draw=black,inner sep=2pt,minimum size=5pt] (111) at (2.8,3.2) {};
\end{tikzpicture}
\caption{$|X|=4$}
\end{subfigure}
\caption{Local configurations in the $2$--stratum dichotomy (Claim~\ref{clm:local_S2_dichotomy_R3}). Chambers $000$ and $100$ always belong to~$S$ (one sector of~$H_e$). The exceptional pattern~$X_{\mathrm{ex}}$ is the only case where $\tau(p)$ can be less than~$\frac12$.}
\label{fig:S2_dichotomy_cases}
\end{figure}

            In the last three patterns, one of the planes $H_e,H_a,H_b$ contains a neighboring pair of boundary
            faces, so Claim~\ref{claim:neighbor_pair_R3} gives $\tau(p)\ge \frac12$.

            In the remaining pattern
            \[
                X_{\mathrm{ex}}:=\{000,001,010,100\},
            \]
            the two boundary edges of the polygon $s$ through $p$ lie on the lines $H_e\cap H_a$ and $H_e\cap H_b$.
            Let $F_a\subset H_a$ and $F_b\subset H_b$ be the two boundary faces adjacent to these edges.
            Their interior angles at $p$ are both $\pi-r_s(p)$, so by Claim~\ref{claim:angle_formula_R3},
            \[
                \Pr[\low_v(F_a)=p]+\Pr[\low_v(F_b)=p]
                \ge
                2\frac{\pi-(\pi-r_s(p))}{2\pi}
                =
                \frac{r_s(p)}{\pi}.
            \]
            Hence $\tau(p)\ge r_s(p)/\pi$.
        \end{proof}

        If there exists $p\in V(s)$ with $\tau(p)\ge \frac12$, then \eqref{eq:sumtau_Vs_lb_R3} is immediate.
        Otherwise every vertex is in the exceptional case of Claim~\ref{clm:local_S2_dichotomy_R3}, and therefore
        \[
            \sum_{p\in V(s)}\tau(p)
            \ge
            \frac1\pi\sum_{p\in V(s)} r_s(p).
        \]
        Since $s$ is a convex polygon with $|V(s)|\ge 3$, the sum of its interior angles is
        $(|V(s)|-2)\pi$. Hence
        \[
            \sum_{p\in V(s)}\tau(p)\ge |V(s)|-2\ge 1,
        \]
        proving \eqref{eq:sumtau_Vs_lb_R3}.
    \end{proof}

    Claims~\ref{clm:S1_charge_R3} and~\ref{clm:S2_charge_R3} show that every non-full component of
    $\mathcal S_1\cup\mathcal S_2$ receives charge at least
    \[
        c_0:=\frac1{12}.
    \]

    It remains to control the full components.
    Let $L$ be the number of full components in $\mathcal S_1$ (whole lines $H_e\cap H_f$),
    and let $M$ be the number of full components in $\mathcal S_2$ (whole planes $H_e$).

    Since
    \[
        |S|=\sum_{i=0}^3\binom{k}{i},
    \]
    and the total number of chambers of the arrangement is
    \[
        |\mathcal R(\mathcal H)|=\sum_{i=0}^3\binom{n}{i},
        \qquad n:=|\mathcal H|,
    \]
    the density assumption implies that
    \begin{equation}
        k\le c_D\,n
        \qquad\text{for some constant }c_D=c_D(D)\in(0,1).
    \end{equation}
    After increasing $c_D$ slightly if necessary, we may also assume
    \begin{equation}\label{eq:kminus2_over_nminus2_R3}
        \frac{k-2}{n-2}\le c_D<1.
    \end{equation}

    \begin{claim}\label{claim:degenerate_forces_vertices_R3}
        There exists a constant $\lambda(D)\in(0,1)$ such that
        \[
            L+M\le \lambda(D)\sum_{i=0}^2\binom{k}{i}.
        \]
    \end{claim}

    \begin{proof}
        Write $\supp(S)^{(3)}:=\{A\in\supp(S):|A|=3\}$.
        Since $\supp(S)$ is a down-set and
        \[
            |\supp(S)|\le |S|=\sum_{i=0}^3\binom{k}{i}
        \]
        by Proposition~\ref{prop:num_vertices}, Theorem~\ref{thm:kruskal_katona}(i) with $m=3$
        yields
        \begin{equation}\label{eq:supp3_upper_R3}
            |\supp(S)^{(3)}|\le \binom{k}{3}.
        \end{equation}

        Count incidences $(\{e,f\},A)$ such that $H_e\cap H_f$ is a full line and
        $A\in \supp(S)^{(3)}$ satisfies $\{e,f\}\subset A$. If $H_e\cap H_f\subset T_S$, then for every
        $g\in[n]\setminus\{e,f\}$ the point $H_e\cap H_f\cap H_g$ lies in $T_S$, so
        $\{e,f,g\}\in \supp(S)^{(3)}$. Thus each full line contributes exactly $n-2$ incidences.
        On the other hand, each $A\in \supp(S)^{(3)}$ has exactly three $2$--subsets, so it is incident
        with at most three full lines. Hence
        \[
            (n-2)L\le 3|\supp(S)^{(3)}|.
        \]
        Combining this with \eqref{eq:supp3_upper_R3} gives
        \begin{equation}\label{eq:L_bound_final_R3}
            L\le \binom{k}{2}\frac{k-2}{n-2}.
        \end{equation}

        Likewise, count incidences between full planes and triple sets. Every full plane
        $H_e\subset T_S$ contributes $\binom{n-1}{2}$ incidences $(e,\{e,f,g\})$, while each $3$--set has
        exactly three elements. Thus
        \[
            \binom{n-1}{2}M\le 3|\supp(S)^{(3)}|,
        \]
        and \eqref{eq:supp3_upper_R3} implies
        \begin{equation}\label{eq:M_bound_final_R3}
            M
            \le
            \frac{k(k-1)(k-2)}{(n-1)(n-2)}
            \le
            k\,\frac{k-2}{n-2}.
        \end{equation}

        Using \eqref{eq:kminus2_over_nminus2_R3}, \eqref{eq:L_bound_final_R3}, and \eqref{eq:M_bound_final_R3}, we obtain
        \[
            L+M
            \le
            c_D\Bigl(\binom{k}{2}+k\Bigr)
            \le
            c_D\sum_{i=0}^2\binom{k}{i}.
        \]
        Thus the lemma holds with $\lambda(D):=c_D$.
    \end{proof}

    For $0\le i\le 3$, let
    \[
        N_i:=\sum_{\substack{A\in\supp(S)\\ |A|=i}}|\conn(T_S(A))|.
    \]
    Proposition~\ref{prop:num_vertices} gives
    \[
        |S|
        \le
        N_0+N_1+N_2+N_3.
    \]
    With hyperplanes in general position, each $3$--stratum is a single point, so
    \[
        N_3=|\supp(S)^{(3)}|.
    \]
    Using \eqref{eq:supp3_upper_R3} we get
    \[
        N_0+N_1+N_2
        \ge
        \sum_{i=0}^3\binom{k}{i}-\binom{k}{3}
        =
        \sum_{i=0}^2\binom{k}{i}.
    \]
    Since $S$ is proper, every connected component of $T_S$ has nonempty boundary, so we may choose a boundary
    face $F_U\in\mathcal F$ for each $U\in\conn(T_S)$. These faces are distinct because each boundary face
    belongs to a unique connected component of $T_S$. Therefore
    \[
        N_0\le |\mathcal F|=|\partial S|.
    \]

    Finally, since every non-full component of $\mathcal S_1\cup\mathcal S_2$ receives charge at least $c_0$,
    \eqref{eq:boundary_ge_sum_ch_R3} implies
    \[
        |\partial S|
        \ge
        c_0\bigl(N_1+N_2-L-M\bigr).
    \]
    Multiplying the inequality $N_0\le |\partial S|$ by $c_0$ and adding, we obtain
    \[
        (1+c_0)|\partial S|
        \ge
        c_0\bigl(N_0+N_1+N_2-L-M\bigr).
    \]
    Hence
    \[
        |\partial S|
        \ge
        \frac{c_0}{1+c_0}\bigl(N_0+N_1+N_2-L-M\bigr).
    \]
    Together with Claim~\ref{claim:degenerate_forces_vertices_R3} and the bound
    \[
        N_0+N_1+N_2\ge \sum_{i=0}^2\binom{k}{i},
    \]
    we obtain
    \[
        L+M
        \le
        \lambda(D)\sum_{i=0}^2\binom{k}{i}
        \le
        \lambda(D)\bigl(N_0+N_1+N_2\bigr),
    \]
    and therefore
    \[
        |\partial S|
        \ge
        \frac{c_0(1-\lambda(D))}{1+c_0}\bigl(N_0+N_1+N_2\bigr).
    \]
    Since $N_0+N_1+N_2\ge \sum_{i=0}^2\binom{k}{i}$, the second inequality in the statement follows as well.
    This proves the theorem with
    \[
        C(D):=\frac{c_0(1-\lambda(D))}{1+c_0}.
    \]
\end{proof}

We can now deduce the mixing time bound for the chamber graph of a hyperplane arrangement in $\R^3$
as an application of Theorem~\ref{thm:R3_asymp} and the general relation between isoperimetry and mixing times.

\begin{cor}\label{cor:R3_mixing}
    Let $\mathcal H$ be a hyperplane arrangement in general position in $\R^3$, let
    $n:=|\mathcal H|$
    be the number of hyperplanes, and let $G$ be the chamber graph of $\mathcal H$. Then the
    $\varepsilon$--mixing time of the lazy
    simple random walk on $G$ satisfies
    \[
        t_{\mathrm{mix}}(\varepsilon)
        \;=\;
        O\!\left(n^2\log\frac{n}{\varepsilon}\right).
    \]
\end{cor}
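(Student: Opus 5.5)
The plan is to bound the conductance of $G$ via Theorem~\ref{thm:R3_asymp}, and then convert it into a mixing-time bound with the standard Cheeger inequality for lazy reversible chains.

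Set $N:=|\mathcal R(\mathcal H)|=\sum_{i=0}^3\binom{n}{i}=\Theta(n^3)$. Every chamber is separated from a neighbor by each of its facets. Each chamber has at most $n$ facets and at least $3$ (for $n\ge 3$). Hence $G$ has degrees between $3$ and $n$, and $2|E|=\Theta(nN)$. Indeed, each hyperplane carries $\sum_{i=0}^2\binom{n-1}{i}$ facets, so $|E|=n\sum_{i\le2}\binom{n-1}{i}=\Theta(n^3)$, and the average degree is $\Theta(1)$.

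\textbf{Conductance.} The stationary distribution is $\pi(C)=\deg(C)/2|E|$, and the conductance is
\[
\Phi=\min_{\pi(S)\le 1/2}\frac{|\partial S|}{\mathrm{vol}(S)}.
\]
The condition $\pi(S)\le1/2$ does not directly give $|S|\le D N$, because the degrees vary. However, $\mathrm{vol}(S)\le |E|$ and every chamber has degree at least $3$. So $|S|\le |E|/3$. I expect that a direct count gives $|E|/3\le D N$ for some fixed $D<1$ and all large $n$, since $|E|\approx n^3/2$ and $N\approx n^3/6$. This is the main obstacle: the naive ratio is about $1$, not bounded away from it. The fallback is to use that most chambers have degree about $3$ and only few have large degree. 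I would first try to bound $\mathrm{vol}$ crudely by $\mathrm{vol}(S)\le n|S|$ and handle large $S$ via complements, as follows. If $|S|>N/2$, pass to $S^c$, which has the same edge boundary. If $\mathrm{vol}(S^c)$ is also large, then $\min(\mathrm{vol}(S),\mathrm{vol}(S^c))\ge$ const$\cdot|E|$, and bounded $D=1/2$ applies to whichever of $S,S^c$ has at most $N/2$ chambers. Thus in all cases it suffices to lower bound $|\partial S|$ for $|S|\le N/2$ against $\min(\mathrm{vol}(S),\mathrm{vol}(S^c))\le n|S|$.

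Now apply Theorem~\ref{thm:R3_asymp} with $D=1/2$. For $|S|\ge 8$, write $|S|=\sum_{i\le3}\binom{k}{i}$ with real $k\ge2$. (The theorem is stated for the integer form, but monotonicity and interpolation between consecutive $k$ cost only constants.) This gives $|\partial S|\ge c|S|^{2/3}$. Since $|S|\le N=O(n^3)$, we get $|S|^{2/3}\ge |S|/N^{1/3}\ge c'|S|/n$. Therefore
\[
\frac{|\partial S|}{\mathrm{vol}(S)}\ge \frac{c'|S|/n}{n|S|}=\Omega(n^{-2}).
\]
For $|S|<8$, Proposition~\ref{prop:small} (with $d=3$, covering $|S|\le4$) plus the trivial bound $|\partial S|\ge1$ by connectivity gives $\Phi$ at least $\Omega(1/n)$ on such sets. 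Hence $\Phi=\Omega(n^{-2})$. The crude factor $n$ in $\mathrm{vol}(S)\le n|S|$ loses exactly one power. If that proves too lossy, I would instead use $\mathrm{vol}(S)=O(|S|+|E(G[S])|)$ together with an average-degree argument, but the target $n^2$ appears to tolerate the crude bound only if $\Phi=\Omega(n^{-1})$. I would need to recheck this: Cheeger gives spectral gap $\gamma\ge\Phi^2/2$, which with $\Phi=\Omega(n^{-1})$ yields $t_{\mathrm{rel}}=O(n^2)$.

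So the real target is $\Phi=\Omega(1/n)$. To reach it, note $|\partial S|\ge c|S|^{2/3}\ge c|S|/n$, and compare with the volume via the average degree rather than the maximum. The edges internal to $S$ are facets. By Proposition~\ref{prop:num_vertices} and general position, $\mathrm{vol}(S)=\sum_{C\in S}\deg C$ equals twice the number of facets of chambers in $S$. This is at most $O(\sum_{A\in\supp(S),|A|\le 2}|\conn(T_S(A))|\cdot$ stuff$)$, plus $|\partial S|$. My plan is to bound $\mathrm{vol}(S)=O(|S|)$ using the facts that each facet lies in a $2$-stratum chamber of $H_e$ and that these number $O(|S|)$ by the flag count. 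Then $\Phi\ge c|S|^{2/3}/O(|S|)\ge c/n$, and combining with $\gamma\ge\Phi^2/2$ and $t_{\mathrm{mix}}(\varepsilon)\le \gamma^{-1}\log(1/(\varepsilon\pi_{\min}))$, where $\pi_{\min}\ge 1/(2|E|)=\Omega(n^{-3})$, gives $O(n^2\log(n/\varepsilon))$.
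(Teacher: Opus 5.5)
There is a genuine gap, and it sits exactly at the step that separates $O(n^2)$ from $O(n^4)$. As you noticed, the crude bound $\mathrm{vol}(S)\le n|S|$ only gives $\Phi=\Omega(n^{-2})$, hence $t_{\mathrm{mix}}=O(n^4\log(n/\varepsilon))$.

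Your repair rests on the assertion $\mathrm{vol}(S)=O(|S|)$ ``by the flag count'', which is false as stated. A single bounded chamber can have $\Theta(n)$ facets: take the cell containing the centre of a sphere to which $n$ generic planes are tangent. Then $|S|=1$ while $\mathrm{vol}(S)=\Theta(n)$. Also, $\mathrm{vol}(S)=2|E(G[S])|+|\partial S|$; it is not twice the number of facets of chambers in $S$.

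What is true, and what you need, is a bound on the \emph{internal} edges: $|E(G[S])|=O(|S|)$ or $O(|S|+|\partial S|)$. Then $\mathrm{vol}(S)=O(|S|+|\partial S|)$, and a case split finishes. If $|\partial S|$ dominates, $\Phi=\Omega(1)$; otherwise $\Phi\ge c\,|S|^{2/3}/|S|\ge c'/n$. You give no argument for this internal-edge bound. It needs more than the inequality $|\partial S|\ge c|S|^{2/3}$, and the average degree of $G$ does not help, since $S$ may consist of high-degree chambers.

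The paper supplies the missing step as follows. For each $e$, the facets in $H_e$ of edges of $G[U]$ form a chamber set of the induced arrangement, whose strata are the $T_U(A)$ with $e\in A$. Applying Proposition~\ref{prop:num_vertices} to these sets and summing over $e$ gives $|E(G[U])|\le N_1+2N_2+3N_3$. The stratum form of Theorem~\ref{thm:R3_asymp}, not just its $|S|^{2/3}$ consequence, gives $N_1+N_2=O(|\partial U|)$. Kruskal--Katona gives $N_3\le\binom{k}{3}=O\bigl((N_0+N_1+N_2)^{3/2}\bigr)=O(|\partial U|^{3/2})$. Hence $\mathrm{vol}(U)=O(|\partial U|^{3/2})$. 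Taking $U$ to be the smaller of $S,S^c$ and using $\mathrm{vol}(U)\le 2|E(G)|=O(n^3)$, one gets $\Phi(S)\ge \frac{c'}{2}\mathrm{vol}(U)^{-1/3}=\Omega(n^{-1})$.

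An alternative you could use is the Haussler--Littlestone--Warmuth density bound for one-inclusion graphs. Sign vectors of chambers in $\R^3$ have VC-dimension at most $3$, since four planes create only $15$ chambers, so $|E(G[S])|\le 3|S|$ directly. Either way, this ingredient must actually be proved or cited. The rest of your outline (complement reduction, small sets, Cheeger, $\pi_{\min}=\Omega(n^{-3})$) is fine.
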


\begin{proof}
    Let
    \[
        N:=|V(G)|=|\mathcal R(\mathcal H)|.
    \]
    Let $P$ be the transition matrix of the lazy simple random walk on $G$, and let
    $\pi$ be its stationary distribution. For $S\subseteq V(G)$ write
    \[
        \operatorname{vol}(S):=\sum_{v\in S}\deg(v),
    \]
    so that
    \[
        \pi(S)=\frac{\operatorname{vol}(S)}{2|E(G)|}.
    \]
    Applying Theorem~\ref{thm:R3_asymp} with $D=\frac12$, there exists a constant $c>0$
    such that every set $U\subseteq V(G)$ with $|U|\le N/2$ satisfies
    \[
        |\partial U|\ge c\bigl(N_0(U)+N_1(U)+N_2(U)\bigr),
    \]
    where
    \[
        N_i(U):=\sum_{\substack{A\in\supp(U)\\ |A|=i}}|\conn(T_U(A))|
        \qquad (0\le i\le 3).
    \]

    We claim that there exists a constant $c'>0$ such that every set $U\subseteq V(G)$ with $|U|\le N/2$
    satisfies
    \[
        |\partial U|\ge c'\,\operatorname{vol}(U)^{2/3}.
    \]
    Indeed, for each hyperplane $H_e\in\mathcal H$, let $U_e$ be the set of chambers of the arrangement
    induced on $H_e$ corresponding to edges of the induced subgraph $G[U]$ whose common facet lies in $H_e$.
    Then
    \[
        |U_e|
    \]
    is exactly the number of such edges, and under the natural identification of the induced arrangement on
    $H_e$ with the section of the original arrangement, its strata are precisely the sets
    $T_U(A\cup\{e\})$ with $A\subseteq [n]\setminus\{e\}$. Hence Proposition~\ref{prop:num_vertices} gives
    \[
        |U_e|
        \le
        \sum_{\substack{A\in\supp(U)\\ e\in A}}|\conn(T_U(A))|.
    \]
    Summing over all $e$ and using that every edge of $G[U]$ lies in a unique hyperplane, we obtain
    \[
        |E(G[U])|
        \le
        N_1(U)+2N_2(U)+3N_3(U).
    \]

    If $|U|\le 3$, then Proposition~\ref{prop:small} gives
    \[
        |\partial U|\ge |U|\ge 1.
    \]
    Since $|E(G[U])|\le \binom{|U|}{2}\le 3$, we have
    \[
        \operatorname{vol}(U)=2|E(G[U])|+|\partial U|\le 6+|\partial U|\le 7|\partial U|.
    \]
    Thus
    \[
        |\partial U|\ge \frac{1}{7}\operatorname{vol}(U)\ge \frac{1}{7}\operatorname{vol}(U)^{2/3}.
    \]
    So we may assume from now on that $|U|\ge 4$.

    Choose $k\ge 2$ such that
    \[
        |U|=\sum_{i=0}^3 \binom{k}{i}.
    \]
    As in Claim~\ref{claim:degenerate_forces_vertices_R3}, Theorem~\ref{thm:kruskal_katona}(i)
    gives
    \[
        N_3(U)\le \binom{k}{3}.
    \]
    Since Proposition~\ref{prop:num_vertices} gives
    \[
        |U|\le N_0(U)+N_1(U)+N_2(U)+N_3(U),
    \]
    it follows that
    \[
        N_0(U)+N_1(U)+N_2(U)\ge \sum_{i=0}^2\binom{k}{i}.
    \]
    Therefore
    \[
        N_3(U)=O\!\left(\bigl(N_0(U)+N_1(U)+N_2(U)\bigr)^{3/2}\right).
    \]
    Using the theorem, this yields
    \[
        N_1(U)+N_2(U)=O(|\partial U|)
        \qquad\text{and}\qquad
        N_3(U)=O\!\left(|\partial U|^{3/2}\right).
    \]
    Hence
    \[
        |E(G[U])|=O\!\left(|\partial U|^{3/2}\right),
    \]
    and so
    \[
        \operatorname{vol}(U)=2|E(G[U])|+|\partial U|=O\!\left(|\partial U|^{3/2}\right),
    \]
    which is equivalent to the claim.

    Let $S\subseteq V(G)$ satisfy $\pi(S)\le \frac12$, and let $U$ be whichever of
    $S$ and $V(G)\setminus S$ has smaller cardinality. Then $|U|\le N/2$ and
    $|\partial U|=|\partial S|$. If $U=S$, then $\operatorname{vol}(S)=\operatorname{vol}(U)$.
    Otherwise $\pi(S)\le \frac12$ implies
    \[
        \operatorname{vol}(S)\le \operatorname{vol}(V(G)\setminus S)=\operatorname{vol}(U).
    \]
    Therefore
    \[
        \Phi(S)
        :=
        \frac{\sum_{u\in S,\;v\notin S}\pi(u)P(u,v)}{\pi(S)}
        =
        \frac{|\partial S|}{2\,\operatorname{vol}(S)}
        \ge
        \frac{|\partial U|}{2\,\operatorname{vol}(U)}
        \ge
        \frac{c'}{2}\,\operatorname{vol}(U)^{-1/3}.
    \]
    Since every edge of $G$ corresponds to a facet of a chamber, and for each hyperplane
    $H\in\mathcal H$ the facets contained in $H$ are precisely the chambers of the induced
    arrangement on $H$, we have
    \[
        |E(G)| = n \sum_{i=0}^2 \binom{n-1}{i}.
    \]
    Since $\operatorname{vol}(U)\le 2|E(G)|$, we get
    \[
        \Phi(S)
        \ge
        \frac{c'}{2}\left(2n\sum_{i=0}^2 \binom{n-1}{i}\right)^{-1/3}
        =
        \Omega\!\left(n^{-1}\right).
    \]
    Hence the conductance profile of the chain satisfies
    \[
        \Phi_*:=\min_{\pi(S)\le 1/2}\Phi(S)=\Omega\!\left(n^{-1}\right).
    \]
    By the Cheeger inequality for lazy reversible Markov chains
    \cite{Chung1997,LawlerSokal1988},
    \[
        1-\lambda_2 \ge \frac{\Phi_*^2}{2}=\Omega\!\left(n^{-2}\right),
    \]
    where $\lambda_2$ is the second largest eigenvalue of $P$.
    Let
    \[
        \pi_{\min}:=\min_{v\in V(G)}\pi(v).
    \]
    Therefore
    \[
        \pi_{\min}\ge \frac{1}{2|E(G)|}
        =
        \frac{1}{2n\sum_{i=0}^2 \binom{n-1}{i}},
    \]
    because every chamber has degree at least $1$.
    The standard bound for lazy chains now gives
    \[
        t_{\mathrm{mix}}(\varepsilon)
        \le
        \frac{1}{1-\lambda_2}\log\frac{1}{\varepsilon\pi_{\min}}
        =
        O\!\left(n^2\log\frac{n}{\varepsilon}\right).
    \]
\end{proof}

For comparison, let $G=P_m^d$ be the $d$--dimensional grid and consider the lazy simple random walk on $G$.
Then
\[
    t_{\mathrm{mix}}(\varepsilon)=\Theta_d\!\left(m^2\log\frac{1}{\varepsilon}\right)
    =\Theta_d\!\left(|V(G)|^{2/d}\log\frac{1}{\varepsilon}\right).
\]
See, for example, \cite[Chapters~12 and~13]{LevinPeresWilmer2017}.
In particular, for $d=3$ this gives
\[
    t_{\mathrm{mix}}(\varepsilon)=\Theta\!\left(m^2\log\frac{1}{\varepsilon}\right).
\]
Since $P_m^3$ is the chamber graph of the arrangement consisting of $m-1$ parallel hyperplanes in
each coordinate direction, it corresponds to $n=3(m-1)$ hyperplanes. Thus Corollary~\ref{cor:R3_mixing}
has the same quadratic dependence on the number of hyperplanes, up to the additional logarithmic
factor $\log n$, as the mixing time of the three-dimensional grid.

\section{Concluding Remarks and Further Directions}

We close by indicating several directions suggested by the present work. The main open problem is to
prove Conjecture~\ref{conj:1}. Even establishing its asymptotic form would already be a major step:
for fixed $d$, one would like to show that every set $S$ of at most half of the chambers satisfies
\[
    |\partial S|=\Omega\!\left(|S|^{\frac{d-1}{d}}\right).
\]
The results of this paper support the view that convex sets are extremal, but at present this picture
is understood only in low dimensions.

A natural next step is to extend the argument of Section~4 from dimension $3$ to dimension $4$, and
more generally to obtain an asymptotic lower bound in every fixed dimension. The proof in $\R^3$
combines the stratification approach with geometric facts about planes, polygons, and local
three-dimensional configurations. At present we do not have a method that organizes the corresponding
higher-dimensional local analysis in a way that works uniformly for all dimensions. More ambitiously,
one would like to understand isoperimetry in arbitrary dimension without the assumption of general
position.

Another direction is to generalize the results to affine oriented matroids. The arguments of Section~3
are largely combinatorial and topological, and should extend naturally to that setting. By contrast,
the three-dimensional result uses specific geometric properties of hyperplane arrangements in Euclidean
space, so extending the low-dimensional asymptotic bound to affine oriented matroids would require new
ideas.

Finally, it would be interesting to study the mixing time of random walks on chamber graphs directly,
without first passing through isoperimetric inequalities. A direct probabilistic or spectral approach
might lead to sharper bounds, and could help clarify whether the order predicted by the examples coming
from grid graphs is the correct one more generally.

\section*{Acknowledgments}

The author used LLM assistance (ChatGPT 5.4 and Claude 4.6) in writing and editing the paper. The results
and proofs are entirely the author's own work, the LLMs were used to improve the exposition and to check for errors.
The exceptions are the lemmas in the Appendix where the proofs were constructed through prompting, directing
and verifying LLMs.
The author takes full responsibility for all the content in the paper.
The research was supported by research program P1-0297.

\section*{Appendix}

\begin{lem}\label{prop:binomial_inequality}
    Fix integers $0<d<n$. For $1\le b\le d$ and real $k\in[d-1,n-d]$ define
    \[
        A(b,k):=\sum_{i=0}^{d-b}\binom{n-b}{i}+\sum_{i=d-b+1}^{d}\binom{k}{i},\qquad
        B(b,k):=\sum_{i=0}^{d-b} b\binom{n-b}{i}+\sum_{i=d-b+1}^{d-1}(d-i)\binom{k}{i}.
    \]
    Fix a value $A$ and, for each $b$, choose a real $k_b\in[d-1,n-d]$ such that
    $A(b,k_b)=A$.  Then
    \[
        B(d,k_d)\ =\ \min_{1\le b\le d} B(b,k_b).
    \]
\end{lem}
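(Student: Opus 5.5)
The plan is to rewrite both weighted sums as sums of partial sums of level sizes. The weights are nonincreasing in $i$, so Abel summation applies, and the comparison then reduces to level-by-level inequalities between ``profiles'' with the same total. For fixed $b$, set $a^{(b)}_i:=\binom{n-b}{i}$ for $i\le d-b$ and $a^{(b)}_i:=\binom{k_b}{i}$ for $d-b<i\le d$. Put $w^{(b)}_i:=\min(b,d-i)$ for $i\le d-1$ and $w^{(b)}_d:=0$. Then $A(b,k_b)=\sum_i a^{(b)}_i=A$ and $B(b,k_b)=\sum_i w^{(b)}_i a^{(b)}_i$. Writing $P^{(b)}_j:=\sum_{i\le j}a^{(b)}_i$, the increments $w^{(b)}_j-w^{(b)}_{j+1}$ are $0$ for $j<d-b$ and $1$ for $d-b\le j\le d-1$. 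Hence
\[
B(b,k_b)=\sum_{j=d-b}^{d-1}P^{(b)}_j=\sum_{j=d-b}^{d-1}\Bigl(A-\sum_{i>j}a^{(b)}_i\Bigr),
\qquad
B(d,k_d)=\sum_{j=0}^{d-1}\Bigl(A-\sum_{i>j}\binom{k_d}{i}\Bigr).
\]
So the statement is a comparison of upper tails of two level profiles with equal total mass $A$.

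Rather than comparing $b$ with $d$ directly, I would prove the monotonicity $B(b+1,k_{b+1})\le B(b,k_b)$ for $1\le b<d$. I would first record two preliminary facts.
\begin{enumerate}[(1)]
\item $k_{b+1}\ge k_b$. To see this, use Pascal's rule $\binom{n-b}{i}=\binom{n-b-1}{i}+\binom{n-b-1}{i-1}$ and $k\le n-d$. These give $A(b,k)\ge A(b+1,k)$ for every admissible $k$, because the level $i=d-b$ is counted as $\binom{n-b}{d-b}$ in $A(b,\cdot)$ but only as $\binom{k}{d-b}$ in $A(b+1,\cdot)$. Each $A(b,\cdot)$ is increasing in $k$ on $[d-1,n-d]$, so $k_{b+1}\ge k_b$.
\item Since the totals agree and $k_{b+1}\ge k_b$, the top levels of profile $b+1$ dominate those of profile $b$. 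Concretely, $\sum_{i>j}a^{(b+1)}_i\ge\sum_{i>j}a^{(b)}_i$ for every $j\ge d-b$. This is because for $i>d-b$ both profiles equal $\binom{k}{i}$ with the larger $k$ on the $(b+1)$ side.
\end{enumerate}
Consequently every common term $j\in\{d-b,\dots,d-1\}$ of the two sums for $B(b+1)$ and $B(b)$ is at least as small for $b+1$ as for $b$.

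The remaining discrepancy is the one additional term $j=d-b-1$ that appears in $B(b+1,k_{b+1})$, namely $P^{(b+1)}_{d-b-1}=\sum_{i\le d-b-1}\binom{n-b-1}{i}$. This must be paid for by the total gain from step (2), namely $\sum_{j\ge d-b}\bigl(\sum_{i>j}a^{(b+1)}_i-\sum_{i>j}a^{(b)}_i\bigr)$. Controlling this is the step I expect to be the main obstacle.

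My first attempt would be to use the equal-total constraint at level $d-b$. It shows that the mass moved from level $d-b$ (which drops from $\binom{n-b}{d-b}$ to $\binom{k_{b+1}}{d-b}$) into levels $>d-b$ is exactly $\binom{n-b}{d-b}-\binom{k_{b+1}}{d-b}$, plus the Pascal loss $\sum_{i\le d-b}\binom{n-b-1}{i-1}$ on the lower levels. That moved mass lands at levels $\ge d-b+1$, so it is counted at least once in the tail sums for $j=d-b$. It therefore contributes a gain of at least the moved mass. I would then verify that this gain is at least $P^{(b+1)}_{d-b-1}$. The hope is that this follows from $\sum_{i\le d-b}\binom{n-b-1}{i-1}=P^{(b+1)}_{d-b-1}$, which is exactly the Pascal loss, together with nonnegativity of the level-$(d-b)$ drop since $k_{b+1}\le n-d\le n-b$.

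If the bookkeeping closes, chaining $b=1,\dots,d-1$ gives $B(d,k_d)\le B(b,k_b)$ for all $b$, which is the claimed minimum. If it fails for real $k$ near the endpoint $n-d$, I would fall back to verifying the single-step inequality as a polynomial inequality in $k_{b+1}$ on $[d-1,n-d]$, using convexity of $k\mapsto\binom{k}{i}$ there.
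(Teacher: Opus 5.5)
Your plan is the paper's proof written in Abel-summation form. The paper also proves $B(b,k_b)\ge B(b+1,k_{b+1})$ one step at a time. Write $r=d-b$ and $D_i=\binom{k_{b+1}}{i}-\binom{k_b}{i}$. Then your total gain over the common terms is exactly the paper's $\sum_{i=r+1}^{d}(i-r)D_i$, and your extra term $P^{(b+1)}_{r-1}$ is its $\sum_{j=0}^{r-1}\binom{n-b-1}{j}$. Your step ``counted at least once at $j=d-b$'' is the paper's use of $i-r\ge 1$. Steps (1) and (2) are fine.

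The step you flagged does not close as written, because your bookkeeping counts one term twice. The Pascal loss on the levels strictly below $d-b$ is $\sum_{i\le d-b-1}\binom{n-b-1}{i-1}$, not $\sum_{i\le d-b}\binom{n-b-1}{i-1}$. The summand $\binom{n-b-1}{d-b-1}$ is already part of the level-$(d-b)$ drop $\binom{n-b}{d-b}-\binom{k_{b+1}}{d-b}$. The correct moved mass is
\[
\sum_{i>d-b}D_i=\sum_{j=0}^{d-b}\binom{n-b-1}{j}-\binom{k_{b+1}}{d-b}
=P^{(b+1)}_{d-b-1}+\Bigl(\binom{n-b-1}{d-b}-\binom{k_{b+1}}{d-b}\Bigr),
\]
which is precisely the constraint the paper derives from $A(b,k_b)=A(b+1,k_{b+1})$.

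So what you actually need is $\binom{n-b-1}{d-b}\ge\binom{k_{b+1}}{d-b}$, i.e.\ $k_{b+1}\le n-b-1$. The bound $k_{b+1}\le n-b$ that you invoke is too weak. The sharper bound holds because $k_{b+1}\le n-d$ and $b\le d-1$, and it is genuinely needed. For $d=2$, $b=1$ one computes $B(1,k_1)-B(2,k_2)=n-2-k_2$, which would be negative if $k_2$ were allowed up to $n-b=n-1$.

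With this correction your argument closes exactly as the paper's does:
\[
B(b,k_b)-B(b+1,k_{b+1})\ge\binom{n-b-1}{d-b}-\binom{k_{b+1}}{d-b}\ge 0 .
\]
No convexity fallback is required.
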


\begin{proof}
    It suffices to show that for every $1\le b\le d-1$,
    \begin{equation}\label{eq:step}
        B(b,\, k_b)\;\ge\; B(b+1,\, k_{b+1}),
    \end{equation}
    whenever $A(b,k_b)=A(b+1,k_{b+1})$ with $k_b,k_{b+1}\in[d-1,n-d]$.
    Iterating~\eqref{eq:step} for $b=1,2,\ldots,d-1$ gives
    $B(1,k_1)\ge B(2,k_2)\ge\cdots\ge B(d,k_d)$.

    Fix $1\le b\le d-1$ and write
    \[
        r:=d-b\ge 1,\qquad m:=n-b,\qquad k:=k_b,\qquad k':=k_{b+1}.
    \]
    Then
    \begin{align*}
        A(b,k)    & =\sum_{i=0}^{r}\binom{m}{i}+\sum_{i=r+1}^{d}\binom{k}{i},    \\
        A(b+1,k') & =\sum_{i=0}^{r-1}\binom{m-1}{i}+\sum_{i=r}^{d}\binom{k'}{i}.
    \end{align*}
    By Pascal's identity $\binom{m}{i}=\binom{m-1}{i}+\binom{m-1}{i-1}$,
    \[
        \sum_{i=0}^{r}\binom{m}{i}
        =\sum_{i=0}^{r}\binom{m-1}{i}+\sum_{j=0}^{r-1}\binom{m-1}{j}.
    \]
    Hence the relation $A(b,k)=A(b+1,k')$ is equivalent to
    \begin{equation}\label{eq:constraint}
        \binom{k'}{r}+\sum_{i=r+1}^{d}D_i
        \;=\;\sum_{j=0}^{r}\binom{m-1}{j},
    \end{equation}
    where $D_i:=\binom{k'}{i}-\binom{k}{i}$.

    We next show that $k'>k$. Define
    \[
        f_{b+1}(t):=\sum_{i=r}^{d}\binom{t}{i},
        \qquad
        f_b(t):=\sum_{i=r+1}^{d}\binom{t}{i}.
    \]
    Since $f_{b+1}(t)=f_b(t)+\binom{t}{r}$, equation~\eqref{eq:constraint} can be rewritten as
    \[
        f_{b+1}(k')-f_{b+1}(k)
        =\sum_{j=0}^{r}\binom{m-1}{j}-\binom{k}{r}.
    \]
    Since $k\le n-d\le n-b-1=m-1$ and the function $\binom{\cdot}{r}$
    is increasing for $t\ge r-1$, and hence in particular for $t\ge d-1$,
    we have $\binom{k}{r}\le\binom{m-1}{r}$, hence
    \[
        f_{b+1}(k')-f_{b+1}(k)
        \;\ge\;\sum_{j=0}^{r-1}\binom{m-1}{j}
        \;\ge\;1
        \;>\;0.
    \]
    Each summand $\binom{t}{i}$ of $f_{b+1}$ is strictly increasing in $t$
    for $t\ge d-1$ (since $d-1\ge i-1$ for every $i\le d$),
    so $f_{b+1}$ itself is strictly increasing for $t\ge d-1$.
    Therefore $k'>k$.
    Since each $\binom{\cdot\,}{i}$ is increasing for $t\ge d-1$ and $k'>k\ge d-1$,
    we conclude $D_i=\binom{k'}{i}-\binom{k}{i}>0$ for $1\le i\le d$.

    Now let $\Delta B:=B(b,k)-B(b+1,k')$. Since $d-r=b$, we have
    \begin{align*}
        \Delta B
         & =b\sum_{i=0}^{r}\binom{m}{i}+\sum_{i=r+1}^{d-1}(d-i)\binom{k}{i}
        -(b+1)\sum_{i=0}^{r-1}\binom{m-1}{i}                                \\
         & \quad-b\binom{k'}{r}
        -\sum_{i=r+1}^{d-1}(d-i)\binom{k'}{i}.
    \end{align*}
    Expanding the first sum by Pascal's identity yields
    \begin{align*}
        \Delta B
         & =b\sum_{i=0}^{r}\binom{m-1}{i}+b\sum_{j=0}^{r-1}\binom{m-1}{j}
        -(b+1)\sum_{i=0}^{r-1}\binom{m-1}{i}                              \\
         & \quad-b\binom{k'}{r}
        +\sum_{i=r+1}^{d-1}(d-i)\bigl(\binom{k}{i}-\binom{k'}{i}\bigr)    \\
         & =b\binom{m-1}{r}+(b-1)\sum_{i=0}^{r-1}\binom{m-1}{i}
        -b\,\binom{k'}{r}
        +\sum_{i=r+1}^{d-1}(d-i)\bigl(\binom{k}{i}-\binom{k'}{i}\bigr).
    \end{align*}
    Substituting
    $\binom{k'}{r}=\sum_{j=0}^{r}\binom{m-1}{j}-\sum_{i=r+1}^{d}D_i$
    from~\eqref{eq:constraint}, the terms involving $\binom{m-1}{\cdot}$ simplify to
    \[
        b\binom{m-1}{r}+(b-1)\sum_{i=0}^{r-1}\binom{m-1}{i}
        -b\sum_{j=0}^{r}\binom{m-1}{j}
        =-\sum_{i=0}^{r-1}\binom{m-1}{i}.
    \]
    For the $D_i$-terms, using $b-(d-i)=i-r$ for $r+1\le i\le d-1$
    and the $i=d$ term contributing $bD_d=(d-r)D_d$:
    \[
        b\sum_{i=r+1}^{d}D_i-\sum_{i=r+1}^{d-1}(d-i)D_i
        =\sum_{i=r+1}^{d}(i-r)D_i.
    \]
    Combining:
    \begin{equation}
        \Delta B
        \;=\;\sum_{i=r+1}^{d}(i-r)\,D_i \;-\;\sum_{j=0}^{r-1}\binom{m-1}{j}.
    \end{equation}

    Finally, since $D_i\ge 0$ and $i-r\ge 1$ for every $i\ge r+1$,
    \[
        \sum_{i=r+1}^{d}(i-r)\,D_i
        \;\ge\;\sum_{i=r+1}^{d}D_i
        \;\stackrel{\eqref{eq:constraint}}{=}\;
        \sum_{j=0}^{r}\binom{m-1}{j}-\binom{k'}{r}.
    \]
    Therefore
    \begin{equation}\label{eq:lowerbound}
        \Delta B
        \;\ge\;\binom{m-1}{r}-\binom{k'}{r}.
    \end{equation}
    Since $b\le d-1$, we have $m-1=n-b-1\ge n-d\ge k'$, and
    $\binom{\cdot\,}{r}$ is increasing for $t\ge d-1$
    (note $m-1\ge n-d\ge d-1$ and $k'\ge d-1$), so
    \[
        \binom{m-1}{r}=\binom{n-b-1}{d-b}\;\ge\;\binom{k'}{d-b}=\binom{k'}{r}.
    \]
    Together with~\eqref{eq:lowerbound} this gives
    $\Delta B\ge 0$, completing the proof of~\eqref{eq:step}.
\end{proof}

\begin{lem}\label{lem:half_chambers_threshold}
    Fix integers $0<d<n$ with $n\ge \frac{d^2}{\ln 2}+2d$.  Then
    \[
        \sum_{i=0}^{d}\binom{n-d}{i}\;\ge\;\frac{1}{2}\sum_{i=0}^{d}\binom{n}{i}.
    \]
\end{lem}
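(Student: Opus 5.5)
The plan is to compare the two sums term by term: show that for every $0\le i\le d$,
\[
    \binom{n-d}{i}\;\ge\;\frac12\binom{n}{i},
\]
and then sum over $i$. The hypothesis $n\ge \frac{d^2}{\ln 2}+2d$ gives in particular $n-2d>0$, so all factors below are positive. The $i=0$ term is trivial.

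For $1\le i\le d$, write the ratio as a product:
\[
    \frac{\binom{n-d}{i}}{\binom{n}{i}}
    =\prod_{j=0}^{i-1}\frac{n-d-j}{n-j}
    =\prod_{j=0}^{i-1}\Bigl(1-\frac{d}{n-j}\Bigr).
\]
Each factor is at least $1-\frac{d}{n-d}$, because $j\le i-1\le d-1<d$ gives $n-j> n-d$. There are at most $d$ factors, each lying in $(0,1]$. Hence the ratio is at least $\bigl(1-\frac{d}{n-d}\bigr)^d$.

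Next, set $x:=\frac{d}{n-d}\in(0,1)$. Use the elementary inequality $\ln(1-x)\ge -\frac{x}{1-x}$ for $0\le x<1$; it follows from $\ln(1-x)=-\sum_{k\ge 1} x^k/k\ge -\sum_{k\ge1}x^k$. Since $\frac{x}{1-x}=\frac{d}{n-2d}$, this yields
\[
    \Bigl(1-\frac{d}{n-d}\Bigr)^d\;\ge\;\exp\!\Bigl(-\frac{d^2}{n-2d}\Bigr).
\]
The threshold $n-2d\ge d^2/\ln 2$ is exactly what makes the exponent at least $-\ln 2$, so the ratio is at least $\frac12$. Multiplying through by $\binom{n}{i}$ and summing over $0\le i\le d$ gives the lemma.

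The only delicate point is choosing the exponential bound sharply enough to match the stated constant. The cruder Bernoulli bound $(1-x)^d\ge 1-dx$ would instead require $n\ge 3d$-type conditions that do not align with $d^2/\ln 2$. The inequality $\ln(1-x)\ge -x/(1-x)$ reproduces the threshold exactly, so I expect no further obstacle.
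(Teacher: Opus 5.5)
Your proof is correct and is essentially the paper's argument: both prove the term-by-term bound $\binom{n}{i}\le 2\binom{n-d}{i}$ from the product formula and a logarithmic estimate. Your inequality $\ln(1-x)\ge -x/(1-x)$ is the paper's $\ln(1+y)\le y$ after the substitution $1-x=1/(1+y)$. The paper bounds each factor separately and obtains the slightly sharper exponent $d^2/(n-2d+1)$. Your single uniform lower bound $1-\frac{d}{n-d}$ on every factor gives $d^2/(n-2d)$, which the hypothesis still covers exactly.

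One small correction to your closing aside: the Bernoulli route would need $n\ge 2d^2+d$, not an $n\ge 3d$-type condition. This does not affect the proof.
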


\begin{proof}
    For $0\le i\le d$, set
    \[
        R_i:=\frac{\binom{n}{i}}{\binom{n-d}{i}}
        =\prod_{j=0}^{i-1}\frac{n-j}{n-d-j}.
    \]
    Since
    \[
        \frac{n-j}{n-d-j}=1+\frac{d}{n-d-j},
    \]
    each factor is increasing in~$j$, and therefore $R_i\le R_d$ for all $0\le i\le d$.
    Consequently,
    \[
        \sum_{i=0}^{d}\binom{n}{i}
        =\sum_{i=0}^{d}R_i\binom{n-d}{i}
        \le R_d\sum_{i=0}^{d}\binom{n-d}{i}.
    \]
    It remains to bound $R_d$. Using $\ln(1+x)\le x$ and the estimate
    $n-d-j\ge n-2d+1$ for $0\le j\le d-1$, we obtain
    \[
        \ln R_d
        =\sum_{j=0}^{d-1}\ln\!\Bigl(1+\frac{d}{n-d-j}\Bigr)
        \;\le\;\sum_{j=0}^{d-1}\frac{d}{n-d-j}
        \;\le\;\frac{d^{2}}{n-2d+1}.
    \]
    Hence $R_d\le e^{d^{2}/(n-2d+1)}$.
    Since $n\ge \frac{d^{2}}{\ln 2}+2d$ implies $n-2d+1\ge\frac{d^{2}}{\ln 2}+1>\frac{d^{2}}{\ln 2}$,
    we obtain
    \[
        \frac{d^{2}}{n-2d+1}\;\le\;\ln 2,
    \]
    so $R_d\le 2$. Therefore
    \[
        \sum_{i=0}^{d}\binom{n}{i}\le 2\sum_{i=0}^{d}\binom{n-d}{i},
    \]
    which is equivalent to the claimed inequality.
\end{proof}

\end{document}